\newcommand{\BbR}{\mathbb{R}}
\newcommand{\BbQ}{\mathbb{Q}}
\newcommand{\BbC}{\mathbb{C}}
\newcommand{\BbN}{\mathbb{N}}
\newcommand{\la}{\lambda}
\newcommand{\ka}{\kappa}
\newcommand{\be}{\beta}
\newcommand{\e}{\mathrm{e}}
\newcommand{\Z}{{\mathcal{Z}}}
\newcommand{\U}{{\mathcal{U}}}
\newcommand{\wpi}{\widetilde\pi}
\renewcommand{\emptyset}{\varnothing}
\newtheorem{lemma}{Lemma}[section]
\newtheorem{prop}[lemma]{Proposition}
\newtheorem{thm}[lemma]{Theorem}
\newtheorem{cor}[lemma]{Corollary}
\theoremstyle{definition}
\newtheorem{example}[lemma]{Example}
\theoremstyle{remark}
\newtheorem{rmk}[lemma]{Remark}
\newtheorem{remark}[lemma]{Remark}
\numberwithin{equation}{section} \numberwithin{table}{section}
\begin{document}

\title[Two-dimensional self-affine sets]
{Two-dimensional self-affine sets with interior points, and the set of uniqueness}

\author{Kevin G. Hare}
\address{Department of Pure Mathematics \\
University of Waterloo \\
Waterloo, Ontario \\
Canada N2L 3G1}
\email{kghare@uwaterloo.ca}
\thanks{Research of K. G. Hare was supported by NSERC Grant RGPIN-2014-03154}
\thanks{Computational support provided in part by the Canadian Foundation for Innovation,
    and the Ontario Research Fund.}
\author{Nikita Sidorov}
\address{School of Mathematics \\
The University of Manchester \\
Oxford Road, Manchester M13 9PL\\
 United Kingdom.}
 \email{sidorov@manchester.ac.uk}
\date{\today}

\keywords{Iterated function system, self-affine set, set of uniqueness}

\subjclass[2010]{Primary 28A80; Secondary 11A67.}

\begin{abstract}
Let $M$ be a $2\times2$ real matrix with both eigenvalues less than~1
in modulus. Consider two self-affine contraction maps from $\mathbb R^2 \to \mathbb R^2$,
\begin{equation*}
T_m(v) = M v - u \ \ \mathrm{and}\ \ T_p(v) = M v + u,
\end{equation*}
where $u\neq0$.
We are interested in the properties of the attractor of the iterated function system (IFS) generated by
    $T_m$ and $T_p$, i.e., the unique non-empty compact set $A$ such that $A = T_m(A) \cup T_p(A)$.
Our two main results are as follows:
\begin{itemize}
\item If both eigenvalues of $M$ are between $2^{-1/4}\approx 0.8409$ and $1$ in
    absolute value, and the IFS is non-degenerate, then $A$ has non-empty interior.
\item For almost all non-degenerate IFS, the set of points which have a unique address
    is of positive Hausdorff dimension -- with the exceptional cases fully described as well.
\end{itemize}
This paper continues our work begun in \cite{HS}.
\end{abstract}

\maketitle

\section{Introduction}
\label{sec:intro}

Consider two self-affine linear contraction maps $T_m, T_p: \BbR^2 \to \BbR^2$:
\begin{equation}
\label{eq:main}
T_m(v) =  M v - u \ \ \mathrm{and}\ \ T_p(v) = M v + u,
\end{equation}
where $M$ is a $2\times2$ real matrix with both eigenvalues less than~1
in modulus and $u\neq0$. 
Here ``$m$'' is for ``minus'' and ``$p$'' is for ``plus''.
We are interested in the iterated function system (IFS) generated by
    $T_m$ and $T_p$.
Then, as is well known, there exists a unique non-empty compact set $A$ such that $A = T_m(A) \cup T_p(A)$.

The properties that we are interested in (non-empty interior of $A$ and the set
of uniqueness) do not change
    if we consider a conjugate of $T_m$ and $T_p$.
That is, if we consider $g \circ T_i \circ g^{-1}$ instead of the $T_i$
    where $g$ is any invertible linear map from $\BbR^2 \to \BbR^2$.
As such, we can assume that $M$ is a $2 \times 2$ matrix in one of
    three forms:
\[
\begin{pmatrix}         \la & 0  \\ 0 & \mu \end{pmatrix},
\begin{pmatrix}         \nu & 1  \\ 0 & \nu \end{pmatrix},
\begin{pmatrix}         a & b  \\ -b & a \end{pmatrix}.
\]

We will call the first of these {\em the real case}, the second {\em the Jordan block case},
and the last {\em the complex case}.

We will say that the  IFS is {\em degenerate} if it is restricted to a one-dimensional subspace
    of $\BbR^2$.
This will occur if any of the eigenvalues are $0$.
It will also occur if $\la = \mu$ in the real case, or (equivalently) if $b = 0$ in the
complex case. If our IFS is non-degenerate, then $u$ can be chosen to be
a cyclic vector for $M$, i.e., such that the span of $\{M^n u \mid n\ge0\}$
is all of $\mathbb R^2$ (which we will assume henceforth).

In \cite{HS} the authors studied the real case with  $\la, \mu > 0$.
Properties of the complex case have been studied extensively since the seminal
paper \cite{BH} - see, e.g., \cite{Cal} and references therein. Note that most authors
concentrate on the connectedness locus, i.e., pairs $(a,b)$ such that the attractor $A$ is connected.

In the present paper we study all three of the above cases, allowing us to
    make general claims. Our main result is

\begin{thm}\label{thm:interior}
If all eigenvalues of $M$ are between $2^{-1/4} \approx 0.8409$ and $1$ in
    absolute value, and the IFS is non-degenerate, then the attractor of the IFS has non-empty interior.
More precisely,
\begin{itemize}
\item If $0.832 < \la < \mu < 1$ then $A_{\la, \mu}$, the attractor for the (positive) real case,
    has non-empty interior \cite[Corollary~1.3]{HS}.
\item If $2^{-1/2} \approx 0.707 < \la < \mu < 1$ then $A_{-\la, \mu}$, the
    attractor for the (mixed) real case, has non-empty interior.
\item If $0.832 < \nu  < 1$ then $A_{\nu}$, the attractor for the Jordan block case,
    has non-empty interior.
\item If $2^{-1/4} \approx 0.841 < |\kappa|  < 1$ with
    $\kappa = a+ b i \not\in \BbR$ then $A_{\ka}$, the attractor for the complex case,
    has non-empty interior.
\end{itemize}
\end{thm}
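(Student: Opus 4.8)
The plan is to treat each of the four cases separately and, in each, to produce an explicit ball contained in the attractor $A$. The standard mechanism for showing a self-affine attractor has interior is to find a finite collection of words $w_1,\dots,w_N$ in $\{m,p\}^*$ such that the images $T_{w_1}(B),\dots,T_{w_N}(B)$ of a suitable ball $B$ cover $B$ itself (or cover a slightly larger set that then forces $B\subset A$). Since $A=\bigcup_{i}T_i(A)$, iterating gives $A=\bigcup_{|w|=k}T_w(A)$ for every $k$, and if $B\subseteq\bigcup_j T_{w_j}(B)$ with all $T_{w_j}$ strictly contracting, a Baire-category / fixed-point argument shows $B\subseteq A$. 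So the real content is: \emph{for the stated eigenvalue ranges, such a covering system of words exists.}

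First I would reduce to a normalized picture. Using the conjugacy invariance already noted in the introduction, I may assume $M$ is in one of the three normal forms and, rescaling, that $u$ is a fixed convenient cyclic vector. The first bullet is simply quoted from \cite[Corollary~1.3]{HS}, so nothing is needed there. For the remaining three cases the geometry is governed by how fast $M$ contracts in its two (possibly generalized) eigendirections, i.e.\ by $|\lambda|,|\mu|$ (mixed real), $\nu$ (Jordan), $|\kappa|$ (complex). The key quantitative point is that when the product of the two contraction ratios exceeds $1/2$ — which is exactly what $2^{-1/4}<|\text{eigenvalue}|<1$ guarantees in the complex and Jordan cases, and what $2^{-1/2}<\lambda<\mu<1$ guarantees in the mixed case — the two first-level pieces $T_m(A),T_p(A)$ are ``large enough'' relative to their separation $2u$ that one can engineer overlaps. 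Concretely I would look at the second- or third-generation cylinder sets $T_{w}(Q)$ for $w$ of length $2$ or $3$ acting on a parallelogram $Q$ adapted to the eigenbasis, compute their positions (the translation parts are $\sum \pm M^j u$, an explicitly summable geometric-type series), and verify that finitely many of them tile a neighborhood of a point. In the complex case, writing $\kappa=|\kappa|\mathrm{e}^{i\theta}$, the cylinder $T_w(0)$ sits at $\sum_{j}\varepsilon_j\kappa^j u$ with $\varepsilon_j=\pm1$; the set of such partial sums, for $|\kappa|$ close to $1$, is known to fill a region (this is the classical ``digit'' picture behind \cite{BH}), and one extracts a finite sub-configuration that covers a ball. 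The Jordan and mixed-real cases are handled by the same template, the only change being the explicit form of $M^j$ (polynomial-times-$\nu^j$ for the Jordan block, diagonal $\mathrm{diag}((-\lambda)^j,\mu^j)$ for the mixed case).

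The main obstacle — and where the specific constants $2^{-1/4}$ and $2^{-1/2}$ are forced — is the \emph{verification} that the chosen finite family of cylinders actually covers a ball, rather than merely overlapping pairwise. This is a finite but delicate computation: one must pick the right word length (longer words give more, smaller tiles but also push their centres into a smaller region), the right shape of the test parallelogram $Q$ (it should be roughly invariant, i.e.\ close to an eigenvector-aligned box, so that $T_w(Q)$ stays a controlled shape), and then check a covering condition that amounts to a system of inequalities in the eigenvalue parameters. I expect the argument to proceed by: (1) fixing $Q$ as an $M$-adapted parallelogram; (2) computing $T_m^k(Q)$ and the ``comb'' of translates $T_w(Q)$, $|w|=k$, as $k\to$ a small explicit value; (3) showing the union of a well-chosen sub-family contains the centre point in its interior, with the threshold on the contraction ratio being exactly the break-even point where consecutive tiles stop leaving gaps. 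A mild extra subtlety in the mixed real case is the sign alternation: $M^j u$ flips orientation in the first coordinate with $j$, so the comb of translates is less ``monotone'' than in \cite{HS}, which is why one only needs $\lambda>2^{-1/2}$ there (the alternation helps rather than hurts). Once the covering inequality is checked in each of the three cases, the Baire-category step gives $B\subset A$ and hence $\mathrm{int}(A)\neq\emptyset$, completing the proof.
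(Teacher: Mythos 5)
Your proposal is a strategy outline rather than a proof: all of the quantitative content of the theorem --- why the thresholds $2^{-1/2}$, $0.832$ and $2^{-1/4}$ suffice --- is deferred to a covering verification that you describe (``I expect the argument to proceed by\dots'', ``Once the covering inequality is checked\dots'') but never carry out. Moreover, the mechanism you propose, namely finding a ball $B$ and finitely many words $w_j$ with $B\subseteq\bigcup_j T_{w_j}(B)$, is a strictly stronger requirement than $\mathrm{int}(A)\neq\emptyset$, and nothing in your sketch shows it is achievable at the stated parameter values; near the critical thresholds the cylinder pieces barely overlap, and exhibiting a finite sub-self-covering of a ball is precisely the hard step, not a routine finite computation. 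In particular the claim that the thresholds are ``exactly the break-even point where consecutive tiles stop leaving gaps'' is asserted, not proved, and it is not where those constants actually come from.

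The paper's route is different and supplies exactly what your sketch omits. For the mixed real and Jordan block cases it proves a simultaneous-expansion theorem (Theorem~\ref{thm:tool2}, following G\"unt\"urk and Dajani--Jiang--Kempton): choose a polynomial $P(x)=x^n+b_{n-1}x^{n-1}+\dots+b_0$ vanishing to the appropriate orders at the reciprocals of the eigenvalues, with $\sum_j|b_j|\le2$ and a nonsingular $N\times N$ submatrix of the associated matrix $B$; a greedy choice of digits $a_j\in\{\pm1\}$ then represents every point in a neighbourhood of the origin. The explicit choices $P(x)=x^2+\bigl(\frac1\la-\frac1\mu\bigr)x-\frac1{\la\mu}$ and $P(x)=x^8-\frac8{7\nu}x^7+\frac1{7\nu^8}$ are what produce the constants $1/\sqrt2$ and $0.832$; your proposal contains no substitute for this construction. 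For the complex case the paper explicitly observes that this machinery does not apply and argues instead via the Minkowski-sum identity $A_\ka=A_{\ka^2}+\ka A_{\ka^2}$, the Barnsley--Harrington connectedness criterion ($|\ka^2|\ge1/\sqrt2$, which is where $2^{-1/4}$ comes from), and Kleptsyn's theorem that the sum of two paths, not both parallel lines, has non-empty interior. Thus the constant in the complex case is forced by connectedness, not by any tiling or covering estimate, and your plan to ``extract a finite sub-configuration that covers a ball'' from the digit picture is exactly the unestablished step. The gap is therefore genuine: the covering inequalities on which your argument rests are never verified, and there is no indication they hold at these thresholds.
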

The remaining three cases are shown in Section~\ref{sec:interior}.
The last case relies upon an argument of V. Kleptsyn \cite{MO}.

Some non-explicit results are known in the complex case.
Let $\kappa = a + b i$ and consider the attractor $A'_\ka$
    satisfying $A'_\ka = \ka A'_\ka \cup (\ka A'_\ka + 1)$ which is clearly similar
    to $A_\ka$.

\begin{thm}[Z.~Dar\'oczy and I.~K\'atai \cite{DK88}]
Let $\ka \not\in \BbR$ be sufficiently close to $1$ in absolute value.
Then $A'_{\ka}\supset \{z : |z| \le 1\}$.
\end{thm}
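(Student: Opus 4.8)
The plan is to pass to a rescaled copy of $A'_\kappa$ which, as $\kappa\to 1$, converges to a genuinely two‑dimensional convex body having $0$ in its interior; since $0\in A'_\kappa$ always and the rescaling factor blows up, this will force $A'_\kappa$ to swallow a disk about $0$ of radius tending to infinity, in particular the closed unit disk. Write $\eta:=1-\kappa$ and note that $A'_\kappa=\bigl\{\sum_{n\ge 0}\epsilon_n\kappa^n:\epsilon_n\in\{0,1\}\bigr\}$ is the attractor of $f_0(z)=\kappa z$, $f_1(z)=\kappa z+1$, with $0=f_0(0)\in A'_\kappa$; moreover $A'_\kappa=\eta^{-1}B_\kappa$ where $B_\kappa:=\eta A'_\kappa=\bigl\{\sum_{n\ge 0}\epsilon_n\eta\kappa^n\bigr\}$ is the attractor of $S_0(z)=\kappa z$, $S_1(z)=\kappa z+\eta$, whose fixed points are $0$ and $1$. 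It therefore suffices to produce a closed disk $\bar D(0,\rho)\subseteq B_\kappa$ with $\rho\ge|\eta|$, since then $A'_\kappa=\eta^{-1}B_\kappa\supseteq\bar D(0,\rho/|\eta|)\supseteq\{z:|z|\le 1\}$.

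The next step is to identify the limit shape of $B_\kappa$. Put $\phi:=\arg\eta=\arg(1-\kappa)$; since $|\kappa|<1$ we have $\mathrm{Re}\,\eta>0$, so $\phi\in(-\pi/2,\pi/2)$. Using $\eta\kappa^n=\eta\,\e^{-\eta n}\bigl(1+O(n|\eta|^2+|\eta|)\bigr)$ and $\sum_{n\ge 0}|\eta\kappa^n|=O(1)$, a Riemann‑sum estimate shows that $B_\kappa$ converges, in the Hausdorff metric and at rate $O(|\eta|)$ uniformly for $\phi$ in compact subsets of $(-\pi/2,\pi/2)$, to
\[
L_\phi:=\Bigl\{\,\textstyle\int_E\e^{-s}\,ds:\ E\subseteq\{t\e^{i\phi}:t\ge 0\}\ \text{measurable}\,\Bigr\},
\]
the range of the nonatomic, finite $\BbC$‑valued measure $\mu_\phi(E)=\int_E\e^{-s}\,ds$ along the ray of argument $\phi$. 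By Lyapunov's convexity theorem $L_\phi$ is a compact convex subset of $\BbC\cong\BbR^2$; it contains $0$ (take $E=\emptyset$) and $1=\mu_\phi(\text{whole ray})$.

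The heart of the matter is that $0$ is an \emph{interior} point of $L_\phi$, and this is exactly where $\kappa\notin\BbR$ is used: it forces $\phi\ne 0$, hence $\sin\phi\ne 0$. For any $\psi\in\BbR$ the function $t\mapsto\mathrm{Re}\,(\e^{-i\psi}\e^{-t\e^{i\phi}})=\e^{-t\cos\phi}\cos(\psi+t\sin\phi)$ changes sign infinitely often on $[0,\infty)$, so taking $E$ to be the set where it is positive, respectively negative, shows $L_\phi$ meets both open half‑planes bounded by an arbitrary line through $0$; hence $0\in\mathrm{int}\,L_\phi$. Fix $\rho_0(\phi)>0$ with $\bar D(0,2\rho_0(\phi))\subseteq L_\phi$. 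For $\arg(1-\kappa)$ confined to a fixed compact subset of $(-\pi/2,\pi/2)\setminus\{0\}$ and $\kappa$ close enough to $1$, the Hausdorff estimate yields $\bar D(0,\rho_0(\phi))\subseteq B_\kappa$, and since $|\eta|=|1-\kappa|\to 0$ while $\rho_0(\phi)$ is bounded below on that compact set, we get $\rho_0(\phi)\ge|\eta|$, which by the first step finishes the argument in that range.

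The main obstacle is the \emph{uniformity} of this as $\kappa\to 1$: $L_\phi$ degenerates to the real segment $[0,1]$ as $\phi\to 0$, its inradius $\rho_0(\phi)\to 0$, and in fact $B_\kappa$ spreads two‑dimensionally near $0$ only at scale $\asymp|\eta|\,\e^{-c/|\phi|}$, so the conclusion genuinely fails unless $|1-\kappa|$ is small compared with that quantity; thus ``$\kappa$ sufficiently close to $1$'' must be read as forcing $\arg(1-\kappa)$ to stay where $|1-\kappa|\le\rho_0(\arg(1-\kappa))$. Pinning down the optimal such region, and handling the complementary regime $|\phi|$ near $\pi/2$ (where the Hausdorff estimate must be redone, though there $B_\kappa$ is so large that containing the unit disk is easy), is the remaining work. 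An essentially equivalent, more self‑contained way to organise the same estimate is to bypass the limit: choose $N=N(\kappa)$ with $|\kappa|^N\asymp|\eta|$ and show the $2^N$ partial sums $\sum_{n<N}\epsilon_n\eta\kappa^n$ form a $\bigl(|\kappa|^N\cdot 2c|\eta|\bigr)$‑net of $\bar D(0,2c|\eta|)$ for a suitable constant $c$, so that $\bar D(0,2c|\eta|)$ is covered by its images under the length‑$N$ compositions of $S_0,S_1$ and hence lies in $B_\kappa$ by maximality of the attractor --- the net estimate being, once more, a two‑dimensional‑spreading statement about the partial sums that relies on $\kappa\notin\BbR$.
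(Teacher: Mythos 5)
First, a point of reference: the paper does not prove this statement at all --- it is quoted from Dar\'oczy and K\'atai \cite{DK88}, and the remark that follows it pins down the intended meaning: for each \emph{fixed} nonreal argument $\theta=\arg\ka$ there is $\delta(\theta)$ such that the closed unit disc is contained in $A'_\ka$ whenever $|\ka|>1-\delta(\theta)$ (the working condition being $|\ka|>1-C|\arg\ka|$). Your argument addresses a different regime: you take $\ka\to1$ as a point of $\BbC$, i.e.\ $\eta=1-\ka\to0$. For a fixed nonreal $\theta$ bounded away from $0$, letting $|\ka|\to1$ gives $\eta\to1-e^{i\theta}\neq0$, so the rescaling $B_\ka=\eta A'_\ka$, the approximation $\eta\ka^n\approx\eta e^{-\eta n}$, and the limit shape $L_\phi$ are simply unavailable; such $\ka$ never enter your region. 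Even for small fixed $\theta$, the relevant limit $|\ka|\to1$ forces $\phi=\arg(1-\ka)\to\theta/2-\pi/2$, i.e.\ exactly the near-$\pm\pi/2$ regime you defer, and with $|\eta|\to2\sin(\theta/2)$ bounded away from $0$, so it is not covered by the ``$|\eta|$ small, $\phi$ in a compact subset of $(-\pi/2,\pi/2)\setminus\{0\}$'' case you actually argue. So even if completed, your argument would prove containment only for $\ka$ in a pinched region near the point $1$, not the theorem as stated and as clarified in the paper.

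Second, within its own regime the argument has two genuine gaps. (i) Hausdorff closeness of $B_\ka$ to $L_\phi$ does not yield $\bar D(0,\rho_0(\phi))\subseteq B_\ka$: a set can be Hausdorff-close to a disc and still be a finite net, and whether $A'_\ka$ contains any disc is precisely the question. To convert density into containment you need the covering step you only sketch at the very end (a compact $K$ with $K\subseteq\bigcup_{|w|=N}S_w(K)$ lies in the attractor), and its quantitative input --- that the $2^N$ partial sums $\sum_{n<N}\epsilon_n\eta\ka^n$ form a net of mesh $\lesssim|\ka|^N|\eta|$ in a disc of radius $\asymp|\eta|$ about $0$ --- is exactly the hard two-dimensional spreading estimate; you assert it ``relies on $\ka\notin\BbR$'' but do not prove it. (ii) You explicitly defer ``pinning down the optimal region'' and the case $\phi$ near $\pm\pi/2$ as remaining work, with only the heuristic that there ``$B_\ka$ is so large that containing the unit disc is easy''; but largeness of diameter says nothing about absence of holes, and, as noted above, that deferred case is where the actual content of the Dar\'oczy--K\'atai theorem lives. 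As it stands the proposal is a reasonable strategy for $\ka$ near $1$ with $\arg(1-\ka)$ bounded away from $0$ and $\pm\pi/2$, but it is not a proof of the stated theorem.
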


\begin{rmk}
Since $A'_\ka$ tends to a segment in $\mathbb R$ as the imaginary part of $\ka$ tends
to 0 in the Hausdorff metric (with any fixed real part),
it is clear that there cannot be an absolute bound in a result like this.
In fact, a detailed analysis of the proof indicates that the actual condition the authors
use is $|\ka|>1-C|\arg(\ka)|$ with some absolute constant $C>0$. That is,
``sufficiently close to 1'' means ``for any $\theta\in(0,\pi)\cup(\pi,2\pi)$ there exists $\delta$ such that
$A_\ka$ contains the closed unit disc for all $\ka$ with $\arg(\ka)=\theta$ and $|\ka|>1-\delta$''
\footnote{In \cite{KL2007} V.~Komornik and P.~Loreti obtained a similar result for
the condition $A'_{\ka}\supset \{z : |z| \le R\}$
for an arbitrary $R>1$.}. Theorem~\ref{thm:interior} overcomes this obstacle.
\end{rmk}

Given the two maps $T_m$ and $T_p$, there is a natural projection map from the set of all
    $\{m,p\}$ sequences to points on $A$.
We define $\pi:\{m,p\}^\BbN \to A$ by
    $\pi(a_0 a_1 a_2 \dots) = \lim_{n\to\infty}T_{a_0} \circ T_{a_1} \circ\dots\circ T_{a_n}(0,0)$.
Note that because both $T_m$ and $T_p$ are contraction maps, this yields a well defined
    point in $A$. We call $a_0a_1\dots$ an {\em address} for $(x,y)\in A$ if $\pi(a_0a_1\dots)=(x,y)$.
We say that a point $(x,y) \in A$ is a {\em point of uniqueness} if it has a unique address.

The question on when this IFS has a large number of points of uniqueness
    depends somewhat on the nature of the eigenvalues.
If $M$ has two complex eigenvalues, $\kappa$ and $\overline\kappa$ where
    $\mathrm{arg}(\kappa)/\pi \in \BbQ$ then it is possible
    for the IFS to have a small number of points of uniqueness
    (see Theorem~\ref{thm:uniq-rational}).
With the exception of this case, all other IFS will have a continuum of
    points of uniqueness.

Our second result is
\begin{thm}\label{thm:uniq}
For all non-degenerate IFS not explicitly mentioned in
Theorem~\ref{thm:uniq-rational}, the set of points of uniqueness
    is uncountable, and with positive Hausdorff dimension.
\end{thm}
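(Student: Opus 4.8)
The plan is to exhibit, inside the attractor, a subset on which the address map $\pi$ is injective and whose preimage under $\pi$ already carries positive Hausdorff dimension; this forces the set of points of uniqueness to be uncountable and of positive dimension (recall that $\pi$ is Lipschitz with respect to a suitable metric on the symbol space, since $M$ is a contraction, so dimension is not lost when pushing forward). The natural candidates for such a subset are the ``extreme'' boundary points of $A$: because the IFS has only two maps differing by the sign of the translation, a point will typically have a unique address when it lies on a supporting line of $A$ in a direction that distinguishes $T_m(A)$ from $T_p(A)$. Concretely, I would first reduce to one of the three normal forms listed in the introduction (the real, Jordan-block, and complex cases), using conjugation-invariance of the property of having a unique address, and with $u$ a cyclic vector for $M$.

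The core of the argument is a \emph{separation/overlap dichotomy}. For a linear functional $\phi$ on $\BbR^2$, consider the images $\phi(T_m(A))$ and $\phi(T_p(A))$, two compact intervals obtained by the affine maps $t \mapsto \phi(Mv) \mp \phi(u)$ applied to $\phi(A)$. If there is a choice of $\phi$ (equivalently, a direction) for which these two intervals are \emph{disjoint}, then every address is determined by its first symbol, hence by iteration every point of $A$ has a unique address and we are done trivially; so the interesting regime is when they always overlap. In that regime I would still extract, for a well-chosen direction $\phi$, the one endpoint of $\phi(A)$ that is attained only from \emph{one} side — say the maximum of $\phi$ over $A$ is attained in $T_p(A)$ but not in $T_m(A)$. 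Then the set $U_\phi$ of points of $A$ on which $\phi$ is maximised forces the first symbol to be $p$; applying $T_p^{-1}$ and iterating, one sees that such points correspond to symbolic sequences that are forced coordinate-by-coordinate along a sequence of directions $\phi, (M^{\mathsf T})\phi, (M^{\mathsf T})^2\phi,\dots$. The key structural input is that, because $u$ is cyclic, the orbit of the direction $\phi$ under $M^{\mathsf T}$ does not collapse into a single line; when $M$ has complex eigenvalues this orbit rotates, and \emph{if the rotation angle is irrational} (i.e., $\arg\kappa/\pi \notin \BbQ$) the directions are dense, so infinitely many symbols get pinned down, but with enough freedom left over at the ``non-extreme'' scales to build a Cantor set of genuinely distinct forced sequences. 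The rational-angle case is exactly where this mechanism can fail, which is why it is excluded (and handled separately in Theorem \ref{thm:uniq-rational}).

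In more detail, the steps I would carry out are: (1) Fix a direction and compute the two interval-images under $T_m, T_p$; identify when an endpoint of $\phi(A)$ is ``one-sided''. This reduces to understanding $\max_{v\in A}\phi(v)$ and which of the two sub-pieces realises it, a finite linear-algebra computation governed by the sign of $\phi(u)$ and by the eigenstructure of $M^{\mathsf T}$. (2) Show that for a positive-measure (indeed full-measure up to the excluded cases) set of IFS parameters there is a direction and a nontrivial \emph{arc} of directions near it for which the one-sidedness persists — this is where the complex case with irrational angle is used, via equidistribution of $\{(\arg\kappa)\,n \bmod \pi\}$, and where the real and Jordan cases are handled by a direct monotonicity argument on the eigen-directions. (3) Assemble a self-similar (or self-affine) Cantor subset $\mathcal{C}$ of the symbol space, all of whose elements are forced addresses, by interleaving ``forced'' blocks (coming from the one-sided extremal directions at various scales) with ``free'' blocks where a genuine binary choice survives; estimate from below the Hausdorff dimension of $\pi(\mathcal{C})$ using a mass-distribution / Falconer-type lower bound, the free blocks guaranteeing the dimension is strictly positive. (4) Verify that every point of $\pi(\mathcal{C})$ indeed has a \emph{unique} address: a competing address would have to differ in a forced coordinate, contradicting the one-sidedness established in step (1)–(2).

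The main obstacle I expect is step (2) together with the uniqueness verification in step (4): ensuring that the ``one-sidedness'' of an extremal direction is robust enough to survive under the $M^{\mathsf T}$-iteration \emph{and} simultaneously leaves enough binary freedom to produce positive dimension rather than merely a single point or a countable set. When $\phi(A)$ has an endpoint that is attained from both pieces (the genuinely overlapping case), the forcing argument stalls, and one must instead work at a deeper level of the cylinder structure; controlling this requires quantitative estimates on the size and position of $\phi(A)$ relative to $\pm\phi(u)$, i.e., on the diameter of the attractor in each direction. This is also precisely where the boundary between the generic case and the exceptional family in Theorem \ref{thm:uniq-rational} must be drawn sharply, so getting the case analysis exhaustive — real with $\la=\mu$ excluded by non-degeneracy, complex with rational angle excluded, everything else covered — is the delicate bookkeeping that makes the theorem true as stated.
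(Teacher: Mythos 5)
Your overall route is the paper's: points on supporting lines of $A$ (vertices of the convex hull in the real and Jordan cases, extremal points in a direction $\phi$ in the complex case) are points of uniqueness, the extremal addresses in the irrational complex case are hitting sequences of an irrational rotation and hence recurrent, and one then manufactures a positive-entropy family of unique addresses. The genuine gap is in your steps (3)--(4), which are the heart of the matter. Your verification that a competing address ``would have to differ in a forced coordinate'' is not an argument: it could differ first at one of your ``free'' positions, and nothing in your construction excludes that. The paper does not interleave free symbols at all; it fixes explicit words $u,v,w$ (in the mixed real case $u=mp^L$, $v=p^{k_1}$, $w=p^{k_2}$; in the irrational complex case $u=b_1\dots b_L$ with two continuations $v=b_{L+1}c_1\dots c_m$ and $w=\overline{b_{L+1}}d_1\dots d_n$, whose existence uses recurrence \emph{and} non-periodicity of the extremal sequence) and verifies the cylinder-disjointness conditions of Lemma~\ref{thm:unique tool} for every suffix of $uv$ and $uw$ followed by a return to $u$; these quantitative separations (from compactness of $\pi(\mathcal{E})$ plus a tail bound of the form $|\ka|^{K+1}/(1-|\ka|)<d$) are precisely what pin down the symbol at the ``choice'' positions as well, and positive dimension then follows from positive topological entropy of the unambiguous language $\{uv,uw\}^*$ rather than a mass-distribution estimate. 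Until you supply an analogue of those conditions, your Cantor set may contain points with several addresses. Relatedly, the theorem concerns \emph{all} non-excluded parameters, so ``a positive-measure set of IFS parameters'' in your step (2) must be upgraded: unique ergodicity of every irrational rotation gives recurrence for every irrational-angle $\ka$, and the real and Jordan cases are settled by convex-hull computations valid for all $0<\la<\mu<1$ and all $0<\nu<1$.

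Second, your case bookkeeping fails at the one place where it is delicate. The mixed real case with eigenvalues $-\la$ and $\la$ of equal modulus is \emph{not} excluded by non-degeneracy (degeneracy means a repeated eigenvalue or a zero one), and your monotonicity mechanism, which needs $\la<\mu$, stalls there. The paper treats $M=\mathrm{diag}(-\la,\la)$ separately (subsection~\ref{sub:equal}) and shows its set of uniqueness is governed by the one-dimensional set $U_\be$ with $\be=\la^{-2}$; in particular it is finite or countable when $\be\le\be_*$, so this family behaves like the excluded rational-angle complex case of Theorem~\ref{thm:uniq-rational}, not like the generic one. Asserting ``everything else covered'' would therefore claim something false for those parameters; a complete argument must isolate this family explicitly alongside the rational complex case.
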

Again, the real case where $\la>0, \mu > 0$ has been shown in \cite{HS}.
We prove the remaining cases in Section~\ref{sec:unique}.

\begin{example}\label{ex:rauzy}

\begin{figure}
\includegraphics[width=350pt]{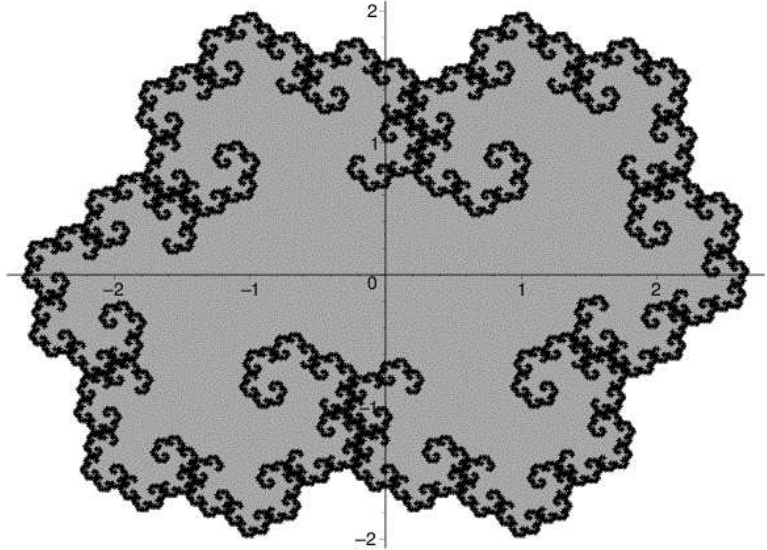}
\caption{Points of uniqueness for the Rauzy fractal}
\label{fig:Dragon}
\end{figure}


As an example, consider the famous Rauzy fractal introduced in \cite{Rauzy}. Let $\ka$ be
 one of the complex roots of $z^3-z^2-z-1$, i.e., $\ka\approx -0.419 + 0.606i$.
Consider the attractor $A_\ka$ satisfying $A_\ka = (\ka A_\ka -1) \cup (\ka A_\ka + 1)$. It follows
from the results of \cite{Mess} that the unique addresses in this case are precisely those
which do not contain three consecutive identical symbols.

It is easy to show by induction that the number of $m$-$p$ words of length~$n$ with such a property which
start with $m$ is the $n$th Fibonacci number. Consequently, the set of unique addresses has
topological entropy equal to $\log\tau$, where $\tau=\frac{1+\sqrt5}2$. Hence the Hausdorff dimension
of the set of uniqueness is $-\frac{\log{\tau}}{\log{|\kappa|}} \approx 1.579354467$.

See Figure~\ref{fig:Dragon} for the attractor (grey) and points of uniqueness (black).
It is interesting to note that since the Hausdorff dimension of the boundary here is approximately
$1.093$ (see \cite{IK}), ``most'' points of uniqueness of the Rauzy fractal are interior points, whereas
our general construction only uses boundary points - see Section~\ref{sec:unique}.
\end{example}

\begin{example}\label{ex:twindragon}

\begin{figure}
\includegraphics[width=300pt]{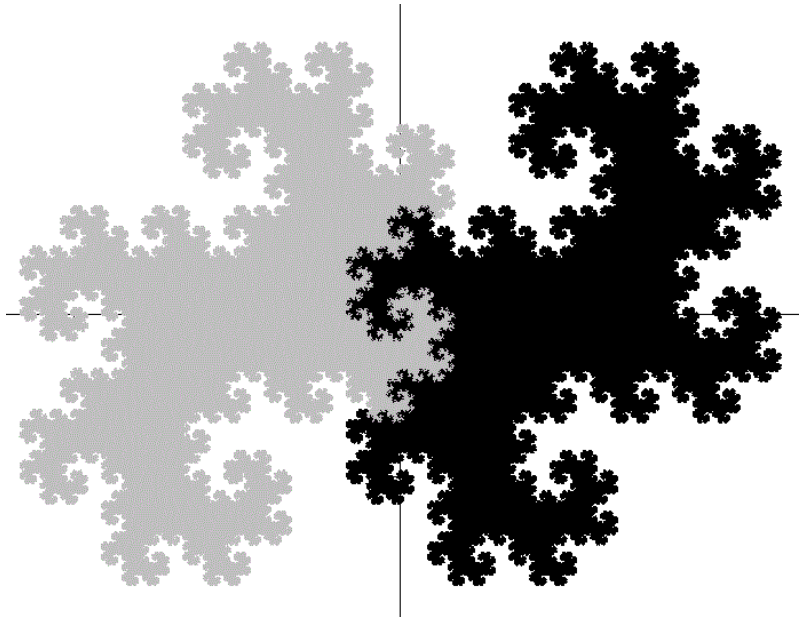}
\caption{The twin dragon curve $A_{\frac{1+i}2}$}
\label{fig:twindragon}
\end{figure}

Another famous complex fractal is the twin dragon curve which in our notation is
$A_\ka$ with $\ka=\frac{1+i}2$ -- see Figure~\ref{fig:twindragon}. The grey half
corresponds to all points in $A_\ka$ whose address begins with $m$ and the black half --
with $p$. Their intersection is a part of the boundary of either half, which has the
same Hausdorff dimension as the boundary of $A_\ka$, approximately $1.524$ (see, e.g.,
\cite{dragon}).

Clearly, if a point in $A_\ka$ has a non-unique address $a_0a_1\dots$, then
$\pi(a_na_{n+1}\dots)$ must lie in the aforementioned intersection for some $n$.
This means that the complement of the set of uniqueness in this case has dimension
$\approx1.524$; on the other hand, it is well known that $A_\ka$ has non-empty
interior (see, e.g. \cite{Gilbert} and references therein). Consequently, a.e.
point of $A_\ka$ has a unique address.
\end{example}

\section{Notation}
\label{sec:notation}

For the real case we will consider two subcases.
Let $0 < \la \leq \mu < 1$ and consider
    \[ M = \begin{pmatrix}         \la & 0  \\ 0 & \mu \end{pmatrix}. \]
This we will call the {\em positive real case}.
This was the case considered in \cite{HS}.
The second subcase is
\[
M = \begin{pmatrix}-\la & 0  \\ 0 & \mu \end{pmatrix},
\]
which we will call the {\em mixed real case}.
In both cases we take $u = \begin{pmatrix} 1 \\ 1 \end{pmatrix}$, which is
clearly cyclic.

In the real positive case, $\pi: \{m,p\}^\BbN \to A_{\la, \mu}$, we have
$\pi(a_0 a_1 a_2 \dots) = \left(\sum_{i=0}^\infty a_i \la^i, \sum_{i=0}^\infty a_i \mu^i\right) \in \BbR^2$,
    whereas in the real mixed case, $\pi: \{0,1\}^\BbN \to A_{-\la, \mu}$, we have
\[
\pi(a_0 a_1 a_2 \dots) = \left( \sum_{i=0}^\infty a_i (-\la)^i, \sum_{i=0}^\infty a_i \mu^i\right).
\]

It is easy to see that all other real cases can be reduced to one of these two.
For example, there is a symmetry from $(-\la, \mu)$ to $(\la, -\mu)$.
To see this, write $(x,y) = \left(\sum a_i (-\la)^i, \sum a_i \mu^i\right) \in A_{-\la, \mu}$.
Taking $a'_i = (-1)^i a_i \in \{\pm1\}$ we see that
    $(x,y) = \left(\sum a'_i \la^i, \sum a'_i (-\mu)\right) \in A_{\la, -\mu}$.

For the Jordan block case we will assume that $0 < \nu < 1$.
In this case we take $u = \begin{pmatrix} 0 \\ 1 \end{pmatrix}$, which is, again,
clearly a cyclic vector. We have
\[
\pi(a_0 a_1 a_2 \dots) = \left(\sum_{i=0}^\infty i a_i \nu^{i-1},\sum_{i=0}^\infty a_i\nu^i\right)
\]
(see Lemma~\ref{lem:jordan-exp} below).
There is a symmetry to the $\nu < 0$ case such that $A_{\nu}$ and
    $A_{-\nu}$ share all of the desired properties.
To see this, write $(x,y) = (\sum i a_i \nu^{i-1} \sum a_i \nu^i) \in A_{-\nu}$.
Taking $a'_i = (-1)^i a_i \in \{m, p\}$, we see that
    $(-x,y) = (\sum a'_i i (-\nu)^i, \sum a'_i (-\nu)) \in A_{-\nu}$.
Hence $A_\nu$ and $A_{-\nu}$ are reflections of each other across the $y$-axis.

For the complex case, we let $\ka = a+bi$ and consider
    $v = \begin{pmatrix} x\\ y \end{pmatrix}$ as $z = x+ yi$.
We see that the maps in \eqref{eq:main} with $u = \begin{pmatrix}1 \\ 0 \end{pmatrix}$,
      are equivalent to the maps in $\BbC$, namely,
\[
T_m (z) = \ka z - 1 \ \ \mathrm{or}\ \ T_p(z) = \ka z + 1.
\]
In the complex case we have $\pi(a_0 a_1 a_2 \dots) = \sum_{j=0}^\infty a_j \kappa^j \in \BbC$, i.e.,
the attractor $A_\ka$ is the set of expansions in complex base $\ka$ with ``digits'' 0 and 1.
Note that if $\ka \in \BbR$ then the resulting IFS is real (and degenerate).

Throughout we will refer to $[i_1\dots i_k]$ as the {\em cylinder}
of all $(a_i)_0^\infty\in\{m,p\}^{\mathbb N}$
such that $a_j = i_j$ for $j = 1, \dots, k$.
We note that this is a compact subset of $\{m,p\}^\BbN$ under the usual product topology.

\section{Attractors with interior}
\label{sec:interior}

The first question that we are interested in is, when does $A$ have interior.
For the real and Jordan block case we look at a related, albeit somewhat easier, question:
when is $(0,0)$ contained in the interior of $A$?
We will say that $(-\la, \mu)$ for the mixed real case is in $\Z_\BbR$
    if $(0,0) \in \mathrm{int}(A_{-\la, \mu})$.
An equivalent definition is given for $\Z_J$ for the Jordan block case.

In fact, the real case (both mixed and positive) and the Jordan block case are both special cases
    of a more general result -- see Theorem~\ref{thm:tool2} below.

Consider a contraction matrix $M$ with all real eigenvalues such that any
    duplicate eigenvalue is within the same Jordan block.
That is, let $J_{\la, k}$ be the $k \times k$ Jordan block
\[ J_{\la, k} = \begin{pmatrix}
                         \la & 1   &        &       & 0 \\
                             & \la & \ddots &       & \\
                             &     & \ddots & \ddots&   \\
                             &     &        & \la   & 1  \\
                          0  &     &        &       & \la \end{pmatrix} \]
and write $M$ as
\begin{equation}
 M = \begin{pmatrix}
    J_{\la_1, k_1} &                &        &  0 \\
                   & J_{\la_2, k_2} &        & \\
                   &                & \ddots &  \\
        0          &                &        & J_{\la_r, k_r} \end{pmatrix},
\label{eq:M}
\end{equation}
where all $\lambda_i$ are distinct and $0 < |\lambda_i| < 1$ for all $i$.
Then $M$ will have dimensions $N \times N$ where $N = k_1 + k_2 + \dots + k_r$.

We consider the two affine maps
\[
T_m(v) = M v -
 \begin{pmatrix} 0 \\ \vdots \\ 0 \\ 1 \\ \vdots \\ \vdots  \\ 0 \\ \vdots \\ 0 \\ 1 \end{pmatrix}
 \ \ \mathrm{and} \ \
T_p(v) = M v +
 \begin{pmatrix} 0 \\ \vdots \\ 0 \\ 1 \\ \vdots \\ \vdots  \\ 0 \\ \vdots \\ 0 \\ 1 \end{pmatrix}.
\]
Here there are $k_1-1$ copies of $0$s follows by one $1$,
     then  $k_2-1$ copies of $0$s follows by one $1$, and so on.

Consider the case with $M$ as a single $k \times k$ Jordan block $J_{\la,k}$.
\begin{lemma}\label{lem:jordan-exp}
We have
    \[ \pi(a_0 a_1 a_2 \dots) =
        \begin{pmatrix}
        \frac{1}{(k-1)!}\frac{d^{k-1}}{d \la^{k-1}} \sum_{i=0}^\infty a_i \la^i \\
        \frac{1}{(k-2)!}\frac{d^{k-2}}{d \la^{k-2}} \sum_{i=0}^\infty a_i \la^i \\
        \vdots \\
        \frac{d}{d \la} \sum_{i=0}^\infty a_i \la^i \\
        \sum_{i=0}^\infty a_i \la^i
        \end{pmatrix}.
    \]
\end{lemma}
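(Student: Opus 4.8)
The plan is to compute $\pi(a_0a_1\dots)$ directly from its definition as $\lim_{n\to\infty} T_{a_0}\circ\cdots\circ T_{a_n}(0)$, using the explicit form $T_{a_i}(v) = Mv + a_i u$ with $u = (0,\dots,0,1)^{\mathsf T}$ and $M = J_{\la,k}$. A routine induction on the composition shows
\[
T_{a_0}\circ T_{a_1}\circ\cdots\circ T_{a_n}(0) = \sum_{i=0}^n a_i\, M^i u,
\]
so that $\pi(a_0a_1\dots) = \sum_{i=0}^\infty a_i\, M^i u$, the series converging because $M$ is a contraction (spectral radius $|\la|<1$). Thus everything reduces to identifying the vector $M^i u$, i.e.\ the last column of $M^i$.

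The key step is therefore to understand powers of a single Jordan block. Writing $J_{\la,k} = \la I + N$ where $N$ is the nilpotent shift matrix with $N^k = 0$, the binomial theorem gives $M^i = \sum_{j=0}^{k-1}\binom{i}{j}\la^{i-j}N^j$. Applying this to $u = e_k$ (the last standard basis vector) and noting $N^j e_k = e_{k-j}$, one reads off that the $\ell$th component (counting from the top, $\ell=1,\dots,k$) of $M^i u$ is $\binom{i}{k-\ell}\la^{i-(k-\ell)}$. Summing over $i$, the $\ell$th component of $\pi(a_0a_1\dots)$ is $\sum_{i=0}^\infty a_i \binom{i}{k-\ell}\la^{i-(k-\ell)}$.

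The final step is to recognize this as a derivative. Since $\frac{d^s}{d\la^s}\la^i = i(i-1)\cdots(i-s+1)\,\la^{i-s} = s!\binom{i}{s}\la^{i-s}$, we get $\binom{i}{s}\la^{i-s} = \frac{1}{s!}\frac{d^s}{d\la^s}\la^i$, and hence, with $s = k-\ell$,
\[
\sum_{i=0}^\infty a_i\binom{i}{k-\ell}\la^{i-(k-\ell)} = \frac{1}{(k-\ell)!}\frac{d^{k-\ell}}{d\la^{k-\ell}}\sum_{i=0}^\infty a_i\la^i,
\]
which is exactly the $\ell$th entry claimed in the statement (the top entry $\ell=1$ giving the $(k-1)$st derivative, the bottom entry $\ell=k$ giving the undifferentiated sum). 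One should briefly remark that termwise differentiation is legitimate here because the series $\sum a_i\la^i$ converges on a neighborhood of the fixed $\la\in(0,1)$ and so may be differentiated term by term within its radius of convergence.

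**The main obstacle** is essentially bookkeeping: getting the off-by-one indexing right between the Jordan block size $k$, the row index $\ell$, the shift power $j = k-\ell$, and the order of differentiation. There is no analytic difficulty — the contraction property handles all convergence — so the proof is a short verification once the power-of-a-Jordan-block formula is in hand.
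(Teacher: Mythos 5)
Your proof is correct, and it reaches the same coordinate formula as the paper but by a somewhat different route. The paper proves, by induction on $n$, that the finite composition $T_{a_0}\cdots T_{a_n}(0,\dots,0)^{\mathsf T}$ has $\ell$th entry $\sum_{i=0}^n \binom{i}{k-\ell}a_i\la^{i-(k-\ell)}$, the inductive step applying one more affine map and absorbing the Jordan-block mixing of adjacent rows via Pascal's identity $\binom{i}{j}+\binom{i}{j-1}=\binom{i+1}{j}$; the derivative form in the statement is then read off exactly as you do, via $\binom{i}{s}\la^{i-s}=\frac{1}{s!}\frac{d^s}{d\la^s}\la^i$. You instead first reduce to the closed form $\pi(a_0a_1\dots)=\sum_{i\ge0}a_iM^iu$ (itself a one-line induction on the composition) and then compute $M^i$ once and for all from the decomposition $M=\la I+N$ with $N$ nilpotent, using the binomial theorem for commuting matrices and $N^je_k=e_{k-j}$. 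The content is the same combinatorics packaged differently: the paper's Pascal-identity induction is exactly what proves your expansion of $(\la I+N)^i$, so neither argument is deeper, but yours isolates a reusable standard fact (powers of a Jordan block) and makes the indexing $\ell\leftrightarrow j=k-\ell$ transparent, while the paper's version works directly with the IFS orbit of the origin and needs no matrix-power formula. One small point of care: a Jordan block with $\la$ near $1$ need not be a contraction in the Euclidean norm, so it is cleaner to justify convergence of $\sum a_iM^iu$ either by an adapted norm or simply by the summability of the entries $\binom{i}{j}\la^{i-j}$ for $|\la|<1$ (which you in effect use anyway when you differentiate the power series termwise); this is a cosmetic fix, not a gap.
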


\begin{proof}It suffices to show that
\begin{equation}
\label{eq:jordan}
T_{a_0}\dots T_{a_n}\begin{pmatrix} 0\\ \vdots \\ 0\end{pmatrix}=
\begin{pmatrix}
\sum_{i=0}^n \binom{i}{k-1} a_i\la^{i-k+1} \\
\sum_{i=0}^n \binom{i}{k-2} a_i\la^{i-k+2}\\
\vdots \\
\sum_{i=0}^n ia_i\la^{i-1}\\
\sum_{i=0}^n a_i \la^i
\end{pmatrix},
\end{equation}
with the usual convention that $\binom ij=0$ if $i<j$. We prove this by induction:
for $n=0$ we have
\[
T_{a_0}
\begin{pmatrix} 0\\ \vdots \\ 0 \\ 0\end{pmatrix} = \begin{pmatrix} 0\\ \vdots \\ 0 \\ a_0
\end{pmatrix},
\]
which is what we need. Assume (\ref{eq:jordan}) holds for $n-1$; then, given that
$T_{a_0}(v)=Mv+a_0(0,0,\dots,0,1)^T$,
\begin{align*}
T_{a_0}\begin{pmatrix}
\sum_{i=0}^{n-1} \binom{i}{k-1} a_{i+1}\la^{i-k+1} \\
\sum_{i=0}^{n-1} \binom{i}{k-2} a_{i+1}\la^{i-k+2}\\
\vdots \\
\sum_{i=0}^{n-1} ia_{i+1}\la^{i-1}\\
\sum_{i=0}^{n-1} a_{i+1} \la^i
\end{pmatrix}
&=
\begin{pmatrix}
\sum_{i=0}^{n-1} \left(\binom{i}{k-1}+\binom{i}{k-2}\right) a_{i+1}\la^{i-k+2}  \\
\sum_{i=0}^{n-1} \left(\binom{i}{k-2}+\binom{i}{k-3}\right) a_{i+1}\la^{i-k+3}\\
\vdots \\
\sum_{i=0}^{n-1} (i+1)a_{i+1}\la^i\\
\sum_{i=0}^{n-1} a_{i+1} \la^{i+1}+a_0
\end{pmatrix}
\\
&=\begin{pmatrix}
\sum_{i=0}^n \binom{i}{k-1} a_i\la^{i-k+1} \\
\sum_{i=0}^n \binom{i}{k-2} a_i\la^{i-k+2}\\
\vdots \\
\sum_{i=0}^n ia_i\la^{i-1}\\
\sum_{i=0}^n a_i \la^i
\end{pmatrix},
\end{align*}
as required.
\end{proof}

\begin{rmk}
It is easy to see how this would generalize to multiple Jordan blocks.
\end{rmk}

Return to the general case of $M$ given by (\ref{eq:M}).
The following theorem is along the lines of \cite[Theorem~3.1]{HS} and is based
on the ideas from \cite{Gunturk} (originally) and \cite{DJK}.

\begin{thm}
\label{thm:tool2}
Let $P(x) = x^n + b_{n-1} x_{n-1} + \dots + b_0$ with $n\ge N$.
Assume that
\begin{enumerate}
\item $P(1/\la_i) = P'(1/\la_i) = \dots = P^{(k_i-1)}(1/\la_i) = 0$ for $i = 1, \dots, r$.
\item $\sum_{j=0}^{n-1} |b_j| \leq 2$.
\item There exists a non-singular $N \times N$ submatrix of the matrix $B$ (defined by
(\ref{eq:B}) below).
\end{enumerate}
Then there exists a neighbourhood of $(\underbrace{0,0,\dots, 0}_{N})$ contained in $A$.
\end{thm}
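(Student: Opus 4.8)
The plan is to show that any point sufficiently close to the origin can be realized as $\pi(a_0a_1a_2\dots)$ for an appropriate choice of digits $(a_i)\in\{-1,+1\}^{\mathbb N}$, by a greedy-type selection argument in the spirit of \cite{Gunturk, DJK} and \cite[Theorem~3.1]{HS}. The key device is that condition~(i), which says $1/\la_i$ is a root of $P$ of multiplicity (at least) $k_i$, forces a \emph{linear recurrence with a summability property}: if $(c_j)$ is any sequence satisfying the recurrence $c_{j+n}=-\sum_{\ell=0}^{n-1}b_\ell c_{j+\ell}$, then applying the multiplicities of the $1/\la_i$ as roots of $P$ one gets that each coordinate of the partial sums of the vector-valued series $\sum_j c_j M^j u$ can be controlled; more precisely, the relation $P(M)=0$ (which holds because the minimal polynomial of $M$ is $\prod_i (x-\la_i)^{k_i}$ and condition~(i) says $x^nP(1/x)$, suitably normalized, is divisible by it) lets us rewrite tails of the series $\sum a_i M^i u$ as a finite linear combination of a fixed set of "state" vectors. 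So first I would set up this algebraic reduction carefully: identify $P(M)=0$ from condition~(i), and record that consequently $M^{j+n}u=-\sum_{\ell=0}^{n-1}b_\ell M^{j+\ell}u$ for all $j\ge0$.

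Next I would run the greedy algorithm. Given a target $w\in\mathbb R^N$ with $\|w\|$ small, we pick $a_0,a_1,\dots$ one at a time so that the remainder $w-\sum_{i=0}^{k}a_i M^i u$ stays in a controlled region. The natural quantity to track is not the remainder itself but its image under the "companion" coordinates: using the matrix $B$ of condition~(iii) (whose $N$ columns are presumably the first $N$ of the vectors $M^j u$, or the relevant block-structured analogue), we change coordinates so that choosing $a_k$ at stage $k$ shifts a length-$n$ window of past choices through the linear recurrence. Condition~(ii), $\sum_{j=0}^{n-1}|b_j|\le 2$, is exactly the inequality that guarantees the greedy choice $a_k=\operatorname{sign}(\text{current discrepancy})$ keeps the discrepancy bounded — this is the standard "$\sum|b_j|\le 2$ implies the one-step map is a contraction onto an interval/box containing $\pm1$" estimate. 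One then shows the set of $w$ reachable this way contains a neighbourhood of $0$: the reachable set is (up to the coordinate change by the nonsingular submatrix of $B$, which is where condition~(iii) is used — to transfer fullness of an $N$-box back to a genuine neighbourhood in $\mathbb R^N$) a product of intervals of positive length, and it is invariant under the inverse IFS maps $T_m^{-1},T_p^{-1}$ restricted appropriately, so iterating pushes a small box around $0$ inside it.

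Finally I would assemble these pieces: the greedy procedure produces, for each $w$ in an explicit $N$-box, a sequence $(a_i)$ with $\pi(a_0a_1\dots)=$ (the point corresponding to) $w$; condition~(iii) converts this box into an open set around the origin of $\mathbb R^N$; hence that open set lies in $A$. The main obstacle I expect is the bookkeeping in the higher-multiplicity (Jordan) coordinates: when $k_i>1$, the top coordinates involve $\binom{i}{k_i-1}\la_i^{\,i-k_i+1}$-type weights (cf. Lemma~\ref{lem:jordan-exp}), which grow polynomially before the geometric decay kicks in, so the naive contraction estimate from $\sum|b_j|\le2$ must be applied in the right (transformed) coordinates where the recurrence acts cleanly — getting the uniform bound on the discrepancy across all $N$ coordinates simultaneously, rather than coordinate-by-coordinate, is the delicate step, and it is precisely here that conditions~(i) and~(ii) must be used in tandem rather than separately. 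Everything else (existence of the limit defining $\pi$, compactness of cylinders, the change of variables) is routine.
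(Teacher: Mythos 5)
Your overall strategy -- a G\"unt\"urk/DJK-style scheme in which $\sum_j|b_j|\le 2$ allows a greedy choice of digits $a_j\in\{\pm1\}$ -- is the same one the paper uses, but two essential pieces are misstated or missing, and they are exactly the pieces that make the hypotheses do their work. First, condition (i) puts the roots of $P$ at $1/\la_i$, not at $\la_i$: what follows is that the \emph{reciprocal} polynomial $x^nP(1/x)$ annihilates $M$, i.e.\ $M^{j}u=-\sum_{\ell=1}^{n}b_{n-\ell}M^{j+\ell}u$, and not the recurrence $M^{j+n}u=-\sum_{\ell=0}^{n-1}b_\ell M^{j+\ell}u$ that you record. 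Second, and more seriously, you never explain how a prescribed target is actually attained. You assert that the set of reachable points is a box, invariant under $T_m^{-1},T_p^{-1}$, and hence contains a neighbourhood of the origin, but no argument is offered, and this is not how conditions (i) and (iii) enter. In the paper the target $(x_1,\dots,x_N)$ is encoded \emph{before} the greedy recursion starts: one solves $B\,(u_{-n},\dots,u_{-1})^T=(x_1,\dots,x_N)^T$ for a ``pre-history'' with $|u_j|\le1$ -- this is precisely where the non-singular $N\times N$ submatrix of condition (iii) is used, and $B$ is not built from the vectors $M^ju$ as you guessed, but from the partial convolutions $B_t(y)=\sum_{k=0}^{t}b_k y^{t-k}$ and their derivatives evaluated at the eigenvalues. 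One then defines $u_j=a_j-\sum_{k=0}^{n-1}b_k u_{j+k-n}$ for $j\ge0$, choosing $a_j\in\{\pm1\}$ greedily so that $|u_j|\le1$ (this is all that condition (ii) is for), and verifies the identity
\[
\sum_{j\ge0}a_jy^j=\sum_{k=0}^{n}\sum_{t=k}^{n-1}b_k y^{t-k}u_{t-n}+P(y^{-1})\sum_{t\ge n}u_{t-n}y^{t},
\]
whose evaluation (together with its derivatives up to order $k_i-1$) at $y=\la_i$ kills the infinite tail by condition (i) and leaves exactly $B\,(u_{-n},\dots,u_{-1})^T$, i.e.\ the target; by Lemma~\ref{lem:jordan-exp} this is $\pi(a_0a_1\dots)$.

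This boundary-term mechanism also removes the difficulty you flag at the end: there is no multi-coordinate discrepancy to bound uniformly, because the greedy estimate is applied to the single scalar sequence $(u_j)$, and the Jordan coordinates are recovered afterwards by differentiating the scalar identity. As written, your proposal leaves that ``delicate step'' open, relies on the wrong annihilating relation, and does not connect condition (iii) to the construction, so it does not yet constitute a proof of the theorem.
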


\begin{proof}
Let \[ B_t(y) = \sum_{k=0}^t b_k y^{t-k} \] for $t = 0, 1, \dots, n-1$.

Define the matrix $B$ as follows:
\begin{equation}
\label{eq:B}
B := \begin{pmatrix}
B_{0}^{(k_1-1)}(\la_1) & B_{1}^{(k_1-1)}(\la_1) &  \dots & B_{{n-1}}^{(k_1-1)}(\la_1) \\
     \vdots        &     \vdots         &         & \vdots \\
B_0^{(1)}(\la_1) & B_1^{(1)}(\la_1) &  \dots & B_{n-1}^{(1)}(\la_1) \\
B_0(\la_1)       & B_1(\la_1)       &  \dots & B_{n-1}(\la_1) \\
     \vdots        &     \vdots         &         & \vdots \\
     \vdots        &     \vdots         &         & \vdots \\
B_{0}^{(k_r-1)}(\la_r) & B_{1}^{(k_r-1)}(\la_r) &  \dots & B_{{n-1}}^{(k_r-1)}(\la_r) \\
     \vdots        &     \vdots         &         & \vdots \\
B_0^{(1)}(\la_r) & B_1^{(1)}(\la_r) &  \dots & B_{n-1}^{(1)}(\la_r) \\
B_0(\la_r)       & B_1(\la_r)       &  \dots & B_{n-1}(\la_r) \\
\end{pmatrix}.
\end{equation}
Here $B^{(s)}_t(y) = \frac{1}{s!} \frac{d^s}{d y^s} B_t(y)$.
Notice that $B$ is an $N \times n$ matrix.

Let $P$ have the required properties and let $u_{-n}, \dots, u_{-1}$ satisfy
\begin{equation}
\label{eq:un}
 \begin{pmatrix}x_1 \\ x_2 \\ \vdots \\ x_N  \end{pmatrix} =
    B
 \begin{pmatrix}u_{-n} \\ u_{-n+1} \\ \vdots \\ u_{-1} \end{pmatrix}.
\end{equation}
So long as some $N\times N$ sub-matrix of $B$ has non-zero determinant, we have that for all $x_i$
    sufficiently close to $0$, there is a solution of (\ref{eq:un})
    with small $u_j$. Specifically,
we can choose $\delta$ such that if $|x_i| < \delta$, then there is a solution with $|u_j| \leq 1$.

Fix a vector $(x_1, x_2, \dots, x_N)$ in the neighbourhood of $(0,\dots,0)$,
    where each $|x_i| < \delta$.
We will construct a sequence $(a_j)$ with $a_j \in \{-1, 1\}$ such that
    \[ 
    \pi (a_1 a_2 a_3 \dots) = (x_1, x_2, \dots, x_n), 
    \]
which will prove the result.
To do this, we first solve equation~\eqref{eq:un} for $u_{-n}, \dots, u_{-1}$  
    with $|u_i| \leq 1$.
We will then choose the $u_j$ and $a_j$ for $j = 0, 1, 2, 3, \dots$ by induction, such that
    \[
    u_j := a_j - \sum_{k=0}^{n-1} b_{k} u_{j+k-n}
    \]
and such that $u_j \in [-1,1]$ and $a_j \in \{-1, +1\}$.
We see that this is possible, as, by induction, all $u_j$ with $j < 0$ are such that
    $|u_j| \leq 1$.
Furthermore,
\begin{eqnarray*}
\left|\sum_{k=0}^{n-1} b_j u_{j+k-n}\right| & \leq & \sum_{k=0}^{n-1} |b_k u_{j+k-n}| \\
                                 & \leq & \sum_{k=0}^{n-1} |b_k| \\
                                 & \leq & 2,
\end{eqnarray*}
by our assumption on the $b_k$.
Hence there is a choice of $a_j$, either $+1$ or $-1$, such that
    $u_j = a_j - \sum_{k=0}^{n-1} b_k u_{j+k-n} \in [-1, 1]$.

We claim that this sequence of $a_j$ has the desired properties.
To see this, we first consider the base case (we put $b_n = 1$ for ease of notation).
Observe that:
\begin{eqnarray*}
\sum_{j=0}^\infty a_j y^j
    & = & \sum_{j=0}^\infty \left( \sum_{k=0}^n b_k u_{j+k-n} \right) y^j \\
    & = & \sum_{k=0}^n b_k y^{-k} \sum_{j=0}^\infty u_{j+k-n}  y^{j+k} \\
    & = & \sum_{k=0}^n b_k y^{-k} \left(\sum_{t=k}^{n-1} u_{t-n}  y^{t} +
                                          \sum_{t=n}^\infty u_{t-n}  y^{t} \right) \\
    & = & \sum_{k=0}^n \sum_{t=k}^{n-1} b_k y^{t-k} u_{t-n} +
          P(y^{-1}) \sum_{t=n}^\infty u_{t-n}  y^{t}.
\end{eqnarray*}

Evaluating at $y = \la_i$ and observing that $P(\la_i^{-1}) = 0$,
    this simplifies to
\begin{equation}
\sum_{t=0}^{n-1} u_{t-n} B_t(\la_i).
\label{eq:base}
\end{equation}

We further see that
\[
\frac{1}{s!} \frac{d^s}{d y^s} \left( \sum_{j=0}^\infty a_j y^j \right)
    = \frac{1}{s!} \frac{d^s}{d y^s} \left( \sum_{k=0}^n \sum_{t=k}^{n-1} b_k y^{t-k} u_{t-n} +
          P(y^{-1}) \sum_{t=n}^\infty u_{t-n}  y^{t} \right).
\]
Taking derivatives and evaluating at $\la_i$, this simplifies to
\begin{equation}
 \sum_{t=0}^{n-1} u_{t-n} B_t^{(s)}(\la_i),
\label{eq:induct}
\end{equation}

Combining equations \eqref{eq:base}, \eqref{eq:induct}
    with Lemma \ref{lem:jordan-exp} gives
\begin{align*}
\pi(a_0 a_1 a_2 \dots) & =
        \begin{pmatrix}
        \frac{1}{(k_1-1)!}\frac{d^{k_1-1}}{d \la_1^{k_1-1}} \sum_{i=0}^\infty a_i \la_1^i \\
        \frac{1}{(k_1-2)!}\frac{d^{k_1-2}}{d \la_1^{k_1-2}} \sum_{i=0}^\infty a_i \la_1^i \\
        \vdots \\
        \frac{d}{d \la_1} \sum_{i=0}^\infty a_i \la_1^i \\
        \sum_{i=0}^\infty a_i \la_1^i \\
        \vdots \\
        \frac{1}{(k_r-1)!}\frac{d^{k_r-1}}{d \la_r^{k_r-1}} \sum_{i=0}^\infty a_i \la_r^i \\
        \frac{1}{(k_r-2)!}\frac{d^{k_r-2}}{d \la_r^{k_r-2}} \sum_{i=0}^\infty a_i \la_r^i \\
        \vdots \\
        \frac{d}{d \la_r} \sum_{i=0}^\infty a_i \la_r^i \\
        \sum_{i=0}^\infty a_i \la_r^i
        \end{pmatrix}
    & =
        \begin{pmatrix}
        \sum_{t=0}^{n-1} u_{t-n} B_t^{(k_1-1)}(\la_1) \\
        \sum_{t=0}^{n-1} u_{t-n} B_t^{(k_1-2)}(\la_1) \\
        \vdots \\
        \sum_{t=0}^{n-1} u_{t-n} B_t^{(1)}(\la_1) \\
        \sum_{t=0}^{n-1} u_{t-n} B_t(\la_1) \\
        \vdots \\
        \sum_{t=0}^{n-1} u_{t-n} B_t^{(k_1-1)}(\la_r) \\
        \sum_{t=0}^{n-1} u_{t-n} B_t^{(k_1-2)}(\la_r) \\
        \vdots \\
        \sum_{t=0}^{n-1} u_{t-n} B_t^{(1)}(\la_r) \\
        \sum_{t=0}^{n-1} u_{t-n} B_t(\la_r)
        \end{pmatrix} \\
    & = B \begin{pmatrix} u_{-n} \\ u_{-n+1} \\ \vdots \\ u_{-1} \end{pmatrix}
     = \begin{pmatrix} x_1 \\ x_2 \\ \vdots \\ x_N \end{pmatrix},
\end{align*}
which proves the desired result.
\end{proof}

\begin{rmk}\label{rmk:nonempty}
It is worth observing that if $M$ is an $N \times N$ matrix with distinct
    eigenvalues sufficiently close to (but less than) $1$ in absolute value,
    then the $N$-dimensional attractor $A$ will have non-empty interior.
Here ``sufficiently close'' depends only on $N$.
This follows from essentially the same proof as in \cite[Theorem~3.4]{HS} using the
    polynomial
 \[ P(x) = x^{m n + 2} - x^{nm} + b_{m-1} x^{(m-1) n} + b_{m-2} x^{(m-2) n} +  \dots + b_{0}\]
  and $n$ even.
We can choose the $b_i$ of this polynomial such that
$\sum |b_i| < 2$ and $(x^2-1)^m | P(x)$.
Letting $P(x) = Q(x) (x^2-1)^m$, we have for $\lambda_i$ sufficiently close
    to $1$ that
\begin{eqnarray*}
P^*(x) &=& Q(x) (x^2-1/\lambda_1^2) (x^2-1/\lambda_2^2) \cdots
              (x^2-1/\lambda_m^2) \\
       &=& x^{m n + 2} + b^*_{nm+1} x^{nm+1} + \cdots + b_0^*
\end{eqnarray*}
will also have $\sum |b_i^*| < 2$.
\end{rmk}

It seems highly likely that the same would be true for the case where $M$ contains
    non-trivial Jordan blocks, although the analysis becomes much messier.

\subsection{The mixed real case}

Here we apply Theorem~\ref{thm:tool2} with roots $-\lambda$ and $\mu$ and $k_1 = k_2 = 1$.
The polynomial we use is ($N=n=2$):
\[ P(x) = x^2 + \left(\frac{1}{\la}-\frac{1}{\mu}\right) x - \frac{1}{\mu\la}. \]

Observe that $P(-1/\la) = P(1/\mu) = 0$. The matrix $B$ in this case is
\[ B =
\begin{pmatrix}
 B_0(-\la) & B_1(-\la)  \\
 B_0(\mu)  & B_1(\mu)
\end{pmatrix}
=
\begin{pmatrix}
- \frac{1}{\la \mu} & \frac{1}{\la} \\
- \frac{1}{\la \mu} & -\frac{1}{\mu}
\end{pmatrix}.
\]
We see that this has determinant $\frac{\la+\mu}{\la^2 \mu^2} \neq 0$, as we are assuming
    both $\la, \mu > 0$. Since
\[
\left|\frac1{\la}-\frac1{\mu}\right|+\frac1{|\mu\la|}\le2, \quad \frac1{\sqrt2}\le \la,\mu\le1,
\]
we infer
\begin{cor}
For all $\frac1{\sqrt2} \leq \la, \mu \leq 1$ we have that $(0,0)$ lies in the interior of $A_{-\la, \mu}$.
\end{cor}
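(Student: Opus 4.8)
The plan is to invoke Theorem~\ref{thm:tool2} directly, with $N=n=2$, eigenvalues $\la_1=-\la$ and $\la_2=\mu$ (so that $k_1=k_2=1$), and the polynomial $P(x)=x^2+(1/\la-1/\mu)x-1/(\la\mu)$ already written down just above the statement. Everything then reduces to checking the three hypotheses of that theorem on the parameter square $\frac1{\sqrt2}\le\la,\mu\le1$.

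Hypotheses (1) and (3) are immediate and have essentially been recorded in the text. For (1), since all $k_i=1$ no derivative conditions are required, and $P(x)=(x+1/\la)(x-1/\mu)$, so $P(1/\la_1)=P(-1/\la)=0$ and $P(1/\la_2)=P(1/\mu)=0$. For (3), the matrix $B$ is the explicit $2\times2$ matrix displayed above, whose determinant equals $(\la+\mu)/(\la^2\mu^2)$; this is nonzero since $\la,\mu>0$, so $B$ itself serves as the required non-singular submatrix.

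The only substantive point is hypothesis (2), namely $|1/\la-1/\mu|+1/(\la\mu)\le2$. Rewriting the left-hand side as $(|\mu-\la|+1)/(\la\mu)$, this is equivalent to $|\mu-\la|+1\le2\la\mu$. Since the inequality is symmetric in $\la$ and $\mu$, I may assume $\la\le\mu$, so that it becomes $g(\mu):=(2\la-1)\mu+\la-1\ge0$. Because $\la\ge\frac1{\sqrt2}>\frac12$, the coefficient $2\la-1$ is positive, so $g$ is increasing in $\mu$ on $[\la,1]$ and attains its minimum at $\mu=\la$, where $g(\la)=2\la^2-1\ge 2(1/\sqrt2)^2-1=0$. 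Hence (2) holds on the whole range.

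With all three hypotheses verified, Theorem~\ref{thm:tool2} produces a neighbourhood of $(0,0)$ contained in $A_{-\la,\mu}$, which is exactly the claim. I do not expect a genuine obstacle here: the conditions are arranged so that two of them are bookkeeping and the third is a one-variable monotonicity estimate. It is worth noting that the constant $\frac1{\sqrt2}$ is optimal for this argument, since equality holds in (2) precisely when $\la=\mu=\frac1{\sqrt2}$; pushing the bound lower would require a different choice of $P$ (or a different method altogether).
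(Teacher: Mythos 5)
Your proposal is correct and is essentially the paper's own argument: apply Theorem~\ref{thm:tool2} with the same polynomial $P$, the same matrix $B$ with determinant $(\la+\mu)/(\la^2\mu^2)\neq0$, and the bound $|1/\la-1/\mu|+1/(\la\mu)\le2$ on the given range. You merely spell out the elementary verification of condition (2) (and the sharpness remark at $\la=\mu=1/\sqrt2$), which the paper states without detail.
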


The above gives us a sufficient condition for checking whether a point $(-\la, \mu) \in
    \Z_{\BbR}$.
To show a point $(-\la, \mu) \not\in \Z_{\BbR}$ it
    suffices to show that $(0,0) \not\in A$.
This can be done utilizing information about the convex hull of $A$ and using
    the techniques described in \cite{HS}.
In particular, let $K = K_0$ be the convex hull of $A$ and let
    $K_n = T_p(K_{n-1}) \cup T_m(K_{n-1})$.
It is easy to see that $A \subset K_n$ for all $n$.
Hence if there exists an $n$ such that $(0,0) \not\in K_n$ then
    $(0,0) \not\in A$.
A precise description of $K$ is given in Section~\ref{sec:unique}.
See Figure~\ref{fig:Z2} for illustration.

\begin{figure}
\includegraphics[width=275pt,height=275pt]{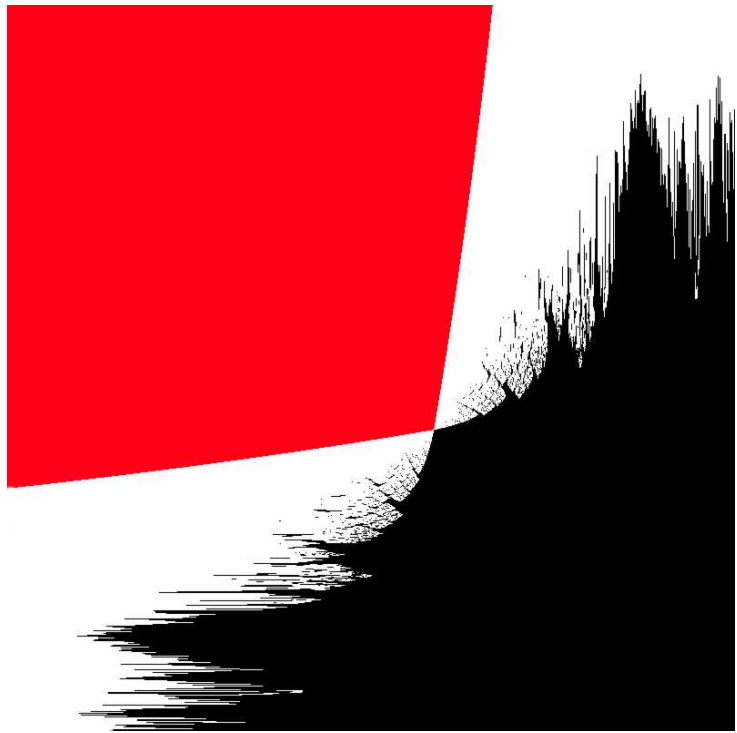}
\caption{Points in $\Z_{\mathbb R}$ (red) and points not in $\Z_{\mathbb R}$ (black)}
\label{fig:Z2}
\end{figure}

\subsection{The Jordan block case}

Consider the polynomial ($n=8, N=2$)
\[
P(x) = x^8 - \frac{8}{7 \nu} x^7 + \frac{1}{7 \nu^8}.
\]
A quick check shows that $P(1/\nu) = P'(1/\nu) = 0$.
Furthermore, for all $\nu \geq 0.831458513$ then we have
\[ \left| \frac{8}{7 \nu} \right| + \left|\frac{1}{7 \nu^8}\right| \leq 2. \]

In this case, the matrix from the Proof is
\[
B=\begin{pmatrix}
0 & \frac{1}{7 \nu^8} & \frac{2}{7 \nu^7} & \frac{3}{7 \nu^6} & \frac{4}{7 \nu^5} &
    \frac{5}{7 \nu^4} & \frac{6}{7 \nu^3} & \frac{7}{7 \nu^2} \\
\frac{1}{7 \nu^8} & \frac{1}{7 \nu^7} & \frac{1}{7 \nu^6} & \frac{1}{7 \nu^5} &
    \frac{1}{7 \nu^4} & \frac{1}{7 \nu^3} & \frac{1}{7 \nu^2} & -\frac{1}{\nu}
\end{pmatrix}.
\]
Clearly, the first $2\times 2$ minor of $B$ in this case is non-zero.

It is shown in \cite[Theorem 2.6]{ShSo} that
    if $\nu < 0.6684$ that $A_\nu$ is disconnected, and hence
    totally disconnected, whence $\nu \not \in \Z_J$.
Here we have that if $\nu > 0.8315$ then $\nu \in \Z_J$.
Where exactly this dividing line is between these
    two conditions is still unclear.
For that matter, it is not even clear if $\Z_J$ is a connected set, so the
    term ``dividing line'' might not be an accurate description
    of the boundary.

\subsection{The complex case}

Theorem~\ref{thm:tool2} does not seem to be applicable here, so we use a different method.
Notice that this method works for the other three cases as well (and even higher-dimensional
ones -- see \cite{HS-multi}) but gives worse bounds. 

\begin{thm}
If $A_{\ka^2}$ is connected, then $A_\ka$ has non-empty interior. In particular, this is the case
if $|\kappa| \ge 2^{-1/4}$.
\end{thm}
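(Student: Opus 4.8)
The plan is to combine a ``digit‑doubling'' reduction with a topological fact about Minkowski sums of planar continua. Identify $\BbR^2$ with $\BbC$, so that $A_\ka=\{\sum_{j\ge0}a_j\ka^j:a_j\in\{-1,1\}\}$. Splitting an address into its even‑ and odd‑indexed subsequences gives
\[
\sum_{j\ge0}a_j\ka^j=\sum_{k\ge0}a_{2k}(\ka^2)^k+\ka\sum_{k\ge0}a_{2k+1}(\ka^2)^k,
\]
and since the two subsequences range freely and independently over $\{-1,1\}^{\BbN}$ this yields the identity $A_\ka=A_{\ka^2}+\ka\,A_{\ka^2}$. Hence it suffices to show that $C+\ka C$ has non‑empty interior, where $C:=A_{\ka^2}$. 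Note that $C$ is compact; it contains the two distinct points $\pm\frac1{1-\ka^2}$ (as $\ka^2\neq1$), so it is a non‑degenerate continuum once we assume it connected; and $\ka\notin\BbR$ in the complex case.

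The key ingredient is the following purely topological lemma, which I attribute to Kleptsyn: \emph{if $C\subset\BbC$ is a continuum with more than one point and $w\in\BbC\setminus\BbR$, then $C+wC$ has non‑empty interior.} Applying this with $w=\ka$ finishes the first claim. To prove the lemma, first replace $C$ by $\lambda C+v$ for suitable $\lambda\neq 0, v$ so that $0,1\in C$; this turns $C+wC$ into $\lambda(C+wC)+(1+w)v$, an affine image of itself, and leaves $w$ unchanged. Connectedness forces $\operatorname{Re}(C)$ to be an interval containing $[0,1]$, and one extracts a subcontinuum of $C$ that is irreducible between $0$ and $1$ and hence sweeps ``monotonically'' across the strip $0\le\operatorname{Re}(z)\le1$; meanwhile, since $\operatorname{Im}(w)\neq0$, the set $wC$ contains $0$ and $w$ and therefore sweeps across a transverse strip of positive height. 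Adding a set with controlled horizontal spread to one with controlled transverse spread produces a set containing a full skew rectangle, which gives the interior point.

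The crux — and the step I expect to be the main obstacle — is that last combination. Having a positive‑length projection in every direction does \emph{not} imply a non‑empty interior (a ``plus sign'', or a circle, is a counterexample), and for \emph{disconnected} sets a Minkowski sum can meet every vertical, resp.\ horizontal, line and still have empty interior: $\big([0,1]\times\{0\}\big)+\big(\{0\}\times K\big)=[0,1]\times K$ is nowhere dense when $K$ is a positive‑measure Cantor set. So connectedness of $C$ must be used in an essential way — via the irreducible‑subcontinuum structure — to rule out such obstructions; turning ``$C$ meets every line of the strip'' into ``$C$ sweeps out a genuine two‑dimensional region'' is the real content, and it is where I would spend the most care.

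For the explicit range, note that $|\ka|\ge2^{-1/4}$ gives $|\ka^2|\ge2^{-1/2}$, so it remains to see that $A_\mu$ is connected whenever $|\mu|\ge2^{-1/2}$. By Hata's criterion, a two‑map IFS has connected attractor iff its two pieces meet, and $T_m(A_\mu)\cap T_p(A_\mu)\neq\emptyset$ means $\mu a-1=\mu b+1$ for some $a,b\in A_\mu$, i.e.\ $2/\mu\in A_\mu-A_\mu$. Since $A_\mu=-A_\mu$ we have $A_\mu-A_\mu=A_\mu+A_\mu=2S_\mu$, where $S_\mu$ is the attractor of the three‑map IFS $z\mapsto\mu z+d$, $d\in\{-1,0,1\}$; so connectedness is equivalent to $1/\mu\in S_\mu$. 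This holds throughout $|\mu|\ge2^{-1/2}$: it is the classical fact, due to Bandt, that the connectedness locus of $\{z\mapsto\mu z\pm1\}$ contains the annulus $2^{-1/2}\le|\mu|<1$, which I would quote, or re‑derive via a short covering estimate showing that the ``digits'' $\mu^j$ overlap enough — here $|\mu|^2\ge\tfrac12$ is precisely what is needed — for $S_\mu$ to capture $1/\mu$.
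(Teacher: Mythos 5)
Your reduction is the same as the paper's: the even/odd digit split giving $A_\ka=A_{\ka^2}+\ka A_{\ka^2}$, and the connectivity threshold $|\ka^2|\ge 2^{-1/2}$ (the paper quotes Barnsley--Harrington, Proposition~1; your Hata/Bandt route to the same fact is fine). The problem is the step you yourself flag as the crux: your ``key lemma'' --- that $C+\ka C$ has non-empty interior for every non-degenerate continuum $C$ and non-real $\ka$ --- is never proved, and the sketch offered does not amount to an argument. Extracting a subcontinuum of $C$ irreducible between $0$ and $1$ does not give anything that ``sweeps monotonically'' across the strip: irreducible continua can be indecomposable, and even in tame cases (closed topologist's sine curve) there is no monotone parametrization to exploit. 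What the sketch actually delivers is only ``$C$ meets every vertical line of the strip'' and ``$\ka C$ meets every line of a transverse strip,'' and, as your own counterexamples show ($[0,1]\times\{0\}$ plus $\{0\}\times K$ with $K$ a Cantor set), such incidence information alone does not force the Minkowski sum to contain a skew rectangle or to have interior. So the heart of the theorem is missing.

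For comparison, the paper does not attempt the general-continuum statement. It first upgrades connectedness to path-connectedness via the Hahn--Mazurkiewicz theorem (a connected attractor is locally connected), and then proves the interior statement for a sum of two \emph{paths} $\gamma+\gamma'$ by a winding-number argument (Kleptsyn): consider the loop $\{\gamma(s)+\gamma'(t):(s,t)\in\partial([0,1]^2)\}$; any point of non-zero index with respect to it lies in $\gamma([0,1])+\gamma'([0,1])$ and is interior, and if no such point ever exists, a Jordan-curve/translation-symmetry argument forces both paths to be straight lines, in which case non-parallelism (here automatic because $\gamma'$ is a $\ka$-rotated copy and $\ka\notin\BbR$) yields a parallelogram. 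If you want to salvage your write-up, the shortest fix is to follow this route: either reduce to paths via Hahn--Mazurkiewicz and prove (or cite) the path version of the lemma with the index argument, or supply a genuine proof of your stronger continuum version, which is a substantially harder statement than what your sketch supports.
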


\begin{proof}
Note first that if $|\kappa^2| \ge 1/\sqrt{2}$ then $A_{\kappa^2}$ is connected -- see
\cite[Proposition~1]{BH}.
Moreover, by the Hahn-Mazurkiewicz theorem,  $A_{\ka^2}$ is path connected.

Let $a, b \in A_{\ka^2}$ and $\gamma$ the path connecting them.
Consider $\ka a, \ka b \in \ka A_{\ka^2}$ and let $\gamma'$ be the path between them.
As $\ka \not \in \BbR$, we see that $\gamma$ and $\gamma'$ cannot be parallel lines.
By observing that
    $\sum a_i \kappa^i = \sum a_{2i} \kappa^{2i} + \ka \sum a_{2i +1} \kappa^{2i}$,
    we have $A_\ka = A_{\ka^2} + \ka A_{\ka^2}$ (the Minkowski sum).
In particular, $A_\ka$ will contain $\gamma + \gamma'$.
By Theorem~\ref{thm:Victor} below, $\gamma+ \gamma'$ contains points in its
    interior, whence so does $A_\ka$.

Hence if $|\ka|\ge 2^{-1/4}$, then $A_\ka$ has non-empty interior.
\end{proof}

\begin{rmk}
A great deal of information is known about the set $\mathcal{M}$ of all $\ka$ for
which $A_\ka$ is connected -- see \cite{Cal} and references therein.
\end{rmk}

The following proof is by V.~Kleptsyn (via Mathoverflow \cite{MO}).
\begin{thm}[V.~Kleptsyn]
\label{thm:Victor}
If $\gamma$ and $\gamma'$ are two paths in $\BbR^2$, not both parallel lines, then
    $\gamma + \gamma'$ has non-empty interior.
\end{thm}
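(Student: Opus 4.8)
The plan is to reduce the statement to a local claim: it suffices to find two points, one on $\gamma$ and one on $\gamma'$, near which the two curves have well-defined, non-parallel tangent directions, and then show that the Minkowski sum of two small curve-arcs crossing transversally fills a neighbourhood. First I would dispose of the degenerate possibilities. If $\gamma$ is a single point, then $\gamma+\gamma'$ is a translate of $\gamma'$, which is not a line (since not both are parallel lines — in fact if one is a point the hypothesis forces the other not to be a line, but actually we need to be careful: a single point plus a line is a line, so the hypothesis must be read as excluding this too; I would phrase the hypothesis as ``$\gamma+\gamma'$ is not contained in a line'', or simply assume both $\gamma,\gamma'$ are non-degenerate arcs, which is the only case that arises in the application). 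So assume both $\gamma$ and $\gamma'$ are genuine arcs, i.e. non-constant continuous images of $[0,1]$.

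The key step is a rectifiability/differentiability reduction. A continuous path need not be differentiable anywhere, so I cannot directly speak of tangent lines. Instead I would argue as follows. A path whose image is not contained in a line must, by a Baire-category or direct argument, contain two points $p_1\ne p_2$ on $\gamma$ and $q_1\ne q_2$ on $\gamma'$ such that the chord direction $p_2-p_1$ is not parallel to $q_2-q_1$ — otherwise every chord of $\gamma$ would be parallel to every chord of $\gamma'$, forcing both images into parallel lines, contradicting the hypothesis. Having fixed such chords, restrict $\gamma$ to a subarc $\gamma_0$ from $p_1$ to $p_2$ and $\gamma'$ to a subarc $\gamma_0'$ from $q_1$ to $q_2$. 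Now I use a connectedness/intermediate-value argument on the sum: parametrise $\gamma_0$ by $s\in[0,1]$ with endpoints $p_1,p_2$ and $\gamma_0'$ by $t\in[0,1]$ with endpoints $q_1,q_2$, and consider the continuous map $F(s,t)=\gamma_0(s)+\gamma_0'(t)$ from the square $[0,1]^2$ to $\BbR^2$. The boundary of the square maps to a loop in $\BbR^2$; I claim this loop has nonzero winding number about the point $m=\tfrac12(p_1+p_2)+\tfrac12(q_1+q_2)$ (the ``centre''), because the four corners $p_i+q_j$ are the vertices of a genuine (non-degenerate) parallelogram — its non-degeneracy is exactly the non-parallelism of the chosen chords — and the four boundary edges of $F$ connect these four vertices in cyclic order. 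A standard degree-theory fact then gives that $F$ is surjective onto a neighbourhood of $m$, so $\gamma_0+\gamma_0'\subseteq\gamma+\gamma'$ has non-empty interior.

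The main obstacle is making the winding-number claim rigorous without any smoothness: the four boundary arcs of $F$ can be wild, so I must show the loop $\partial F$ is homotopic (in $\BbR^2\setminus\{m\}$) to the boundary of the solid parallelogram with vertices $p_i+q_j$. For this I would use that each boundary edge, say $t\mapsto p_1+\gamma_0'(t)$, is a translate of $\gamma_0'$, which is a \emph{connected} set joining two of the parallelogram vertices; a compactness argument produces an $\varepsilon>0$ so that perturbing $\gamma$ and $\gamma'$ to straight segments (via a convex-combination homotopy $H_\tau$ between the arc and the chord, which stays inside suitable strips) keeps the running sum at distance $\ge\varepsilon$ from $m$ throughout. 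Concretely, I would choose $p_1,p_2,q_1,q_2$ with the extra care that the open parallelogram they span is "stable": there are parallel strips $S,S'$ with $\gamma_0\subseteq S$, $\gamma_0'\subseteq S'$ and $S\cap S'$ a bounded parallelogram, ensuring the sum of the two strips misses $m$ except in a controlled region. Once the homotopy invariance of degree is in hand the rest is the classical statement that a degree-one (or nonzero-degree) map is locally surjective, and the proof closes. Alternatively — and this may be cleaner to write — I would invoke directly the following packaged fact: if $C_1,C_2\subseteq\BbR^2$ are compact connected sets, $C_1$ meeting two parallel lines $\{x=0\},\{x=1\}$ and $C_2$ meeting $\{y=0\},\{y=1\}$, then $C_1+C_2\supseteq$ a neighbourhood of the "centre", which follows from the topological lemma that two such connected sets, viewed as "barriers", force the sum map to cover an open set; I would cite or prove this as a two-line corollary of the Jordan-curve/linking structure and apply it after an affine change of coordinates sending the two non-parallel chord directions to the axes.
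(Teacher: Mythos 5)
Your overall frame---consider $F(s,t)=\gamma_0(s)+\gamma_0'(t)$ on a square and use the standard fact that every point of nonzero index of the boundary loop lies in the image of $F$---is the same degree-theoretic core as the paper's argument. The gap is in how you produce a point of nonzero index. Your first formulation, that since the four corners $p_i+q_j$ form a nondegenerate parallelogram and the four boundary edges ``connect these vertices in cyclic order'' the loop must wind nontrivially around the centre $m$, is simply false for arbitrary continuous arcs: each boundary edge is a translate of $\gamma_0$ or $\gamma_0'$, and such a translate can pass through $m$, or wind around it, so the index at $m$ may be zero or undefined. You recognize this and propose the strip fix: confine $\gamma_0$ to a strip $S$ parallel to $p_2-p_1$ and $\gamma_0'$ to a strip $S'$ parallel to $q_2-q_1$, and homotope each edge to its chord inside the convex strip, which avoids $m$ provided $m$ lies in none of the four translated strips $S+q_1$, $S+q_2$, $p_1+S'$, $p_2+S'$. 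But you never prove that such a configuration exists for arbitrary continuous paths, and that is the whole difficulty: $m\notin S+q_j$ forces the deviation of $\gamma_0$ transversal to its chord to be less than half the component of $q_2-q_1$ transversal to $p_2-p_1$, and symmetrically for $\gamma_0'$; these two requirements are coupled, because shrinking a sub-arc to tame its deviation also shrinks its chord and thereby tightens the constraint on the other curve. The existence of non-parallel chords (which you do prove correctly) is far from sufficient. The fallback ``packaged fact'' does not rescue this: as stated (two continua merely \emph{meeting} two pairs of parallel lines) it lacks the containment hypotheses any crossing-lemma argument needs, it does not even specify a well-defined ``centre'', and once you add the hypotheses that would make it plausible it is essentially a restatement of the theorem being proved, so invoking it is circular rather than a two-line corollary of the Jordan curve theorem.

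For contrast, the paper avoids having to find any good chord/strip configuration. Kleptsyn's lemma shows that if \emph{no} point has nonzero index for the loop associated with a thin sub-square $[s_1,s_2]\times[0,1]$, then the curves $\gamma(s_1)+\gamma'([0,1])$ and $\gamma(s_2)+\gamma'([0,1])$ coincide outside small neighbourhoods of their ends; letting $s_1\to s_2$, the curve $\gamma'$ admits arbitrarily small translation symmetries away from its endpoints and hence is a straight segment, and by symmetry so is $\gamma$, in which case the sum is a nondegenerate parallelogram by the non-parallelism hypothesis. So either a nonzero-index point exists (and one is done by the degree argument you also use), or one lands in the excluded straight-line case. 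To salvage your route you would need an actual proof that the strip configuration can always be found, or replace that step by a dichotomy of this kind.
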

\begin{proof}See Appendix.
\end{proof}

\section{Unique addresses and convex hulls}
\label{sec:unique}

Recall that a point $(x,y) \in A$ has a unique address (notation: $(x,y)\in\mathcal U$) if there is a
    unique sequence $(a_i)_0^\infty \in \{p, m\}^{\mathbb N}$ such that $(x,y) = \pi (a_1 a_2 a_3 \dots)$.
These have been studied in \cite{HS} for the positive real case and in \cite{GS} for the one-dimensional
real case.
We say the set of all such points in $A$ is the {\em set of
    uniqueness} and denote it by $\U_{-\la, \mu}$, $\U_{\nu}$ and $\U_\ka$ for the
    mixed real case the Jordan block case, and the complex case respectively.

The purpose of this section is to provide a proof of Theorem~\ref{thm:uniq} by considering
all three cases.

The main outline of all three of these proofs are the same:
\begin{itemize}
\item find the vertices for the convex hull of $A$;
\item show that these vertices have unique addresses;
\item using these vertices, in combination with Lemma~\ref{thm:unique tool} below,
      construct a set of points with unique addresses that have positive
      Hausdorff dimension.
\end{itemize}

\begin{lemma}
\label{thm:unique tool}
Denote $\overline m=p, \overline p=m$ and assume that
$u = a_1 a_2 \dots a_\ell$, $v = b_1 b_2 \dots b_k$ and $w = c_1 c_2 \dots c_n$ satisfy
\begin{itemize}
\item $\pi [a_i a_{i+1} \dots a_\ell b_1 b_2 \dots b_k a_1 a_2 \dots a_\ell] \cap \pi [\overline{a_i}] = \emptyset $;
\item $\pi [b_j b_{j+1} \dots b_k a_1 a_2 \dots a_\ell] \cap \pi [\overline{b_j}] = \emptyset $;
\item $\pi [a_i a_{i+1} \dots a_\ell c_1 c_2 \dots c_n a_1 a_2 \dots a_\ell] \cap \pi [\overline{a_i}] = \emptyset $;
\item $\pi [c_j c_{j+1} \dots c_n a_1 a_2 \dots a_\ell] \cap \pi [\overline{c_j}] = \emptyset $.
\end{itemize}
Then the images of $\{uv, uw\}^*$ under $\pi$ all have unique addresses.
That is, the images of all infinite words of the form
    $t_1 t_2 t_3 \dots$ with $t_i \in \{uv, uw\}$ under $\pi$ all have
    unique addresses.
\end{lemma}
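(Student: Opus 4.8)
The plan is to show that any word $t_1t_2t_3\cdots$ with $t_i\in\{uv,uw\}$ is the \emph{only} address of the point $\pi(t_1t_2t_3\cdots)$, by deriving a contradiction from the existence of a second address. Suppose $(a_i')_{1}^{\infty}$ is another address for the same point, and let $j$ be the first index at which it disagrees with the given word $s=t_1t_2\cdots$ (read as an infinite $\{m,p\}$-sequence). Then the two tails $\pi[s_j s_{j+1}\cdots]$ and $\pi[\overline{s_j}]$ must both contain the common point; in particular $\pi[s_j s_{j+1}\cdots]\cap\pi[\overline{s_j}]\neq\emptyset$. So the whole argument reduces to checking that for \emph{every} position $j\ge 1$ in the infinite concatenation of blocks from $\{uv,uw\}$, one has
\[
\pi[s_j s_{j+1} s_{j+2}\cdots]\cap\pi[\overline{s_j}]=\emptyset .
\]
Since $\pi[\overline{s_j}]$ depends only on the single symbol $s_j$, and cylinders are nested ($\pi[\omega_1\cdots\omega_r]\subseteq\pi[\omega_1]$), it suffices to find, for each $j$, a finite prefix $P_j$ of the tail $s_js_{j+1}\cdots$ with $\pi[P_j]\cap\pi[\overline{s_j}]=\emptyset$; the four displayed hypotheses are designed to furnish exactly these prefixes.

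The key step is a case analysis on where position $j$ falls relative to the block decomposition, reducing the infinitely many positions $j$ to finitely many "shapes" of tail-prefix. Write $u=a_1\cdots a_\ell$, $v=b_1\cdots b_k$, $w=c_1\cdots c_n$. If $j$ lies inside a $u$-block, say $s_j=a_i$, then the tail begins $a_i a_{i+1}\cdots a_\ell$ followed by either $v a_1\cdots a_\ell$ or $w a_1\cdots a_\ell$ (the next block starts with $u$ in either case, and we only need to read as far as the end of that following $u$); hypotheses (i) and (iii) give $\pi[a_i\cdots a_\ell b_1\cdots b_k a_1\cdots a_\ell]\cap\pi[\overline{a_i}]=\emptyset$ and $\pi[a_i\cdots a_\ell c_1\cdots c_n a_1\cdots a_\ell]\cap\pi[\overline{a_i}]=\emptyset$, and since the actual tail-prefix refines one of these two cylinders, its image lies inside $\pi[\overline{a_i}]^c$ as needed. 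If $j$ lies inside a $v$-block, say $s_j=b_i$ (this block arose from some $t_r=uv$, so it is followed by a $u$-block), the tail begins $b_i\cdots b_k a_1\cdots a_\ell$, and hypothesis (ii) applies directly. Symmetrically, if $j$ lies inside a $w$-block, hypothesis (iv) applies. Combining across all cases, $\pi[s_js_{j+1}\cdots]\cap\pi[\overline{s_j}]=\emptyset$ for every $j\ge1$, contradicting the existence of a differing address; hence the address $t_1t_2\cdots$ is unique.

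The main obstacle is purely bookkeeping: one must be careful that in the $u$-block case the next block always \emph{begins with} $u$ (true, since each $t_i$ equals $uv$ or $uw$, both starting with $u$), so that after running off the end of the current $u$-block the tail indeed continues with $b_1\cdots b_k a_1\cdots a_\ell$ or $c_1\cdots c_n a_1\cdots a_\ell$, matching the cylinders in (i) and (iii); and likewise that a $v$- (resp.\ $w$-) block is always immediately followed by a $u$-block, so that (ii) (resp.\ (iv)) has the right continuation $a_1\cdots a_\ell$. There is also the degenerate edge case where $j$ sits at the very start of a block, which is subsumed by taking $i=1$ in the appropriate hypothesis. Once these adjacency facts are pinned down, the conclusion about $\{uv,uw\}^{*}$ is immediate. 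Note that the statement as phrased — "the images of $\{uv,uw\}^*$" together with "all infinite words $t_1t_2\cdots$" — should be read as the latter; the finite words serve only as the index set for the infinite concatenations.
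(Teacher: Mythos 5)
Your argument is correct and is essentially the paper's own proof: the paper likewise observes that every shift of an infinite word from $\{uv,uw\}^*$ has a prefix of one of the four listed forms, so the first symbol of each shifted tail is forced, and uniqueness follows by peeling off symbols with $T_m^{-1}$, $T_p^{-1}$ (your first-disagreement contradiction is the same induction in contrapositive form). Your explicit case analysis on whether the position lies in a $u$-, $v$- or $w$-block, together with the nesting of cylinders, just spells out the bookkeeping the paper leaves implicit.
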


\begin{proof}
We see that any shift of a word from $\{uv, uw\}^*$ is such that it's prefix will be of
    one of the four forms listed above.
Further, by assumption, the first term is uniquely determined.
By applying $T_m^{-1}$ or $T_p^{-1}$ as appropriate, we get that all terms are uniquely
    determined, which proves the result.
\end{proof}

\begin{cor}If the conditions of Lemma~\ref{thm:unique tool} are satisfied
    and $\{uv, uw\}^*$ is unambiguous, then $\dim_H\mathcal U>0$.
\end{cor}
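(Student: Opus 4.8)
The plan is to deduce the corollary directly from Lemma~\ref{thm:unique tool} by a standard symbolic-dynamics dimension estimate. First I would invoke the lemma: under the four stated hypotheses, every point $\pi(t_1t_2t_3\cdots)$ with $t_i \in \{uv, uw\}$ has a unique address, so the set
\[
E := \pi\bigl(\{uv, uw\}^{\mathbb N}\bigr)
\]
is contained in $\mathcal U$. It therefore suffices to show $\dim_H E > 0$. The key observation is that, by the assumption that $\{uv, uw\}^*$ is unambiguous, distinct infinite words over the two-letter alphabet $\{uv, uw\}$ produce distinct infinite $\{m,p\}$-sequences, hence (since these sequences are unique addresses) distinct points of $A$. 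So $\pi$ restricted to $\{uv, uw\}^{\mathbb N}$ is injective, and $E$ carries a natural Cantor structure: with $L := \ell + k = |uv|$ and $L' := \ell + n = |uw|$, the word $t_1\cdots t_j$ determines a cylinder of $A$ whose diameter is comparable to $\|M^{\,|t_1\cdots t_j|}\|$, i.e. decays exponentially.

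The main step is the lower bound on Hausdorff dimension. I would put the self-similar (Moran) covering structure on $E$: the two pieces corresponding to first letter $uv$ and first letter $uw$ are images of $E$ under the affine maps $S_0 = T_{uv}$ and $S_1 = T_{uw}$ (compositions of the $T_m, T_p$), which are contractions with $\|S_i(x) - S_i(y)\| \asymp \|M^{L_i}\|\,\|x-y\|$ where $L_0 = L$, $L_1 = L'$. Because the two images are disjoint (this is exactly unambiguity plus uniqueness of addresses, giving a positive gap by compactness), the standard mass-distribution / Moran-construction argument applies: assigning each of the two branches probability $1/2$ at each level produces a measure $\nu$ on $E$ with $\nu(\text{cylinder of generation } j) = 2^{-j}$ while the cylinder has diameter at least $c\,r^{\,m_j}$ for some $0 < r < 1$ (with $r$ governed by the largest singular value of $M$) and $m_j \le \max(L,L')\cdot j$. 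A Billingsley-type lemma then gives
\[
\dim_H E \;\ge\; \frac{\log 2}{\max(L, L')\,\log(1/r)} \;>\; 0.
\]

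I would then note that the disjointness of the two first-level cylinders is genuinely forced: if $\pi([uv\cdots]) \cap \pi([uw\cdots]) \ne \emptyset$ then a point would have two addresses, contradicting that all points of $E$ lie in $\mathcal U$; hence the pieces are disjoint compact sets and the separation constant $c > 0$ exists by compactness. This is the one place a little care is needed — one must make sure "unambiguous" is being used in the sense that the map $\{uv,uw\}^{\mathbb N} \to \{m,p\}^{\mathbb N}$ is injective, so that the $\{m,p\}$-sequences obtained are honestly distinct and the uniqueness conclusion of Lemma~\ref{thm:unique tool} can be applied to separate the branches. The rest is the routine Moran estimate.

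The hard part, such as it is, is purely bookkeeping: identifying the contraction ratios of $S_0, S_1$ with the correct powers of the singular values of $M$ (uniform over the three normal forms — real, Jordan, complex — since in each case $\|M^n\|$ decays at the geometric rate set by the spectral radius, up to a polynomial factor in the Jordan case, which does not affect the dimension), and confirming that the Moran construction's separation hypothesis is met. I do not expect any genuine obstacle: once Lemma~\ref{thm:unique tool} and the unambiguity hypothesis are in hand, positivity of $\dim_H \mathcal U$ is immediate from the presence of a bi-Lipschitz copy of a two-branch self-similar Cantor set inside $\mathcal U$.
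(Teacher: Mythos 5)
Your argument is correct and is in essence the route the paper itself takes: the paper's proof observes (deferring the details to \cite[Corollary~4.3]{HS}) that the closure of the shifts of $\{uv,uw\}^{\mathbb N}$ has positive topological entropy and that $\pi$ is injective on it, and this injectivity is exactly your combination of unambiguity (distinct block sequences give distinct $\{m,p\}$-sequences) with Lemma~\ref{thm:unique tool} (each such sequence is the unique address of its image). What you do differently is to make explicit the mechanism converting symbolic largeness into Hausdorff dimension: you realise $E=\pi(\{uv,uw\}^{\mathbb N})$ as the attractor of the two affine contractions $T_{uv}$ and $T_{uw}$, note that the two first-level pieces must be disjoint (otherwise either unambiguity fails or some point of $E$ has two addresses, contradicting the Lemma), get a positive separation by compactness, and run the Moran/mass-distribution estimate -- the step the paper outsources to \cite{HS}. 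Two points of care in your sketch: first, the separation has to be propagated to all generations, which uses lower Lipschitz bounds for $T_{uv},T_{uw}$, i.e.\ the smallest singular values of the relevant powers of $M$; these are positive precisely because non-degeneracy forces $M$ to be invertible. Second, and consequently, in your displayed lower bound the rate $r$ should be governed by the smallest singular value of $M$, not the largest (the largest singular value controls diameters from above and hence gives upper, not lower, bounds on dimension); also one should note $uv\neq uw$, which unambiguity with two genuinely distinct blocks presupposes. Neither point affects the conclusion, since only positivity of $\dim_H\mathcal U$ is claimed, so your proof is sound and simply more self-contained than the paper's.
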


We recall that $\{uv, uw\}$ is ambiguous if there exists
    two sequences $(t_1, t_2, t_3, \dots) \neq (s_1, s_2, s_3, \dots)$
    with $t_i, s_i \in \{uv, uw\}$ where
    $t_1 t_2 t_3 \dots = s_1 s_2 s_3 \dots$.
If no such sequence exists, then this language is unambiguous.
For example, $\{mpmp, mp\}^*$ would be ambiguous, whereas $\{m, pp\}^*$ would
    be unambiguous.

\begin{proof}
This is completely analogous to \cite[Corollary~4.3]{HS}.
We say that a language $\mathcal{L}$ has positive topological
    entropy if the size of the set of prefixes of length $n$ of $\mathcal{L}$
    grows exponentially in $n$.
In brief, if we consider closure of all the shifts
of sequences from $\{uv, uw\}^*$, then this set will clearly have positive topological entropy, and the injective
projection $\pi$ of this set will have positive Hausdorff dimension.
\end{proof}

\subsection{The mixed real case}

We first assume that $\la\neq\mu$. The case when they are equal is considered in subsection~\ref{sub:equal}
below.

\begin{prop}
Let $0 < \la < \mu < 1$.
The vertices of the convex hull of $A_{-\la, \mu}$ are given by
    $\pi((pm)^k p^\infty), \pi((mp)^k p^\infty), \pi((pm)^k m^\infty)$, and
    $\pi((mp)^k m^\infty)$, where $k\ge0$.
\end{prop}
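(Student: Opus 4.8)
The plan is to work in the explicit coordinates for the mixed real case, where $\pi(a_0a_1a_2\dots) = \bigl(\sum_{i\ge0} a_i(-\la)^i,\ \sum_{i\ge0} a_i\mu^i\bigr)$ with $a_i\in\{-1,+1\}$ (identifying the symbol $p$ with $+1$ and $m$ with $-1$). The convex hull of $A_{-\la,\mu}$ is a convex polygon (in fact a compact convex set, but since $A$ is the attractor of an affine IFS whose hull is determined by finitely many extreme directions we expect finitely many vertices), so it suffices to identify, for each direction $\theta\in[0,2\pi)$, the point of $A_{-\la,\mu}$ maximising the linear functional $v\mapsto\langle v,(\cos\theta,\sin\theta)\rangle$, and to check that the extreme points that arise are exactly the four stated families.

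First I would set up the optimisation digit-by-digit. Maximising $x\cos\theta + y\sin\theta = \sum_i a_i\bigl((-\la)^i\cos\theta + \mu^i\sin\theta\bigr)$ over $a_i\in\{\pm1\}$ is achieved greedily: $a_i = \operatorname{sign}\bigl((-\la)^i\cos\theta+\mu^i\sin\theta\bigr)$, provided none of these coefficients vanishes. Write $c_i(\theta) = (-\la)^i\cos\theta + \mu^i\sin\theta$. Because $0<\la<\mu<1$, the sequence $(c_i(\theta))_{i\ge0}$ behaves asymptotically like $\mu^i\sin\theta$ (when $\sin\theta\neq0$) or like $(-\la)^i\cos\theta$ (when $\sin\theta=0$), so its sign stabilises: for all large $i$ it equals $\operatorname{sign}(\sin\theta)$, or alternates as $(-1)^i\operatorname{sign}(\cos\theta)$ in the degenerate direction $\sin\theta=0$. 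Hence the optimal address is eventually constant equal to $p^\infty$ or $m^\infty$ (the alternating tail $(pm)^\infty$ or $(mp)^\infty$ only occurs in the two exceptional directions $\theta\in\{0,\pi\}$, and there one checks directly the supporting points are limits of the listed families, or handles that direction separately). The finitely many sign changes of $c_i(\theta)$ that can occur before stabilisation are governed by the ratios $(-\la/\mu)^i\cot\theta$ crossing $-1$; since $|-\la/\mu|<1$ this ratio is monotone in modulus, so at most one sign change happens among the indices of each parity — I would argue that the optimal prefix, before the constant tail sets in, must therefore be an alternating block $(pm)^k$ or $(mp)^k$ (a non-alternating prefix would force two sign agreements that the monotonicity of $c_i$ forbids). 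Combining the alternating prefix with the constant tail yields exactly $\pi((pm)^kp^\infty)$, $\pi((mp)^kp^\infty)$, $\pi((pm)^km^\infty)$, $\pi((mp)^km^\infty)$.

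Then I would show these points are genuinely vertices (extreme points), not merely exposed points or interior points of edges: for each such point one exhibits an open cone of directions $\theta$ for which it is the unique maximiser, using that at the finitely many indices where the greedy choice is ``free'' (i.e.\ $c_i(\theta)$ close to $0$) a small perturbation of $\theta$ still pins down the same digits and that two distinct points cannot maximise every functional in a full cone. Conversely, every vertex of the hull is a maximiser in some direction, so the list is complete. The main obstacle I anticipate is the careful bookkeeping of the sign pattern of $c_i(\theta)$ across parities — proving rigorously that only alternating prefixes $(pm)^k$, $(mp)^k$ can appear before stabilisation, and correctly handling the two boundary directions $\theta=0,\pi$ where $\cos\theta\neq0$, $\sin\theta=0$ and the tail is alternating rather than constant. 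This is where one must be precise about which of the four families is being approached and rule out extra vertices; everything else is the routine greedy-optimisation argument together with the contraction estimates that guarantee convergence of the series.
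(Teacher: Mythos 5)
Your argument is correct in substance, but it takes a genuinely different route from the paper. The paper \emph{guesses} the vertex set and then verifies it: by induction on $k$ it shows that each segment joining consecutive candidate vertices (e.g.\ from $\pi((pm)^k p^\infty)$ to $\pi((pm)^{k+1} p^\infty)$) is a support line for $A_{-\la,\mu}$, via explicit slope comparisons, and concludes that the union of these support segments bounds the hull. You instead \emph{derive} the vertex set from the support function: for each direction $\theta$ the linear functional splits digit-by-digit as $\sum_i a_i c_i(\theta)$ with $c_i(\theta)=(-\la)^i\cos\theta+\mu^i\sin\theta$, so maximizers are found greedily, and the monotonicity of $(\la/\mu)^i$ shows that within each parity class the sign of $c_i(\theta)$ changes at most once (and never in the parity where the two terms agree), forcing exactly the addresses $(pm)^kp^\infty$, $(mp)^kp^\infty$, $(pm)^km^\infty$, $(mp)^km^\infty$. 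Your approach buys more: the open cone of directions (e.g.\ $\rho^{2k+1}<\tan\theta<\rho^{2k-1}$ with $\rho=\la/\mu$ for $\pi((pm)^kp^\infty)$) shows each listed point is \emph{exposed}, and since an extreme point of the hull lies in $A$ and maximizes some direction, while the maximizer set in any direction has at most one free digit, completeness follows cleanly --- whereas the paper's proof leaves the ``union is a topological circle'' step to the reader. Two small blemishes to fix in a write-up: your opening claim that the hull is a polygon with finitely many vertices is false (the vertices accumulate at $\pi((pm)^\infty)$ and $\pi((mp)^\infty)$), though nothing later uses it; and in the exceptional directions $\theta\in\{0,\pi\}$ the unique maximizers are precisely $\pi((pm)^\infty)$ and $\pi((mp)^\infty)$, which are therefore exposed points not literally on the stated list --- an imprecision shared by the proposition as stated in the paper, and harmless for the way the result is used, but worth one explicit sentence rather than ``handle separately''.
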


\begin{proof}
It suffices to show that the lines
    from $\pi((pm)^k p^\infty)$ to $\pi((pm)^{k+1} p^\infty)$, and similarly
    from $\pi((mp)^k p^\infty)$ to  $\pi((mp)^{k+1} p^\infty)$, from
    $\pi((pm)^k m^\infty)$ to $\pi((pm)^{k+1} m^\infty)$, and from
    $\pi((mp)^k m^\infty)$ to $\pi((mp)^{k+1} m^\infty)$ are support lines for $A_{-\la, \mu}$ and
    that their union is homeomorphic to a circle.
We will do the first case only.
The other cases are similar.

We will proceed by induction.
Consider first the line from $\pi(p^\infty)$ to $\pi(pmp^\infty)$.
This will be in the direction $\pi(p^\infty) - \pi(pmp^\infty) = (2 \la, -2\mu)$, with slope $-\mu/\la$.
Consider now the line from $\pi(p^\infty)$ to any other point $(x,y)=\pi(a_0a_1\dots)\in A_{-\la,\mu}$.
This will have a direction of the form
    \begin{align*}
     \pi(p^\infty) - \pi(a_0 a_1 \dots) &=
    \left(\sum_{i=0}^\infty (1-a_i) (-\la)^i, \sum_{i=0}^\infty (1-a_i) \mu^i\right)\\
   &=
          \left(\sum_{i\text{ even}} (1-a_i) \la^i, \sum_{i\text{ even}} (1-a_i) \mu^i\right)   \\ &
          + \left(-\sum_{i\text{ odd}} (1-a_i) \la^i, \sum_{i\text{ odd}} (1-a_i) \mu^i\right).
\end{align*}
Clearly, no point in $A_{-\la,\mu}$
can have larger $y$-coordinate that $p^\infty$, whence the second coordinate
is always non-negative. If the first coordinate is positive as well, then we are done; so,
let us assume that it is negative.
We notice that the slope has the form:
\[
\frac{ \sum_{i\text{ even}} (1-a_i) \mu^i  + \sum_{i\text{ odd}} (1-a_i) \mu^i}
     { \sum_{i\text{ even}} (1-a_i) \la^i -\sum_{i\text{ odd}} (1-a_i) \la^i }
\le  -\frac{ \sum_{i\text{ odd}} (1-a_i) \mu^i} { \sum_{i\text{ odd}} (1-a_i) \la^i }
\]
(since $\la<\mu$). We want
\[
\frac{ \sum_{i\text{ odd}} (1-a_i) \mu^i} { \sum_{i\text{ odd}} (1-a_i) \la^i } \le -\frac{\mu}{\la}.
\]
Cross multiplying, this will occur if
\[
  \la \sum_{i\text{ odd}} (1-a_i) \mu^i
  \geq
  \mu \sum_{i\text{ odd}} (1-a_i) \la^i
\]
or, equivalently,
\[
  \sum_{i\text{ odd}} (1-a_i) \mu^{i-1}
  \geq
  \sum_{i\text{ odd}} (1-a_i) \la^{i-1}.
\]
This is clearly true, as $\la < \mu$. This proves the base case $k=0$.

\begin{figure}
\includegraphics[width=275pt,height=275pt]{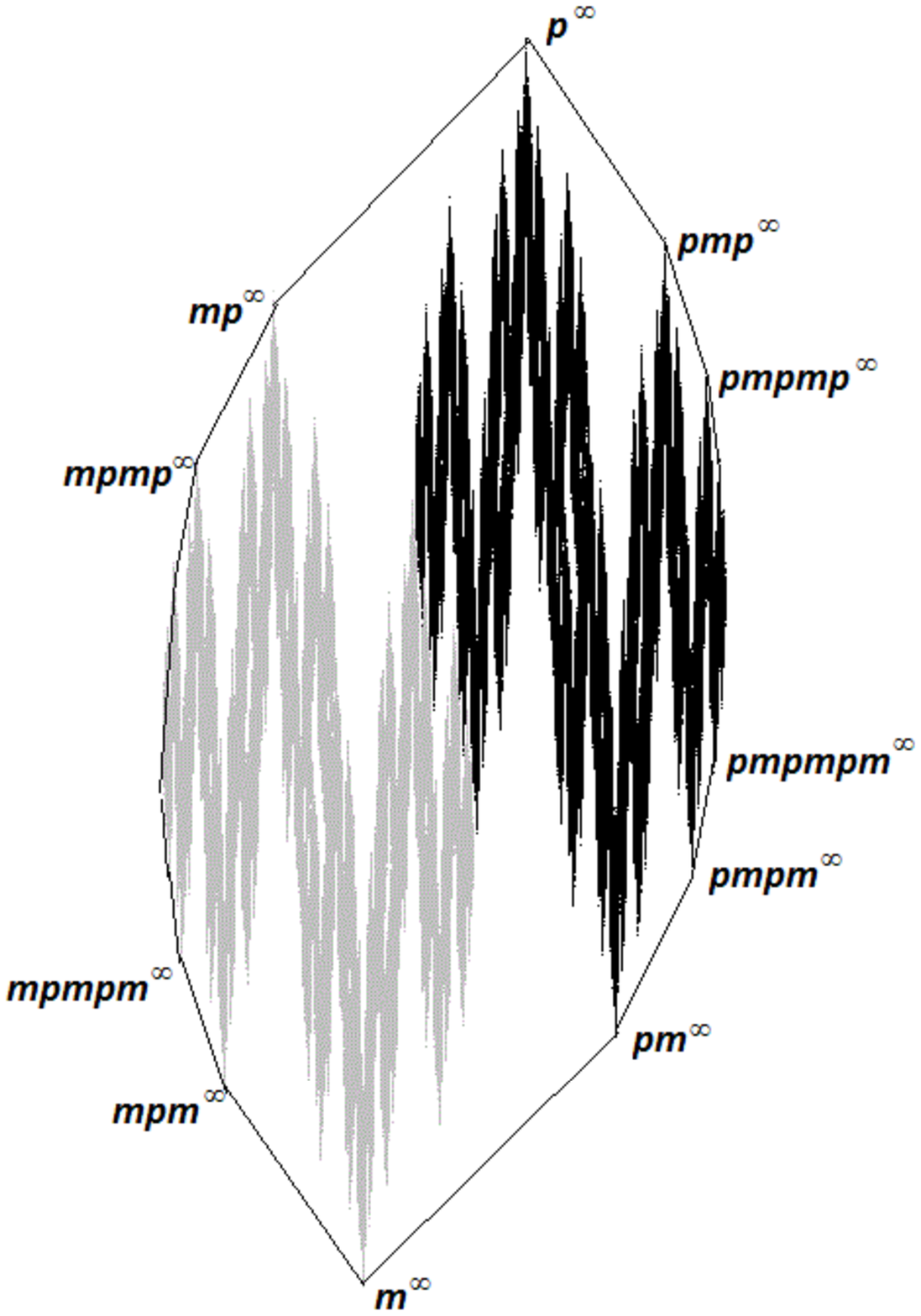}
\caption{Convex hull for $A_{-0.55, 0.8}$}
\label{fig:hull-mixed}
\end{figure}

Assume the line from
    $\pi((pm)^j p^\infty)$ to $\pi((pm)^{j+1} p^\infty)$ is a support hyperplane
for $A_{-\la,\mu}$ for all $j<k$. Consider the line from
    $\pi((pm)^k p^\infty)$ to $\pi((pm)^{k+1} p^\infty)$.
This will have slope $-\frac{\mu^k}{\la^k}$
Consider any $(x,y)=\pi(a_0a_1\dots) \in A_{-\la, \mu}$.
Note that without loss of generality we can assume that
$a_0a_1\dots a_{2k}=(pm)^k$, in view of the fact that the sequence
of slopes, $-\frac{\mu^k}{\la^k}$, is is a decreasing negative sequence, so
if $a_0\dots a_{2k}\neq (pm)^k$, then we can apply the inductive hypothesis
for some $j<k$.

As before, we see that the slope of this point is
\[
\frac{ \sum_{i\text{ even}} (1-a_i) \mu^i  + \sum_{i\text{ odd}} (\varepsilon_i-a_i) \mu^i}
     { \sum_{i\text{ even}} (1-a_i) \la^i -\sum_{i\text{ odd}} (\varepsilon_i-a_i) \la^i }
<  -\frac{ \sum_{i\text{ odd}} (\varepsilon_i-a_i) \mu^i} { \sum_{i\text{ odd}}(\varepsilon_i-a_i) \la^i },
\]
where $\varepsilon_i = -1$ if $i < 2k$ and $1$ otherwise.

We want
\[ \frac{ \sum_{i\text{ odd}} (\varepsilon_i-a_i) \mu^i} { \sum_{i\text{ odd}} (\varepsilon_i-a_i) \la^i }
    < -\frac{\mu^{2k}}{\la^{2k}}. \]
Cross multiplying, this will occur if
\[
  \la^{2k} \sum_{i\text{ odd}} (\varepsilon_i-a_i) \mu^i
  \geq
  \mu^{2k} \sum_{i\text{ odd}} (\varepsilon_i-a_i) \la^i
\]
or, equivalently,
\[
  \sum_{i\text{ odd}} (\varepsilon_i-a_i) \mu^{i-2k}
  \geq
  \sum_{i\text{ odd}} (\varepsilon_i-a_i) \la^{i-2k}.
\]
We see that $\la < \mu$ and hence $1/\mu < 1/\la$, from which it follows that
\[
\sum_{i\text{ odd}, i<2k} (\varepsilon_i-a_i) \mu^{i-2k} \geq \sum_{i\text{ odd}, i<2k} (\varepsilon_i-a_i) \la^{i-2k}
\]
and
\[
\sum_{i\text{ odd}, i>2k} (\varepsilon_i-a_i) \mu^{i-2k} \geq \sum_{i\text{ odd}, i>2k} (\varepsilon_i-a_i) \la^{i-2k}.
\]
Thus, we have shown that the line
    from $\pi((pm)^k p^\infty)$ to $\pi((pm)^{k+1} p^\infty)$ is a support line for 
    $A_{-\la,\mu}$ and that $A_{-\la,\mu}$ lies below it. The remaining three cases (see the
    beginning of the proof) are similar, and once it is established whether $A_{-\la,\mu}$ lies
    below or above these, the claim about their union being a topological circle becomes trivial. We
    leave the details to the reader.
\end{proof}

See Figure~\ref{fig:hull-mixed} for illustration.

\begin{prop}
There exists an $L$ such that for all $k_1, k_2 > 0$ we have
    $u = m p^L$, $v = p^{k_1}$ and $w = p^{k_2}$
    satisfy the conditions of Lemma~\ref{thm:unique tool}.
\end{prop}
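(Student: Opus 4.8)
The plan is to unwind the four conditions of Lemma~\ref{thm:unique tool} for the specific words $u=mp^L$, $v=p^{k_1}$, $w=p^{k_2}$, reduce them to two uniform families of ``a cylinder misses one of the two halves'' statements, and then verify those from the explicit description of $\mathrm{conv}(A_{-\la,\mu})$ just proved, together with an elementary estimate on $y$-coordinates. First I would list the relevant prefixes: every suffix of $u=mp^L$ is either $mp^L$ or a block $p^j$ with $1\le j\le L$; every suffix of $v=p^{k_1}$ and of $w=p^{k_2}$ is a block $p^j$; and the symbols $\overline{a_i},\overline{b_j},\overline{c_j}$ are always in $\{m,p\}$. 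Substituting these into the four bullet points and letting $k_1,k_2$ range over all positive integers, one checks that the conditions become, for the chosen $L$:
\begin{itemize}
\item[(I)] $\pi[mp^{a}mp^{L}]\cap\pi[p]=\emptyset$ for every $a\ge L+1$ (the $i=1$ case of the first and third bullets, with $a=L+k_1$ or $a=L+k_2$);
\item[(II)] $\pi[p^{s}mp^{L}]\cap\pi[m]=\emptyset$ for every $s\ge1$ (all remaining cases of the first and third bullets, together with the second and fourth).
\end{itemize}

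For (II): since $T_p^{L}(A)\to\{\pi(p^\infty)\}$ in the Hausdorff metric as $L\to\infty$, we have $\pi[p^{s}mp^{L}]=T_p^{s}T_m(T_p^{L}(A))\to\{\pi(p^{s}mp^\infty)\}$. The point $\pi(p^{s}mp^\infty)$ has $y$-coordinate $\sum_{i\ge0}\mu^i-2\mu^s=\frac1{1-\mu}-2\mu^s=\frac{2\mu-1}{1-\mu}+2(1-\mu^s)$, which for every $s\ge1$ is strictly larger than $\frac{2\mu-1}{1-\mu}$, the largest $y$-coordinate attained on $\pi[m]=T_m(A)$ (attained at $\pi(mp^\infty)$). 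Hence $\pi(p^{s}mp^\infty)$ lies in the open half-plane $\{(x,y):y>\frac{2\mu-1}{1-\mu}\}$, which is disjoint from the compact set $\pi[m]$, so for each fixed $s$ there is $L_s$ with $\pi[p^{s}mp^{L}]\cap\pi[m]=\emptyset$ whenever $L\ge L_s$. Only finitely many $s$ actually need this, because $\pi(p^\infty)\notin\pi[m]$ (again by $y$-coordinates, as $\frac1{1-\mu}>\frac{2\mu-1}{1-\mu}$) and $\pi[p^{s}]\to\{\pi(p^\infty)\}$, so there is $s_0$ with $\pi[p^{s}]\cap\pi[m]=\emptyset$, hence (II), for all $s\ge s_0$, irrespective of $L$.

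For (I) the same mechanism works once we know $\pi(mp^\infty)\notin\pi[p]$. Here $\pi(mp^\infty)=\pi((mp)^1p^\infty)$ is one of the vertices of $\mathrm{conv}(A_{-\la,\mu})$ listed in the previous proposition; carrying out the step ``the vertices of the convex hull have unique addresses'' exactly as in \cite{HS} shows that its only address is $mp^\infty$, which begins with $m$, so $\pi(mp^\infty)$ has no address beginning with $p$, i.e.\ $\pi(mp^\infty)\notin\pi[p]$. Since $\pi[mp^{a}]=T_mT_p^{a}(A)\to\{\pi(mp^\infty)\}$ and $\pi[p]$ is compact, there is $N_1$ with $\pi[mp^{a}]\cap\pi[p]=\emptyset$ for all $a\ge N_1$; and then (I) holds for \emph{all} $a\ge L+1$ as soon as $L\ge N_1-1$, because the built-in bound $a\ge L+1$ forces $a\ge N_1$. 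Finally, put $L=\max\{N_1-1,L_1,\dots,L_{s_0-1}\}$: this single $L$ satisfies (I) and (II) at once, hence all four conditions of Lemma~\ref{thm:unique tool} for every $k_1,k_2>0$.

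Two points carry the weight. The first is the (error-prone but routine) identification of (I)--(II) with the four bullet points — in particular noticing that the lower bound $a\ge L+1$ that is automatically present in the first and third bullets is precisely what lets family (I) hold for large $L$ with no ``short'' value of $a$ ever requiring separate treatment. The second, and the genuine obstacle, is the input $\pi(mp^\infty)\notin\pi[p]$: the cheap coordinate estimate that disposes of (II) is unavailable here, since for $\la,\mu$ close to $1$ the point $\pi(mp^\infty)$ sits inside the bounding box of $\pi[p]$ in \emph{both} coordinates, so one must genuinely use the convex-hull structure (that $\pi(mp^\infty)$ is an exposed point of $A_{-\la,\mu}$ with a unique address). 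If the ``vertices have unique addresses'' statement is not available in quite the form needed, the alternative is to show directly that $T_p^{-1}(\pi(mp^\infty))=\pi(p^\infty)-2M^{-1}u\notin A_{-\la,\mu}$, for instance by exhibiting a support line of $\mathrm{conv}(A_{-\la,\mu})$ separating this point from the attractor; that is the most delicate computation in the argument.
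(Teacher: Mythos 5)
Your proof is correct and follows essentially the same route as the paper: the same reduction of the four conditions to the two families $\pi[mp^{a}mp^{L}]\cap\pi[p]=\emptyset$ and $\pi[p^{s}mp^{L}]\cap\pi[m]=\emptyset$, the first handled via the uniqueness of the address of the vertex $\pi(mp^\infty)$ plus shrinking cylinders, the second via comparing second coordinates of $\pi(p^{s}mp^\infty)$ with the maximal second coordinate $e$ of $\pi[m]$. The only (cosmetic) difference is how uniformity in $s$ is obtained: you treat finitely many small $s$ separately and use $\pi[p^{s}]\cap\pi[m]=\emptyset$ for large $s$, whereas the paper fixes one $L_2$ and notes that the minimal second coordinate of $\pi[p^{i}mp^{L_2}]$ increases with $i$.
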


\begin{proof}
We claim that there exists an $L$ such that for all $k \geq 0$ and
    $1 \leq i \leq L + k$ we have
\begin{equation}\label{eq:emp1}
\pi[m p^{L+k} m p^L] \cap \pi[p]=\emptyset. 
          \end{equation}
and
\begin{equation}\label{eq:emp2}
          \pi[p^i m p^L] \cap \pi[m]=\emptyset. 
\end{equation}          
Consequently, using $u = m p^L$, $v = p^{k_1}$ and $w = p^{k_2}$  with
    $k_1, k_2 \geq 0$
    in Lemma~\ref{thm:unique tool} proves the result.

To prove (\ref{eq:emp1}), we observe that $\pi(m p^\infty)$ is a point of uniqueness.
Therefore, there exists an $L_1$ such $\pi[m p^{L_1}]$ will be disjoint from
   $\pi[p]$.

To establish (\ref{eq:emp2}), we observe that the point in $\pi[m]$ with
    maximal second coordinate is $\pi(mp^\infty)$.
Denote this maximal second coordinate by $e$.
We also observe that $\pi(p m p^\infty)$ has second coordinate strictly
    greater than $e$.
Hence there exists an $L_2$ such that
    the minimal second coordinate of $\pi[p m p^{L_1}]$ is greater than $e$.
By observing that the minimal second coordinate of $\pi[p^{i+1} m p^{L_2}]$
    is always greater than that of $\pi[p^{i} m p^{L_1}]$, we see that
    $\pi[p^i m p^{L_2}]$ is disjoint from $\pi[m]$ for all $i$.

Taking $L = \max(L_1, L_2)$ proves the claim.
\end{proof}

\begin{cor}
The set $\U_{-\mu, \la}$ has positive Hausdorff dimension.
\end{cor}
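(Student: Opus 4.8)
The plan is simply to assemble the two preceding propositions together with the Corollary that immediately follows Lemma~\ref{thm:unique tool}. Fix $L$ as provided by the second proposition and set $u = mp^L$, $v = p$ and $w = p^2$ (so $k_1 = 1$ and $k_2 = 2$ in the notation there). By that proposition the words $u,v,w$ satisfy the hypotheses of Lemma~\ref{thm:unique tool}, and hence every point $\pi(t_1 t_2 t_3\cdots)$ with $t_i\in\{uv,uw\}=\{mp^{L+1},mp^{L+2}\}$ has a unique address. Note that the supporting fact that points such as $\pi(mp^\infty)$ are points of uniqueness was already established in the course of proving that proposition, so nothing extra is needed here.

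Next I would check that $\{mp^{L+1},mp^{L+2}\}^*$ is unambiguous. The two generating blocks have distinct lengths $L+2$ and $L+3$, and within each of them the symbol $m$ occurs exactly once, at the front. Consequently, in any infinite concatenation of these blocks the occurrences of $m$ mark precisely the block boundaries, and the gap between two consecutive $m$'s, being either $L+1$ or $L+2$ copies of $p$, determines which block was used; thus the factorization of any element of $\{mp^{L+1},mp^{L+2}\}^*$ is unique. This is the same phenomenon as in the example $\{m,pp\}^*$ mentioned after Lemma~\ref{thm:unique tool}.

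With unambiguity in hand, the Corollary following Lemma~\ref{thm:unique tool} applies directly. The language $\{mp^{L+1},mp^{L+2}\}^*$ has positive topological entropy, since it contains $2^n$ words assembled from $n$ blocks and therefore exponentially many prefixes in the number of symbols; the closure of the set of its shifts is a subshift of positive entropy; and its image under $\pi$ is injective because all these sequences are uniqueness points, so $\pi$ separates them. Hence this image is a subset of the set of uniqueness of positive Hausdorff dimension, giving $\dim_H \U_{-\la,\mu} > 0$ (equivalently, for the set written $\U_{-\mu,\la}$ in the statement). I do not expect any genuine obstacle: the substantive work has already been carried out in the two propositions, and the only step needing a line of verification is the unambiguity of the code, which is immediate from the distinct block lengths.
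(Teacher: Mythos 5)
Your proposal is correct and follows essentially the same route the paper intends: combine the preceding proposition (with a concrete choice such as $k_1=1$, $k_2=2$) with the corollary after Lemma~\ref{thm:unique tool}, the only point needing verification being the unambiguity of $\{mp^{L+1},mp^{L+2}\}^*$, which you check correctly via the positions of the symbol $m$. The explicit entropy and injectivity remarks match the paper's (cited) argument, so there is no gap.
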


\subsection{The Jordan block case}


\begin{prop}
The vertices of the convex hull of $A_{\nu}$ are given by
    $\pi(m^k p^\infty)$, and $\pi(p^k m^\infty)$, where $k\ge0$.
\end{prop}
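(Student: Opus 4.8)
The plan is to mimic the structure of the proof for the mixed real case above, but now working with the Jordan block expansion from Lemma~\ref{lem:jordan-exp}. Recall that in the Jordan block case a point $\pi(a_0a_1\dots)$ has coordinates $\left(\sum_{i\ge1} i a_i \nu^{i-1},\ \sum_{i\ge0} a_i\nu^i\right)$, where $a_i\in\{m,p\}=\{-1,+1\}$. The second coordinate is the usual $\nu$-expansion, so it is maximised exactly at $\pi(p^\infty)$ and minimised at $\pi(m^\infty)$; these should be the ``top'' and ``bottom'' vertices. Since $0<\nu<1$, the sequence $p^\infty$ gives the lexicographically and numerically largest $y$, and prepending a block of $m$'s before switching to $p^\infty$ produces a monotone family of points $\pi(m^kp^\infty)$ whose $y$-coordinates decrease to $\pi(m^\infty)$; symmetrically for $\pi(p^km^\infty)$. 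So the claim is that the convex hull boundary consists of two monotone ``staircases'' of segments joining consecutive $\pi(m^kp^\infty)$ (one side) and consecutive $\pi(p^km^\infty)$ (the other side), together with the two extreme points.

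First I would compute the direction of the candidate support line joining $\pi(m^kp^\infty)$ to $\pi(m^{k+1}p^\infty)$: these two sequences differ only in coordinate $k$ (an $m$ versus a $p$), so the difference vector is, up to a positive scalar, $\pi(m^{k+1}p^\infty)-\pi(m^kp^\infty) = \big(2k\nu^{k-1},\ 2\nu^k\big)$, a vector of slope $\nu/k$ (pointing up and to the right). Then, exactly as in the mixed real proof, I would fix an arbitrary $(x,y)=\pi(a_0a_1\dots)\in A_\nu$ and show that relative to $\pi(m^kp^\infty)$ it lies on the correct side of this line. As in that proof one uses induction on $k$: because the slopes $\nu/k$ are decreasing in $k$, if the prefix $a_0\dots a_{k-1}$ is not $m^k$ one reduces to a smaller $j<k$ by the inductive hypothesis, so one may assume $a_0\dots a_{k-1}=m^k$. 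Writing $\varepsilon_i=-1$ for $i<k$ and $\varepsilon_i=+1$ for $i\ge k$ (so $\pi(m^kp^\infty)$ has digits $\varepsilon_i$) and setting $d_i=\varepsilon_i-a_i\ge 0$, the vector from $\pi(m^kp^\infty)$ to $(x,y)$ has coordinates $\left(-\sum_i i\,d_i\nu^{i-1},\ -\sum_i d_i\nu^i\right)$ with all $d_i\ge0$, and one must check that this vector makes an angle putting it below the line of slope $\nu/k$. After the base case (direct), the key inequality reduces — by cross-multiplying — to comparing $\sum_i i\,d_i\nu^{i-1}$ with $\tfrac{k}{\nu}\sum_i d_i\nu^i=\sum_i k\,d_i\nu^{i-1}$, i.e. to showing $\sum_i (i-k)d_i\nu^{i-1}\ge 0$ has the right sign on each of the ranges $i<k$ and $i>k$ separately, which holds because $d_i=0$ for $i<k$ (the prefix is $m^k$). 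So in fact the estimate becomes essentially trivial once the prefix is normalised, which is cleaner than the real case.

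The remaining ingredients are: (a) the symmetric computation for the $\pi(p^km^\infty)$ staircase, obtained from the map $a_i\mapsto -a_i$ which sends $A_\nu$ to $-A_\nu$ and swaps the two families; (b) checking that the union of all these segments plus the two limit points $\pi(p^\infty),\pi(m^\infty)$ is a topological circle — this is routine once one knows each staircase is a monotone convex arc converging to the appropriate endpoint, exactly as the authors defer to the reader in the mixed real case; and (c) confirming no other extreme points are hidden ``between'' the two staircases, which follows because every point of $A_\nu$ lies weakly below the upper staircase and weakly above the lower one, so the convex hull is pinched exactly onto these arcs. The main obstacle I anticipate is purely bookkeeping: getting the orientation conventions (which staircase is ``upper'', which inequality direction) consistent, and handling the degenerate slope at $k=0$ where the segment $\pi(p^\infty)$ to $\pi(mp^\infty)$ is vertical (direction $(0,2)$, since the $m$ in position $0$ contributes nothing to the first coordinate) — there one argues instead that every point of $A_\nu$ has first coordinate $\sum_{i\ge1} i a_i\nu^{i-1}$ bounded appropriately, i.e. the two initial vertical segments are the left and right edges. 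None of this requires new ideas beyond those already used for the mixed real case.
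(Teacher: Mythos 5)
Your proposal is correct and follows essentially the same route as the paper: induction over the decreasing slopes $\nu/k$, normalising the prefix so that all digit differences $d_i$ are nonnegative, and reducing the support-line condition to $\sum_i (i-k)d_i\nu^{i-1}\ge 0$, which is exactly the mediant/cross-multiplication inequality the paper uses (for the mirror family $p^km^\infty$, via a translated coordinate $\wpi$). The only blemish is a harmless sign slip: $\pi(m^{k+1}p^\infty)-\pi(m^kp^\infty)=(-2k\nu^{k-1},-2\nu^k)$, a \emph{negative} multiple of $(2k\nu^{k-1},2\nu^k)$, which changes nothing since only the slope of the segment matters.
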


\begin{proof} Recall that $\pi(a_0 a_1 a_2 \dots) = (\sum i a_i \nu^{i-1}, \sum a_i \nu^i)$.
Consider the map taking an address $a_0 a_1\dots$ to
    $(x+y, y)$, because it will simplify our argument. Thus, we have
\[
\wpi(a_0a_1\dots)=\left(\sum_{i=1}^\infty i(a_i+1)\nu^{i-1},\sum_{i=0}^\infty a_i\nu^i\right).
\]
Note first that
\[
\wpi(p m^\infty) - \wpi(m^\infty) = (0,2)
\]
and for $w=a_0a_1\dots$,
\[
\wpi(w) - \wpi(m^\infty) =
\left(\sum_{i=1}^\infty i (a_i+1)\nu^{i-1}, \sum_{i=0}^\infty (a_i+1)\nu^i\right).
\]
We notice that the first coordinate of $\wpi(w)-\wpi(m^\infty)$ is clearly nonnegative,
which is enough to prove that $w$ is to the right of the vertical
    line from $\wpi(m^\infty)$ to $\wpi(p m^\infty)$.

Proceed by induction and assume that for all $j<k$, the straight line passing through $\wpi(p^jm^\infty)$ and
$\wpi(p^{j+1}m^\infty)$ is a support hyperplane for $A_\nu$ which lies to the left of the attractor
-- see Figure~\ref{fig:jordan-hull}.

\begin{figure}
\includegraphics[width=300pt]{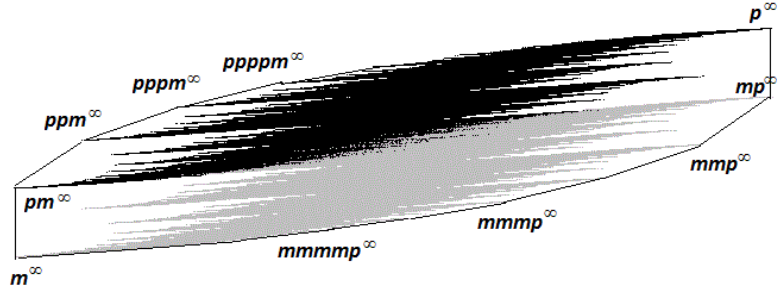}
\caption{Convex hull for $A_{0.7}$}
\label{fig:jordan-hull}
\end{figure}

Consider now the case $j=k$; we have
\[
\wpi(p^{k+1} m^\infty) - \wpi(p^k m^\infty) = (2 k\nu^{k-1},2 \nu^k).
\]
This sequence has the slopes $\nu/k$, which is clearly decreasing. Thus,
we can assume that $a_i\equiv p,\ 0\le i\le k-1$, otherwise we appeal to a case $j<k$.
We see that the desired result is true if the slope of  $\wpi(w) - \wpi(p^k m^\infty)$
is less than or equal to $\nu/k$. After simplifying, this is equivalent to
\[
\frac{\sum_{i=k+1}^\infty (a_i+1)\nu^{i-k}}{\sum_{i=k+1}^\infty i(a_i+1)\nu^{i-k-1}}
\le \frac{\nu}k,
\]
which is clearly true, since $i>k$.
\end{proof}

The following claim is trivial.

\begin{lemma}\label{lem:uniq-general}
There exists an $L$ such that for all $k_1, k_2 \geq L$ we have
    $u = m^{k_1}$, $v = p^{k_1}$ and $w = p^{k_2}$ satisfy the
    conditions of Theorem~\ref{thm:unique tool}.
\end{lemma}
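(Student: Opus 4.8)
The plan is to verify directly, for a suitable $L$, the four bulleted hypotheses of Lemma~\ref{thm:unique tool} with $u=m^{k_1}$, $v=p^{k_1}$, $w=p^{k_2}$, using the coordinate formula $\pi(a_0a_1\dots)=\bigl(\sum_i i a_i\nu^{i-1},\ \sum_i a_i\nu^i\bigr)$ recorded earlier. Because $u,v,w$ are all constant words, the four conditions collapse to just two families of disjointness statements: the first and third ask that
\[
\pi[m^t p^k m^{k_1}]\cap\pi[p]=\emptyset\qquad(1\le t\le k_1,\ k\in\{k_1,k_2\}),
\]
and the second and fourth ask that
\[
\pi[p^s m^{k_1}]\cap\pi[m]=\emptyset\qquad(1\le s\le\max(k_1,k_2)).
\]
Since $\pi[m^t p^k m^{k_1}]\subseteq\pi[m^t p^k]$, it is enough to produce one $L$ with the properties that $\pi[m^t p^k]\cap\pi[p]=\emptyset$ whenever $t\ge1$ and $k\ge L$, and $\pi[p^s m^k]\cap\pi[m]=\emptyset$ whenever $s\ge1$ and $k\ge L$; we then apply these with $k=k_1$ and $k=k_2$, which are $\ge L$ by hypothesis.

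Both claims rest on the same dichotomy. Let $L_0$ be the least positive integer with $\nu^{L_0}<1-\nu$. If $t\ge L_0$ then every point of $\pi[m^t]$ has second coordinate at most $\tfrac{2\nu^t-1}{1-\nu}$ (the value at $\pi(m^t p^\infty)$), while every point of $\pi[p]$ has second coordinate at least $\tfrac{1-2\nu}{1-\nu}$ (the value at $\pi(p m^\infty)$); since $\nu^t<1-\nu$ these ranges are disjoint, so already $\pi[m^t]\cap\pi[p]=\emptyset$ for every $k$. There remain only the finitely many $t$ with $1\le t<L_0$. For such $t$ the cylinders $[m^t p^k]$ decrease, as $k\to\infty$, to the single sequence $m^t p^\infty$, and $\pi(m^t p^\infty)$ is a vertex of the convex hull of $A_\nu$ by the preceding proposition, hence a point of uniqueness; as its only address starts with $m$, it does not lie in $\pi[p]$, and compactness of $\pi[p]$ gives a $k_t$ with $\pi[m^t p^{k_t}]\cap\pi[p]=\emptyset$, hence the same for all $k\ge k_t$. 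So the first claim holds with $L'=\max(\{1\}\cup\{k_t:1\le t<L_0\})$; the second claim, about $\pi[p^s m^k]\cap\pi[m]$, is obtained identically — comparing the minimal second coordinate $\tfrac{1-2\nu^s}{1-\nu}$ of $\pi[p^s]$ with the maximal second coordinate $\tfrac{2\nu-1}{1-\nu}$ of $\pi[m]$ for $s\ge L_0$, and using the vertices $\pi(p^s m^\infty)$ for the finitely many small $s$ — yielding some $L''$. Take $L=\max(L',L'')$.

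With this $L$ and $k_1,k_2\ge L$ all four conditions follow: the first and third because $\pi[m^t p^{k_1} m^{k_1}]$ and $\pi[m^t p^{k_2} m^{k_1}]$ lie inside $\pi[m^t p^{k_1}]$ and $\pi[m^t p^{k_2}]$, each disjoint from $\pi[p]$; the second and fourth because $\pi[p^s m^{k_1}]$ is disjoint from $\pi[m]$ for every $s\ge1$. This is all bookkeeping — which is exactly why the claim is called trivial. The only points that need a moment's thought are the split at $L_0$, which separates the ``long prefix'' cylinders $[m^t]$, $[p^s]$ (handled by the one-line second-coordinate estimate) from the finitely many short ones (handled by the soft fact that a shrinking nest of cylinders converges to a point of uniqueness), and the external input that the convex-hull vertices $\pi(m^t p^\infty)$, $\pi(p^s m^\infty)$ are points of uniqueness; should the latter not already be on record, one proves it first, starting from the fact that $\pi(p^\infty)$ and $\pi(m^\infty)$ are the unique points of $A_\nu$ of maximal and minimal second coordinate.
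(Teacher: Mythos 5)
Your reduction of the four hypotheses of Lemma~\ref{thm:unique tool} to the two families $\pi[m^t p^k m^{k_1}]\cap\pi[p]=\emptyset$ and $\pi[p^s m^{k_1}]\cap\pi[m]=\emptyset$ is exactly right, and your dichotomy (second-coordinate estimate $\nu^t<1-\nu$ for long constant prefixes; nested-cylinder compactness around $\pi(m^tp^\infty)$, resp.\ $\pi(p^sm^\infty)$, for the finitely many short ones) is the natural way to make precise what the paper dismisses as ``trivial''; it also mirrors the paper's own argument for the analogous proposition in the mixed real case. The arithmetic ($\frac{2\nu^t-1}{1-\nu}$ versus $\frac{1-2\nu}{1-\nu}$, and the symmetric pair) checks out, and the final assembly $L=\max(L',L'')$ is correct.

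The one genuine soft spot is the ``external input'' that $\pi(m^tp^\infty)\notin\pi[p]$ (and $\pi(p^sm^\infty)\notin\pi[m]$) for the short prefixes $1\le t<L_0$. You derive it from ``vertex of the convex hull $\Rightarrow$ point of uniqueness'', but the preceding proposition only identifies the vertices; the uniqueness of their addresses is announced in the paper's general outline and never actually proved in the Jordan block subsection, so it is not ``on record''. Moreover, your fallback -- starting from the fact that $\pi(p^\infty)$ and $\pi(m^\infty)$ are the unique second-coordinate extremizers -- does not reach the vertices with $t\ge1$, which are not second-coordinate extremizers, so as written it would not close the gap. Two quick ways to close it: (a) each vertex $\pi(m^tp^\infty)$ is an \emph{exposed} point, i.e.\ it uniquely maximizes a functional $(x,y)\mapsto\alpha x+\beta y$ for which $\mathrm{sign}(\alpha i+\beta\nu)$ is negative for $i<t$, positive for $i\ge t$ and never zero; since $\alpha x+\beta y$ evaluated on $\pi(a_0a_1\dots)$ equals $\sum_i a_i\nu^{i-1}(\alpha i+\beta\nu)$, every address of that point is forced to be $m^tp^\infty$, which gives uniqueness (hence no address starting with $p$). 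Or (b) directly: if $\pi(m^tp^\infty)=\pi(pb_1b_2\dots)$, the difference of digit sequences gives $g(x)=\sum_i d_ix^i$ with $d_0=-1$, $d_i\le0$ for $i<t$, $d_i\ge0$ for $i\ge t$, and $g(\nu)=g'(\nu)=0$; writing $g=k-h$ with $h,k$ the negative and positive parts, $g(\nu)=0$ gives $k(\nu)=h(\nu)\ge1$, while $k'(\nu)\ge\frac t\nu k(\nu)$ and $h'(\nu)\le\frac{t-1}\nu(h(\nu)-1)$ force $g'(\nu)>0$, a contradiction. With either patch your argument is complete; note also that only this weaker disjointness statement, not full uniqueness of the vertices, is needed for your compactness step.
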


\begin{cor}
The set of uniqueness $A_\nu$ has positive Hausdorff dimension.
\end{cor}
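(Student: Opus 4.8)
The plan is to obtain the corollary exactly as in the mixed real case, by feeding the words produced by Lemma~\ref{lem:uniq-general} into the corollary that follows Lemma~\ref{thm:unique tool}. First I would fix the constant $L$ given by Lemma~\ref{lem:uniq-general} and choose two \emph{distinct} integers $k_1\neq k_2$ with $k_1,k_2\ge L$ (for concreteness $k_1=L$ and $k_2=L+1$). Then $u=m^{k_1}$, $v=p^{k_1}$, $w=p^{k_2}$ satisfy the hypotheses of Lemma~\ref{thm:unique tool}, so every point $\pi(t_1t_2t_3\cdots)$ with $t_i\in\{uv,uw\}=\{m^{k_1}p^{k_1},\,m^{k_1}p^{k_2}\}$ has a unique address.

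Next I would verify that the language $\{uv,uw\}^*$ is unambiguous, which is precisely what is needed to invoke the corollary to Lemma~\ref{thm:unique tool}. This is immediate from the shape of the two blocks: in any concatenation of $m^{k_1}p^{k_1}$ and $m^{k_1}p^{k_2}$ every maximal run of $m$'s has length exactly $k_1$ (each block begins with $m^{k_1}$ and ends with a nonempty run of $p$'s), so the factorization is recovered greedily -- read one $m$-run, then the $p$-run following it, and decide which block it came from by the length of that $p$-run; since $k_1\neq k_2$ this decision is forced. (Equivalently, $uv$ and $uw$ do not commute, hence form a code.) Consequently $\{uv,uw\}^*$ has positive topological entropy and, by the corollary to Lemma~\ref{thm:unique tool}, $\dim_H\U_\nu>0$. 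Finally, for $\nu<0$ I would use the reflection $A_\nu\leftrightarrow A_{-\nu}$ about the $y$-axis recorded in Section~\ref{sec:notation}, which carries $\U_\nu$ onto $\U_{-\nu}$, so the conclusion holds for every $\nu$ with $0<|\nu|<1$.

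I do not anticipate a real obstacle at the level of the corollary itself: the only step with genuine content is Lemma~\ref{lem:uniq-general}, and there the point is that $\pi(m^\infty)$ and $\pi(p^\infty)$ -- being the convex-hull vertices that realise the extremal second coordinate, by the preceding proposition -- have unique addresses, so the cylinders $\pi[mp^{L}]$ and $\pi[p^{i}mp^{L}]$ eventually recede from $\pi[p]$ and $\pi[m]$ respectively, a routine continuity-and-compactness argument that the paper already flags as trivial. Hence the proof of the corollary reduces to the short combinatorial bookkeeping above.
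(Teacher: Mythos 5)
Your proposal is correct and follows exactly the route the paper intends: apply Lemma~\ref{lem:uniq-general} with two distinct exponents $k_1\neq k_2\ge L$, observe that $\{m^{k_1}p^{k_1},m^{k_1}p^{k_2}\}^*$ is unambiguous, and invoke the corollary to Lemma~\ref{thm:unique tool} (with the symmetry of Section~\ref{sec:notation} handling $\nu<0$). The paper leaves these steps implicit, and your bookkeeping fills them in correctly.
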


\subsection{The complex case}

For each $\phi \in [0, 2 \pi)$ define $p_\phi:\BbC \to \BbR$ by
    $p_\phi(z) = \Re(z \e^{-i \phi})$.
This measures the distance of $z$ in the $\e^{i \phi}$ direction.
We define the set $Z_\phi$ as those $z \in A$ such that
    $p_\phi(z)$ is maximized.
We note that this set is well defined as $A$ is a compact set.
If points $z \in Z_\phi$ then
    $z = \pi(a_1 a_2 a_3 \dots) = \sum_{j=0}^\infty a_{j}^{(\phi)} \ka^j$, where
   \[ a_j^{(\phi)} = \left\{
    \begin{array}{ll}
    -1 & \mathrm{if}\ \Im(\ka^j \e^{i \phi}) < 0 \\
    +1 & \mathrm{if}\ \Im(\ka^j \e^{i \phi}) > 0 \\
    -1\ \mathrm{or}\ +1 & \mathrm{if}\ \Im(\ka^j \e^{i \phi}) = 0 \end{array} \right.
    \ \ \mathrm{for\ all}\ j.\]
These have been studied in \cite[Sections~5--7]{Mess} in the cases of the Rauzy fractal and the twin dragon
curve (see Examples~\ref{ex:twindragon} and \ref{ex:rauzy} above).

We will distinguish two cases, depending on whether $\arg(\ka)/\pi$ is irrational or rational.

\subsubsection{Case 1 -- irrational}

Let $\mathcal {E}_\phi = \{(a_1 a_2 a_3 \dots) \mid \pi(a_1 a_2 a_3 \dots) \in Z_\phi\}$.
We see that $|\mathcal{E}_\phi| = 1$ or $2$, as there is at most one $j$ where $\Im(\ka^j \e^{i \phi}) = 0$.
All points $z \in \Z_\phi$ are points of uniqueness.

Let $\bar{\mathcal{E}}_\phi$ denote the closure of the orbit of $\mathcal{E}_\phi$
under the shift transformation. Notice that any $z\in\pi(\bar{\mathcal{E}}_\phi)$ has
a unique address, since for any $w\in A_\ka$ we have $\Im (w e^{i\phi})\le \Im (z e^{i\phi})$,
with the equality if and only if $w\in \bar{\mathcal{E}}_\phi$ (whose elements are all
distinct).

\begin{prop}\label{prop:Ephi}
Put $\mathcal E=\bigcup_{\phi} \bar{\mathcal{E}}_\phi$. We have

\begin{itemize}
\item $\mathcal{E}$ is closed under the standard product topology.
\item $\mathcal{E}$ is uncountable.
\item $\mathcal{E}$ is a shift-invariant.
\item For each $(a_i)$ in $\mathcal{E}$ we have that $(a_i)$ is recurrent.
\item The image $\pi(\mathcal E)$ is a closed compact subset of $A_\kappa$.
\end{itemize}
\end{prop}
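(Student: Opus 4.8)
The plan is to translate everything into the combinatorial picture of $\mathcal E_\phi$ as a set of ``sign sequences'' and then to get away with a single compactness argument. Write $\theta=\arg\ka$, so that $\theta/\pi\notin\BbQ$, and for $\phi\in\BbR/2\pi\BbZ$ put $s(\phi)=(s_j(\phi))_{j\ge0}$ with $s_j(\phi)=\operatorname{sign}\sin(j\theta+\phi)$; the one coordinate $j$ for which $\sin(j\theta+\phi)=0$ (there is at most one, by irrationality of $\theta/\pi$) is left undetermined and may be resolved either way. By the description of $\mathcal E_\phi$ recalled above, $\mathcal E_\phi$ is precisely the set of the one or two sequences obtained this way. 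The identity I will lean on is that a shift of a sign sequence is again one, with the angle rotated: since $\ka^{j+1}\e^{i\phi}=|\ka|\,\ka^{j}\e^{i(\phi+\theta)}$ and the sign of the imaginary part ignores the positive factor $|\ka|$, we get $s_{j}(\phi+\theta)=s_{j+1}(\phi)$, whence $\sigma(\mathcal E_\phi)=\mathcal E_{\phi+\theta}$ — the inclusion ``$\subseteq$'' from this identity, and ``$\supseteq$'' by prepending the symbol $\operatorname{sign}\sin\phi$ (with undetermined coordinates matched up).

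The heart of the matter is closedness. Set $C=\bigcup_\phi\mathcal E_\phi$; I claim $C$ is closed. Suppose $w^{(n)}\in\mathcal E_{\phi_n}$ and $w^{(n)}\to w$; passing to a subsequence, $\phi_n\to\phi$ since the circle of directions is compact. As $p_{\phi_n}\to p_\phi$ uniformly on the compact set $A_\ka$ and $\pi(w^{(n)})\to\pi(w)$ by continuity of $\pi$, we get
\[
 p_\phi(\pi(w))=\lim_n p_{\phi_n}(\pi(w^{(n)}))=\lim_n\ \max_{A_\ka}p_{\phi_n}=\max_{A_\ka}p_\phi,
\]
so $\pi(w)\in Z_\phi$, i.e.\ $w\in\mathcal E_\phi\subseteq C$. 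Now each orbit closure obeys $\bar{\mathcal E}_\phi=\overline{\bigcup_{n\ge0}\sigma^n\mathcal E_\phi}=\overline{\bigcup_{n\ge0}\mathcal E_{\phi+n\theta}}\subseteq\overline C=C$, while $C\subseteq\bigcup_\phi\bar{\mathcal E}_\phi$ is trivial; hence $\mathcal E=C$, which is closed. (Equivalently $C=\pi^{-1}\bigl(A_\ka\cap\partial(\operatorname{conv}A_\ka)\bigr)$: a point of $A_\ka$ maximises some $p_\phi$ over $A_\ka$ exactly when it lies on the boundary of the convex hull.) In particular $\mathcal E$ is a closed subset of the compact space $\{m,p\}^{\BbN}$, so it is compact, and therefore its continuous image $\pi(\mathcal E)$ is a compact — hence closed — subset of $A_\ka$. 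This settles the first and last bullets.

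Shift-invariance is then formal: $\sigma(\mathcal E)=\sigma(C)=\bigcup_\phi\sigma(\mathcal E_\phi)=\bigcup_\phi\mathcal E_{\phi+\theta}=C=\mathcal E$ (or: each $\bar{\mathcal E}_\phi$ is a forward-orbit closure, hence forward-invariant, so so is their union). For uncountability, note that $\phi$ has an undetermined coordinate only when $\phi\equiv-j\theta\pmod\pi$ for some $j\ge0$, i.e.\ for countably many $\phi$; on the complement $\phi\mapsto s(\phi)$ is injective, since for $\phi_1\neq\phi_2$ the minimality of the rotation by $\theta$ produces a $j$ with $j\theta+\phi_1$ just inside one of the half-circles $(0,\pi),(\pi,2\pi)$ and $j\theta+\phi_2$ just inside the other, so $s_j(\phi_1)\neq s_j(\phi_2)$. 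Thus $\mathcal E$ contains uncountably many distinct sequences. Finally, recurrence: given $(a_i)=s(\phi)\in\mathcal E$ and $N\in\BbN$, pick $\varepsilon>0$ with $|\sin(i\theta+\phi)|>\varepsilon$ for every determined $i\le N$; by minimality of the rotation there are infinitely many $n\ge1$ with $n\theta$ so close to $0$ modulo $2\pi$ — approached, since $0$ can be approached from either side, from whichever side matches the resolved undetermined coordinate if that coordinate lies in $\{0,\dots,N\}$ — that $\operatorname{sign}\sin((i+n)\theta+\phi)=\operatorname{sign}\sin(i\theta+\phi)$ for all $i\le N$. Hence $\sigma^n(a_i)$ agrees with $(a_i)$ on $\{0,\dots,N\}$ for infinitely many $n$, which is exactly recurrence of $(a_i)$ under the shift.

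The one genuinely non-formal step is the closedness claim: a union of the finite sets $\mathcal E_\phi$ over the uncountable parameter $\phi$ is not closed for free, and one must see both that the orbit closures $\bar{\mathcal E}_\phi$ add nothing beyond $\bigcup_\phi\mathcal E_\phi$ and that this latter set is closed. The mechanism behind the second point is the uniform convergence $p_{\phi_n}\to p_\phi$ on the compact attractor, which transports extremality of $\pi(w^{(n)})$ for $p_{\phi_n}$ over to extremality of $\pi(w)$ for $p_\phi$; once this is in hand the remaining assertions are routine manipulation with the irrational rotation $\phi\mapsto\phi+\theta$ and its minimality.
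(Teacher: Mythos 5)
Your proposal is correct and follows essentially the same route as the paper: both rest on identifying the sequences in $\mathcal E_\phi$ with coding (hitting) sequences of the irrational rotation by $\arg\ka$, from which recurrence follows by minimality/unique ergodicity and the shift identity $\sigma(\mathcal E_\phi)=\mathcal E_{\phi+\theta}$. The only difference is that you spell out the parts the paper dismisses as obvious — notably closedness, via the identification $\mathcal E=\bigcup_\phi\mathcal E_\phi$ and the passage of extremality to limits under uniform convergence of $p_{\phi_n}$, and uncountability via injectivity of $\phi\mapsto(a_j^{(\phi)})$ off a countable set — and these added details are sound.
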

\begin{proof}Notice that $(a_j^{(\phi)})_0^\infty$ is closely related to the irrational rotation of the circle
$\mathbb R/\mathbb Z$ by $\arg(\ka)/2\pi$, namely,
\[
a_j^{(\phi)}=
\begin{cases}
+1, & j\arg(\ka)/2\pi\in\left(-\frac{\phi}{2\pi}-\frac14,-\frac{\phi}{2\pi}+\frac14\right)\bmod1,\\
-1, & j\arg(\ka)/2\pi\in\left(-\frac{\phi}{2\pi}+\frac14,-\frac{\phi}{2\pi}-\frac14\right)\bmod1,\\
+1\text{ or } -1, & \text{otherwise}
\end{cases}
\]
(the third case can only occur for one $j$).
In other words, each $(a_j^{(\phi)})$ is a hitting sequence for some semi-circle. Since
our rotation is irrational, it is uniquely ergodic, whence $\bar{\mathcal{E}}_\phi$ is recurrent. The remaining
properties are obvious.
\end{proof}

\begin{rmk}The sequences $(a_j^{(\phi)})$ are known to have subword complexity~$2n$ (for
$n$ large enough). Such sequences are studied in detail in \cite{Rote}.
In particular, $\pi(\mathcal E_\phi)$ has zero Hausdorff dimension for all $\phi$.
\end{rmk}

By the last property in Proposition~\ref{prop:Ephi}, there exists $d > 0$ such that
    \[ \mathrm{dist}(\pi(\mathcal{E} \cap [m]), \pi[p]) > d \]
and
    \[ \mathrm{dist}(\pi(\mathcal{E} \cap [p]), \pi[m]) > d. \]
By taking $K$ such that $\frac{|\kappa|^{K+1}}{1-|\kappa|} < d$, we observe that
    for all $(a_i) \in \mathcal{E}$,
    \[
    \pi [a_0 a_1 \dots a_K] \cap \pi [\overline{a_0}] = \emptyset.
    \]
As the sequence is recurrent, for any $(a_i) \in \mathcal{E}$ there will exist two
    subwords of length~$L > K$ of the form
    $b_1 b_2 \dots b_L b_{L+1}$ and
    $b_1 b_2 \dots b_L \overline{b_{L+1}}$.
(If two such words did not exist, then the sequence would necessarily be periodic.)
Since this sequence is recurrent, there exist $c_1 \dots c_m$ and
    $d_1 \dots d_n$ such that
    \[
    b_1 b_2 \dots b_L b_{L+1} c_1 \dots c_m b_1 b_2 \dots b_L
    \]
    and
    \[ b_1 b_2 \dots b_L \overline{b_{L+1}} d_1 \dots d_n b_1 \dots b_L\]
    are both subwords of $(a_i)$.

It is easy to see that $u = b_1 \dots b_L$, $v = b_{L+1} c_1 \dots c_m$ and
                       $w = \overline{b_{L+1}} d_1 \dots d_n$
    satisfy the conditions of Lemma~\ref{thm:unique tool}, from which it follows that the
    images of $\{uv, uw\}^*$ will all have unique address.
As this set has positive topological entropy, we have that the set of uniqueness
    has positive Hausdorff dimension.

\begin{rmk} Thus, in this case the points $z_\phi$ are all points of uniqueness.
Furthermore, they are the vertices of the convex hull of $A_\ka$. The proof
is essentially the same as that of \cite[Th\'eor\`eme~7]{Mess}, so we omit it.
\end{rmk}

\subsubsection{Case 2 -- rational}
\label{sec:complex rational unique}

Let now $\ka = \rho \e^{2\pi i p/q}$ with $(p,q)=1$.
Put
\[
q'=\begin{cases}
q,& q\ \text{odd},\\
q/2,& q\ \text{even}
\end{cases}
\]
and
\begin{equation}\label{eq:beta}
\be=\rho^{-q'}>1.
\end{equation}

\begin{lemma}
If $\be\le2$, then $A_\ka$ is a convex polygon.
\end{lemma}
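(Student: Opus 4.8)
The plan is to exploit the fact that, although $\ka$ is genuinely complex, $\ka^{q'}$ is \emph{real}: indeed $\ka^{q'}=\rho^{q'}\e^{2\pi i pq'/q}$, and by the definition of $q'$ (using that $\gcd(p,q)=1$ forces $p$ odd whenever $q$ is even) the exponent $pq'/q$ is an integer or a half-integer with odd numerator, so $\ka^{q'}=\pm\rho^{q'}=\pm1/\be$. Put $\al:=1/\be=\rho^{q'}$; the hypothesis $\be\le2$ is exactly $\al\ge\tfrac12$, and $\be>1$ gives $\al\in[\tfrac12,1)$. First I would pass to the $q'$-fold iterate of the IFS: composing $T_m,T_p$ in all $2^{q'}$ ways shows $A_\ka=\bigcup_{s\in S}(\ka^{q'}A_\ka+s)$, where $S:=\{\sum_{i=0}^{q'-1}\epsilon_i\ka^i:\epsilon_i\in\{-1,+1\}\}$ is finite. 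Since flipping every digit gives $A_\ka=-A_\ka$, we have $\ka^{q'}A_\ka=\pm\al A_\ka=\al A_\ka$ as a set, so $A_\ka$ is in fact the attractor of the homogeneous IFS $\{z\mapsto\al z+s:s\in S\}$, whose ratio $\al$ is now positive and $<1$.

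Next I would compute $Q:=\mathrm{conv}(A_\ka)$. Taking convex hulls in the identity above, and using $\mathrm{conv}(X+Y)=\mathrm{conv}(X)+\mathrm{conv}(Y)$, gives $Q=\al Q+C$ with $C:=\mathrm{conv}(S)=\sum_{i=0}^{q'-1}[-\ka^i,\ka^i]$, a Minkowski sum of segments. Since the complex case forces $q\ge3$, hence $q'\ge2$, and $1$ and $\ka$ are not parallel, $C$ has nonempty interior: it is a centrally symmetric convex polygon. As $X\mapsto\al X+C$ is a contraction on nonempty compact convex sets whose obvious fixed point is $\tfrac{1}{1-\al}C$, we get $Q=\tfrac{1}{1-\al}C$; in particular $Q$ is a convex polygon, and the lemma reduces to showing $A_\ka=Q$. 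Since $A_\ka\subseteq Q$ is automatic, it suffices to prove the one-step inclusion $Q\subseteq\bigcup_{s\in S}(\al Q+s)$. This union equals $\Phi(Q)$, where $\Phi(X):=\bigcup_{s\in S}(\ka^{q'}X+s)$ (here $\ka^{q'}Q=\al Q$ because $Q=-Q$); since $\Phi$ is monotone and a Hausdorff-contraction whose fixed point is $A_\ka$, the inclusion $Q\subseteq\Phi(Q)$ propagates to $Q\subseteq\Phi^n(Q)$ for every $n$ and hence to $Q\subseteq A_\ka$.

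It remains to prove $Q\subseteq\bigcup_{s\in S}(\al Q+s)$, and this is the only place the hypothesis $\be\le2$ is used. Write a point of $Q=\tfrac{1}{1-\al}C$ as $\tfrac{1}{1-\al}\sum_{i=0}^{q'-1}t_i\ka^i$ with every $t_i\in[-1,1]$, and take $s=\sum_i\epsilon_i\ka^i\in S$. A direct check shows the point lies in $\al Q+s$ as soon as $|t_i-(1-\al)\epsilon_i|\le\al$ for all $i$. Choosing $\epsilon_i=+1$ when $t_i\ge0$ and $\epsilon_i=-1$ when $t_i<0$ makes $t_i-(1-\al)\epsilon_i$ range over $[-(1-\al),\al]$, which is contained in $[-\al,\al]$ exactly when $\al\ge\tfrac12$, i.e. when $\be\le2$. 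This completes the covering, and with it the proof. I expect the main, though still mild, obstacle to be making the hull identity $Q=\tfrac{1}{1-\al}C$ airtight -- in particular confirming that $C$ is honestly two-dimensional (so ``polygon'' is warranted) and that $\mathrm{conv}$ may be interchanged with the Minkowski sum and with the limit defining $Q$; the covering step, as shown, is just an elementary interval estimate.
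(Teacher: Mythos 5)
Your proof is correct, but it is organized differently from the paper's. The paper uses the same starting observation ($\ka^{q'}=\pm\rho^{q'}$ is real) but then splits the digits of $\sum_k a_k\ka^k$ into residue classes mod $q'$, writing $A_\ka=J+\ka J+\dots+\ka^{q'-1}J$, where $J$ is the set of $\{\pm1\}$ expansions in base $\be$; since $\be\le2$ forces $J$ to be the full interval $\bigl[-\frac{\be}{\be-1},\frac{\be}{\be-1}\bigr]$, $A_\ka$ is a Minkowski sum of $q'$ segments, hence a convex polygon (a zonotope), and the proof ends there. You instead pass to the $q'$-fold iterated IFS, compute $\mathrm{conv}(A_\ka)=\frac1{1-\al}C$ with $C=\sum_{i=0}^{q'-1}[-\ka^i,\ka^i]$, and prove the hull is filled by verifying the self-covering $Q\subseteq\bigcup_{s\in S}(\al Q+s)$ and pushing it through the Hutchinson operator. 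The two arguments hinge on the same elementary fact — your coordinate-wise choice $\epsilon_i=\mathrm{sign}(t_i)$ with the estimate $[-(1-\al),\al]\subseteq[-\al,\al]$ iff $\al\ge\frac12$ is exactly the interval-filling argument for $J$, carried out in all $q'$ coordinates simultaneously — so what the paper gains by the Minkowski-sum factorization is brevity (the one-dimensional fact is classical and quotable), while your route is more self-contained, identifies the polygon explicitly as the zonotope $\frac1{1-\al}C$, and makes the role of the hypothesis $\be\le2$ visible in a single inequality. All the auxiliary steps you flag (symmetry $A_\ka=-A_\ka$ to replace $\ka^{q'}$ by $\al$, $\mathrm{conv}$ commuting with Minkowski sums, uniqueness of the fixed point of $X\mapsto\al X+C$, nondegeneracy of $C$ from $q'\ge2$ and $\ka\notin\BbR$) are sound.
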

\begin{proof}Put
\[
J=\left\{\sum_{k=0}^\infty b_k\be^{-k}\mid b_k\in\{\pm1\}\right\}.
\]
Since $\be\le2$, we have $J=\bigl[-\frac{\be}{\be-1},\frac{\be}{\be-1}\bigr]$.
Now the claim follows from the fact that $A_\ka$ can be
expressed as the following Minkowski sum:
\[
A_\ka=J+\ka J+\dots +\ka^{q'-1} J.
\]
\end{proof}
Let $U_\be$ denote the set of all unique addresses for $x=\sum_{k=0}^\infty b_k\be^{-k}$
with $b_k\in\{\pm1\}$.

\begin{lemma}\label{lem:uniq2uniq}We have:
\begin{enumerate}
\item if $(a_k)_{k=0}^\infty$ is a unique address in $A_\ka$, then
$(a_{q'j+\ell})_{j=0}^\infty\in U_\be$
for all $\ell\in\{0,1,\dots,q'-1\}$;
\label{i}
\item if $(a_{q'j})_{j=0}^\infty$
belongs to $U_\be$, then there exists $(b_k)_{k=0}^\infty$ such that $b_{q'j}=a_{q'j}$
for all $j\ge0$, and $(b_k)_{k=0}^\infty$ is a unique address in $A_\ka$.
\label{ii}
\end{enumerate}
\end{lemma}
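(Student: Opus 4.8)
The plan is to exploit the Minkowski-sum decomposition $A_\ka = J + \ka J + \dots + \ka^{q'-1}J$, where $J = \bigl[-\frac{\be}{\be-1},\frac{\be}{\be-1}\bigr]$ is the attractor of the real IFS $x \mapsto \be^{-1}x \pm 1$, together with the observation that $\ka^{q'} = \rho^{q'}\e^{2\pi i p q'/q}$ is a positive real scalar times a root of unity; in fact, by the choice of $q'$, $\ka^{q'} = \pm\be^{-1}$ (positive when $q$ is even, and real of modulus $\be^{-1}$ when $q$ is odd, the sign being absorbed into a relabelling of digits). Thus, grouping the expansion $\sum_{k\ge0} a_k\ka^k$ by residue class of $k$ modulo $q'$, we get
\[
\pi\bigl((a_k)_{k\ge0}\bigr) = \sum_{\ell=0}^{q'-1} \ka^\ell \sum_{j\ge0} a_{q'j+\ell}\,\ka^{q'j}
= \sum_{\ell=0}^{q'-1} \ka^\ell \, \sigma\!\left(\sum_{j\ge0} a_{q'j+\ell}\,\be^{-j}\right),
\]
where $\sigma = \pm1$ depending only on the parity situation. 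So a point of $A_\ka$ is encoded by the $q'$-tuple of its "slices" $\bigl(\sum_j a_{q'j+\ell}\be^{-j}\bigr)_{\ell=0}^{q'-1} \in J^{q'}$, and the map $J^{q'}\to A_\ka$ sending this tuple to the corresponding $\ka$-combination is well-defined.

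For part~\eqref{i}, I would argue by contraposition: suppose some slice $(a_{q'j+\ell_0})_j$ is \emph{not} a unique $\be$-address, so there is a different digit sequence $(a'_{q'j+\ell_0})_j \in \{\pm1\}^{\mathbb N}$ with $\sum_j a'_{q'j+\ell_0}\be^{-j} = \sum_j a_{q'j+\ell_0}\be^{-j}$. Replace the $\ell_0$-slice of $(a_k)$ by $(a'_{q'j+\ell_0})_j$ and keep all other slices unchanged. By the displayed identity the resulting $\ka$-expansion gives the same point of $A_\ka$, yet the digit sequence differs, contradicting uniqueness of $(a_k)$. Hence every slice lies in $U_\be$.

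For part~\eqref{ii}, suppose $(a_{q'j})_j \in U_\be$. For the remaining residue classes $\ell = 1,\dots,q'-1$, simply choose \emph{any} valid $\be$-address for \emph{some} point of $J$ — e.g. take $b_{q'j+\ell}$ to be the greedy (or any fixed canonical) expansion of some chosen target, though it is cleanest to pick each of those slices to itself be a unique $\be$-address, which is possible since $U_\be$ is nonempty (it contains $\pm1\,\overline{(\mp1)}$-type endpoints, indeed the endpoints $\pm\frac{\be}{\be-1}$ have the unique address $(\pm1)^\infty$). Now form $(b_k)_{k\ge0}$ by assembling these $q'$ slices. I claim $(b_k)$ is a unique address in $A_\ka$: if $(b'_k)$ were another address for $\pi\bigl((b_k)\bigr)$, then, since the decomposition of $A_\ka$ into the $q'$-tuple of slices is into $\emph{internally direct}$ coordinates — each $\ka^\ell$, $0\le\ell\le q'-1$, contributes an independent real-line direction because $1,\ka,\dots,\ka^{q'-1}$ together with the overall real scaling span $\mathbb R^2$ "coherently" — each slice of $(b'_k)$ must represent the same real number as the corresponding slice of $(b_k)$, and by the uniqueness of every slice of $(b_k)$ we get $b'_k = b_k$ for all $k$. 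This yields the required $(b_k)$ with $b_{q'j} = a_{q'j}$.

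The main obstacle is the claim used in part~\eqref{ii} (and implicitly needed to make the slice decomposition rigorous): that the map from $q'$-tuples of reals in $J$ to $A_\ka$ via $\sum_\ell \ka^\ell(\cdot)$ is injective on the relevant set, equivalently that distinct slice data give distinct points, so that uniqueness of each slice-address forces uniqueness of the assembled address. When $q'$ is $1$ or $2$ this is immediate (the vectors $1$ and $\ka$ are $\mathbb R$-linearly independent since $\ka\notin\mathbb R$); for $q'\ge3$ the $q'$ complex numbers $1,\ka,\dots,\ka^{q'-1}$ are $\mathbb R$-linearly dependent, so naive injectivity fails, and one must instead use that the slice-values are constrained to lie in the \emph{bounded} interval $J$ and exploit the geometry of the $\ka^{q'}=\be^{-1}$-scaling to separate the contributions — this is exactly the point where the hypothesis $\be\le2$ (so that $J$ is a genuine interval, not a Cantor set, and the Minkowski sum is a polygon rather than something with overlaps that destroy the correspondence) does the work. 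I expect the careful version of this separation argument, handling the linear dependence among the $\ka^\ell$, to be the technical heart of the proof; everything else is bookkeeping on the index arithmetic modulo $q'$.
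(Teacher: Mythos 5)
Part~\eqref{i} of your proposal is essentially the paper's own argument (swap the non-unique slice for its alternative $\be$-expansion and contradict uniqueness of $(a_k)$), so that half is fine, modulo the harmless sign bookkeeping when $q$ is even and $\ka^{q'}=-\be^{-1}$.

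Part~\eqref{ii} has a genuine gap, and you have in fact located it yourself without closing it. Your construction fills the residue classes $\ell=1,\dots,q'-1$ with \emph{arbitrary} (or arbitrarily chosen unique) $\be$-addresses and then asserts that any other address $(b'_k)$ of the assembled point must have slices representing the same reals as the slices of $(b_k)$. That assertion is exactly the injectivity of the map $J^{q'}\to A_\ka$, $(x_0,\dots,x_{q'-1})\mapsto\sum_\ell \ka^\ell x_\ell$, which fails badly for $q'\ge3$ (a $q'$-dimensional cube cannot map injectively onto the planar set $A_\ka$); a competing address is free to redistribute mass between residue classes, and being extremal \emph{within each slice} does not make the assembled point extremal in $A_\ka$. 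The hypothesis $\be\le2$ is not available to rescue this (the lemma is stated without it, and it enters only the polygon lemma), and no ``separation argument'' of the kind you defer to is supplied. The paper avoids the issue by choosing the filling digits in a globally extremal way: for $k\not\equiv0\bmod q$ set $b_k=+1$ if $\Im(\ka^k)>0$ and $b_k=-1$ if $\Im(\ka^k)<0$ (well defined since $\Im(\ka^k)\neq0$ there). Then every address $(b'_k)$ satisfies $\Im\bigl(\sum_k b'_k\ka^k\bigr)\le\Im\bigl(\sum_k b_k\ka^k\bigr)$, with equality forcing $b'_k=b_k$ at all $k\not\equiv0\bmod q$; so any other address of the same point can differ only on the class $k\equiv0$, where the difference $\sum_j(b'_{qj}-a_{qj})\be^{-j}$ would be a nonzero real number by $(a_{qj})\in U_\be$, a contradiction. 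To repair your proof you would need to replace the arbitrary choice of the other slices by this (or an equivalent) extremality device; as written, the key uniqueness claim in part~\eqref{ii} is unsupported.
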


\begin{proof}(i) If $(a_{q'j+\ell})$ were not unique, there would exist $(b_{q'j+\ell})$ such that
$\sum_{j=0}^\infty a_{q'j+\ell}\be^{-j}=\sum_{j=0}^\infty b_{q'j+\ell}\be^{-j}$, i.e.,
$\sum_{j=0}^\infty a_{q'j+\ell}\ka^{q'j}=\sum_{j=0}^\infty b_{q'j+\ell}\ka^{q'j}$,
whence $(a_k)$ could not be a unique address.

\medskip
\noindent
(ii) Let $q$ be odd; the even case is similar. Put for $k\not\equiv0\bmod q$,
\[
b_k=\begin{cases}
+1, & \Im(\ka^k)>0,\\
-1, & \Im(\ka^k)<0.
\end{cases}
\]
Clearly, this sequence is well defined, since $\Im(\ka^k)\neq0$ if $k\not\equiv0\bmod q$.
Now put $b_{qj}=a_{qj}$ for all $j\ge0$. We claim that the resulting sequence $(b_k)_{k=0}^\infty$
is a unique address.

Indeed, by our construction, $\Im(\sum_{k=0}^\infty b_k'\ka^k)\le \Im(\sum_{k=0}^\infty b_k\ka^k)$
for any $(b_k')$, with the equality if and only if $b_k'\equiv b_k$ for all $k\not\equiv0\bmod q$.
If such an equality takes place, then $\sum_{k=0}^\infty (b_k'-b_k)\ka^k$ is real. Moreover,
$\sum_{k=0}^\infty (b_k'-b_k)\ka^k=\sum_{j=0}^\infty (b'_{qj}-a_{qj})\be^{-j}\neq0$, since
$(a_{qj})_{j=0}^\infty\in U_\be$.
\end{proof}

This yields the following result.

\begin{lemma}\label{lem:finite-finite}
The set of uniqueness $\mathcal U_\ka$ is finite if and only if $U_\be$ is. If
these sets are infinite, then their cardinalities are equal.
Furthermore, $\dim_H \mathcal U_\ka>0$ if and only if the topological
entropy of $U_\be$ is positive. 
\end{lemma}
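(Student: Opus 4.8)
The plan is to transfer the whole problem to and from the one-dimensional base-$\be$ picture via Lemma~\ref{lem:uniq2uniq}, tracking cardinality and (symbolic) entropy simultaneously. Throughout I would identify a point of $\mathcal U_\ka$ with its unique address, write $\widetilde{\mathcal U}_\ka\subseteq\{-1,1\}^{\BbN}$ for the resulting set of sequences (so $|\mathcal U_\ka|=|\widetilde{\mathcal U}_\ka|$), and use that $\ka^{q'}$ is real, equal to $\theta:=\be^{-1}$ when $q$ is odd and $\theta:=-\be^{-1}$ when $q$ is even, so that $\pi((a_k))=\sum_{\ell=0}^{q'-1}\ka^{\ell}\sum_{j\ge0}a_{q'j+\ell}\,\theta^{j}$. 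For a set $S$ of sequences let $p_n(S)$ be the number of its length-$n$ prefixes and $h(S)=\limsup_n\tfrac1n\log p_n(S)$ the (symbolic) topological entropy.

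First I would dispose of the cardinality claims. Lemma~\ref{lem:uniq2uniq}(i) yields a well-defined map $\Psi\colon\widetilde{\mathcal U}_\ka\to (U_\be)^{q'}$, $(a_k)\mapsto\bigl((a_{q'j+\ell})_{j\ge0}\bigr)_{0\le\ell<q'}$, injective because a sequence is recovered from its $q'$ arithmetic subsequences. Lemma~\ref{lem:uniq2uniq}(ii) yields an injective map $\iota\colon U_\be\to\widetilde{\mathcal U}_\ka$ sending $(c_j)$ to the unique $A_\ka$-address $(b_k)$ with $b_{q'j}=c_j$ for all $j$, the remaining coordinates being equal to a \emph{fixed} sequence $\eta_k$ (the sign of $\Im(\ka^k)$) that does not depend on $(c_j)$. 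Hence $|U_\be|\le|\mathcal U_\ka|\le|U_\be|^{q'}$, which gives ``finite if and only if finite'', and forces $|\mathcal U_\ka|=|U_\be|$ in the infinite case by the usual cardinal arithmetic.

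For the dimension statement, counting prefixes through $\Psi$ gives $p_{q'n}(\widetilde{\mathcal U}_\ka)\le p_n(U_\be)^{q'}$, while counting through $\iota$ (here it is essential that the coordinates with index not divisible by $q'$ are frozen) gives $p_n(U_\be)\le p_{q'n}(\widetilde{\mathcal U}_\ka)$; together these yield $\tfrac1{q'}h(U_\be)\le h(\widetilde{\mathcal U}_\ka)\le h(U_\be)$, so $h(U_\be)>0\iff h(\widetilde{\mathcal U}_\ka)>0$. Since a length-$n$ prefix of an address pins down the point to within $2|\ka|^n/(1-|\ka|)$, the set $\mathcal U_\ka$ is covered by $p_n(\widetilde{\mathcal U}_\ka)$ sets of that diameter, whence $\dim_H\mathcal U_\ka\le h(\widetilde{\mathcal U}_\ka)/\log(1/|\ka|)$ and in particular $h(U_\be)=0$ forces $\dim_H\mathcal U_\ka=0$. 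For the converse I would use $\pi\circ\iota$, which is injective on $U_\be$ (two unique $A_\ka$-addresses with the same image coincide) and satisfies $\pi\circ\iota((c_j))=C+\sum_j c_j\theta^{j}$ with $C=\sum_{q'\nmid k}\eta_k\ka^{k}$ fixed; thus $\mathcal U_\ka$ contains a translate of the set of reals $\{\sum_j c_j\theta^{j}:(c_j)\in U_\be\}$, which — possibly after the alternating sign twist $(c_j)\mapsto((-1)^jc_j)$ when $q$ is even, turning it into the analogous object for base $-\be$ — is coded by a univoque subshift and therefore has positive Hausdorff dimension precisely when $h(U_\be)>0$, by the known one-dimensional theory of univoque sets (cf.\ \cite{GS} and its analogues for negative bases). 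Combining the two bounds gives $\dim_H\mathcal U_\ka>0\iff h(U_\be)>0$.

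The only genuinely non-formal point is this last implication: the estimates above give $\dim_H\mathcal U_\ka=0$ whenever $h(U_\be)=0$ for free, but passing from positive entropy to positive Hausdorff dimension is not soft, because the natural subshift $\overline{U_\be}$ can contain non-unique expansions and the coding map is not bi-Lipschitz along the uniqueness set (the two halves of $\mathcal U_\ka$ accumulate on one another at non-unique points). The route above outsources this to the univoque-set literature; alternatively one can keep the argument internal, exactly as in Case~1: positive entropy produces a recurrent, non-eventually-periodic sequence inside $U_\be$ itself, running the Case~1 construction on it produces words satisfying the base-$\be$ analogue of the hypotheses of Lemma~\ref{thm:unique tool}, and interleaving these with the frozen blocks $\eta_k$ via Lemma~\ref{lem:uniq2uniq}(ii) produces $u,v,w$ satisfying the hypotheses of Lemma~\ref{thm:unique tool} for $A_\ka$ with $\{uv,uw\}^*$ unambiguous, so that its Corollary applies. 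Verifying that the $A_\ka$-disjointness conditions follow from the base-$\be$ ones together with the defining property of the $\eta_k$ is the main bookkeeping, and I expect extracting a recurrent univoque sequence when $q$ is even to be the fiddliest detail.
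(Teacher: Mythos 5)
Your proposal is correct and follows essentially the same route as the paper: the cardinality claims via the two maps furnished by Lemma~\ref{lem:uniq2uniq} (injection of $\mathcal U_\ka$-addresses into $U_\be^{q'}$ and of $U_\be$ into the unique addresses of $A_\ka$), and the entropy/dimension comparison via the same embeddings together with the standard covering bound. The only difference is that you spell out the direction the paper compresses into ``the other direction is similar'' --- namely that positive entropy of $U_\be$ gives $\dim_H\mathcal U_\ka>0$ because $\pi\circ\iota$ plants a translate of the one-dimensional univoque set inside $\mathcal U_\ka$ (with the alternating-sign twist when $q$ is even) --- which is exactly the intended argument, given that Theorem~\ref{thm:GS} is stated in terms of Hausdorff dimension of $U_\be$.
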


\begin{proof}
By Lemma \ref{lem:uniq2uniq} part \ref{i}, we see that the cardinality of
    $\mathcal{U}_\kappa$ is bounded above by the cardinality of
    $\underbrace{U_\beta \times \dots \times U_\beta}_{q'}$, and hence
    by the cardinality of $U_\beta$.
By part \ref{ii} we see that the cardinality of $\mathcal{U}_\kappa$
    is bounded below by the cardinality of $U_\beta$.
This proves the first two statements.

If $\dim \mathcal{U}_\kappa > 0$ then $\mathcal{U}_\kappa$ has positive topological
    entropy, and hence so does
    $\underbrace{U_\beta \times \dots \times U_\beta}_{q'}$.
The other direction is similar.
\end{proof}

Let $\be_*=1.787231650\dots$
denote the \textit{Komornik-Loreti constant} introduced by
V.~Komornik and P.~Loreti in \cite{KL}, which is defined as the
unique solution of the equation $\sum_{n=1}^{\infty}\mathfrak{m}_{n}x^{-n+1}=1$,
where $\mathfrak{m}=(\mathfrak{m}_n)_1^\infty$ is the Thue-Morse
sequence
$$
\mathfrak{m}=0110\,\,1001\,\,1001\,\,0110\,\,1001\,\,0110\dots,
$$
i.e., the fixed point of the substitution $0\to01,\ 1\to10$.
Put $G=\frac{1+\sqrt5}2$. The following result gives a complete description of
the set $U_\be$.

\begin{thm}[\cite{EJK,GS}]\label{thm:GS} The set $U_\be$ is:
\begin{enumerate}
\item $\bigl\{-\frac{\be}{\be-1},\frac{\be}{\be-1}\bigr\}$ if $\be\in (1,G]$;
\item infinite countable for $\be\in(G,\be_*)$;
\item an uncountable set of zero Hausdorff dimension if
$\be=\be_*$; and
\item a set of positive Hausdorff dimension for
$\be\in (\be_*,\infty)$.
\end{enumerate}
\end{thm}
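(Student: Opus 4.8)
The plan is to deduce this from the classical classification of \emph{univoque bases} for the standard digit set $\{0,1\}$ --- due to Glendinning--Sidorov \cite{GS} and Erd\H{o}s--Jo\'o--Komornik \cite{EJK}, the threshold constant $\be_*$ itself originating in Komornik--Loreti \cite{KL} --- after an elementary change of alphabet from $\{-1,+1\}$ to $\{0,1\}$. I would not attempt to reprove the deep part of that classification; the task here is to record the reduction.

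First I would set up the dictionary between the two alphabets. For $x=\sum_{k=0}^\infty b_k\be^{-k}$ with $b_k\in\{-1,+1\}$, put $a_k=\tfrac12(b_k+1)\in\{0,1\}$, so that
\[
x=2\sum_{k=0}^\infty a_k\be^{-k}-\frac{\be}{\be-1}.
\]
Thus the affine map $y\mapsto 2y-\frac{\be}{\be-1}$ carries, digit by digit, the $\{0,1\}$-representations of $y\in\bigl[0,\frac{\be}{\be-1}\bigr]$ onto the $\{\pm1\}$-representations of $x$, and hence preserves uniqueness. Since possessing two representations is a purely digit-wise (tail) condition, checked at every index, the set of addresses occurring in $U_\be$ is, under the relabelling $-1\mapsto0$, $+1\mapsto1$, essentially the classical univoque subshift $\mathcal{V}_\be\subseteq\{0,1\}^{\BbN}$ for base $\be$ --- the only discrepancy comes from the leading digit, and that can affect neither cardinality, nor topological entropy, nor the Hausdorff dimension of the image. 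Consequently $U_\be$ is finite, countably infinite, or uncountable exactly when $\mathcal{V}_\be$ is, has the same topological entropy, and its image in $\BbR$ has the same Hausdorff dimension; so it suffices to establish the four alternatives for $\mathcal{V}_\be$.

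The classification of $\mathcal{V}_\be$ rests on the lexicographic (Parry-type) characterisation: writing $\alpha(\be)$ for the quasi-greedy expansion of $1$ in base $\be$, a $\{0,1\}$-sequence is univoque iff each tail following a digit $1$ is lexicographically smaller than $\alpha(\be)$ and the reflection of each tail following a digit $0$ is lexicographically smaller than $\alpha(\be)$. For $\be\in(1,G]$ this forces everything except the two constant sequences to fail, leaving exactly the two endpoints of the interval $J$, which is (i). For $\be\in(G,\be_*)$ the admissible sequences are assembled from finitely many blocks and form a countably infinite set, which is (ii). At $\be=\be_*$ the extremal admissible tails are precisely the Thue--Morse sequence and its close relatives, an uncountable family of sub-exponential word complexity, so $h_{\mathrm{top}}(\mathcal{V}_{\be_*})=0$; since the Hausdorff dimension of the image is at most $h_{\mathrm{top}}(\mathcal{V}_\be)/\log\be$, the set then has zero Hausdorff dimension, which is (iii). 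Finally, for $\be>\be_*$ the sequence $\alpha(\be)$ is large enough that $\mathcal{V}_\be$ contains a subshift of finite type of positive entropy, so $h_{\mathrm{top}}(\mathcal{V}_\be)>0$ and the image has positive Hausdorff dimension, which is (iv).

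The genuine difficulty sits entirely in items (iii) and (iv): identifying $\be_*$ as the exact border between countable and uncountable $\mathcal{V}_\be$, and proving the zero-Hausdorff-dimension phenomenon precisely at the Komornik--Loreti constant, are the hard theorems of \cite{GS,EJK,KL}. What this proof contributes is only the routine (but worth recording) verification that the symmetric-alphabet set $U_\be$ inherits all four properties through the affine dictionary of the second paragraph.
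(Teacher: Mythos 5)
Your proposal is correct and takes essentially the same route as the paper: Theorem~\ref{thm:GS} is stated there without proof as a citation of \cite{EJK,GS} (with $\be_*$ from \cite{KL}), and the only thing needed beyond those references is precisely the affine relabelling $b_k\mapsto\frac12(b_k+1)$ from the alphabet $\{\pm1\}$ to $\{0,1\}$ that you record, which preserves uniqueness, cardinality, topological entropy and Hausdorff dimension (plus the trivial observation that $\be\ge2$ gives the full shift). One minor slip: in your lexicographic criterion the roles of the digits $0$ and $1$ are interchanged relative to the standard statement (the tail after a $0$ and the reflected tail after a $1$ must be dominated by the quasi-greedy expansion of $1$), but since the univoque set is invariant under the reflection $a\mapsto 1-a$, your condition defines the same set and nothing downstream is affected.
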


Lemma~\ref{lem:finite-finite} and Theorem~\ref{thm:GS} yield

\begin{thm}\label{thm:uniq-rational}
Let $\be$ be given by (\ref{eq:beta}). Then the set of uniqueness $\mathcal U_\ka$
for the rational case is:
\begin{enumerate}
\item finite non-empty if $\be\in (1,G]$;
\item infinite countable for $\be\in(G,\be_*)$;
\item an uncountable set of zero Hausdorff dimension if
$\be=\be_*$; and
\item a set of positive Hausdorff dimension for
$\be\in (\be_*,\infty)$.
\end{enumerate}
\end{thm}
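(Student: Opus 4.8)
The plan is to derive Theorem~\ref{thm:uniq-rational} as an immediate consequence of Lemma~\ref{lem:finite-finite} and the classification of $U_\be$ recorded in Theorem~\ref{thm:GS}, treating the four ranges of $\be$ one at a time and transcribing each clause through the lemma. The point is that the base $\be$ of~\eqref{eq:beta} is exactly the one occurring in Lemma~\ref{lem:finite-finite}, so the translation is already in place: $\mathcal U_\ka$ is finite precisely when $U_\be$ is; when both are infinite their cardinalities agree; and $\dim_H\mathcal U_\ka>0$ precisely when $U_\be$ has positive topological entropy. For non-emptiness in the smallest range I would additionally use the bound $|\mathcal U_\ka|\ge|U_\be|$ furnished by Lemma~\ref{lem:uniq2uniq}(ii).

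Concretely: for $\be\in(1,G]$, Theorem~\ref{thm:GS}(i) says $U_\be$ is the two-point set $\{\pm\tfrac{\be}{\be-1}\}$, so it is finite and non-empty; by Lemma~\ref{lem:finite-finite} together with $|\mathcal U_\ka|\ge|U_\be|\ge2$, the set $\mathcal U_\ka$ is finite and non-empty. For $\be\in(G,\be_*)$, $U_\be$ is countably infinite by Theorem~\ref{thm:GS}(ii), hence $\mathcal U_\ka$ is infinite with $|\mathcal U_\ka|=|U_\be|=\aleph_0$, i.e., countably infinite. For $\be\in(\be_*,\infty)$, $U_\be$ has positive Hausdorff dimension by Theorem~\ref{thm:GS}(iv), hence positive topological entropy, so Lemma~\ref{lem:finite-finite} gives $\dim_H\mathcal U_\ka>0$ (and, via the cardinality clause, that $\mathcal U_\ka$ is uncountable).

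The only clause needing a moment's care is $\be=\be_*$, where Theorem~\ref{thm:GS}(iii) asserts that $U_\be$ is uncountable of zero Hausdorff dimension, whereas Lemma~\ref{lem:finite-finite} is phrased via topological entropy. Here I would invoke the standard equivalence, for these one-parameter digit sets, between positive topological entropy of $U_\be$ and positive Hausdorff dimension of its image: since the coding map $(b_k)\mapsto\sum_k b_k\be^{-k}$ is injective on $U_\be$ and shrinks cylinders at rate $\be^{-1}$, a positive-entropy subshift inside $U_\be$ would contain a subshift of finite type of positive entropy whose image has positive Hausdorff dimension, exactly as in the corollary to Lemma~\ref{thm:unique tool}. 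Thus zero Hausdorff dimension at $\be_*$ forces the topological entropy of $U_{\be_*}$ to vanish, and Lemma~\ref{lem:finite-finite} then yields simultaneously that $\mathcal U_\ka$ is uncountable (equal cardinality with $U_\be$) and that $\dim_H\mathcal U_\ka=0$. I expect this entropy-versus-dimension bookkeeping at the Komornik-Loreti constant $\be_*$ to be the sole non-mechanical ingredient; the remaining three cases are pure transcription through Lemma~\ref{lem:finite-finite}.
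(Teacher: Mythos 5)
Your proposal is correct and takes essentially the same route as the paper, which obtains Theorem~\ref{thm:uniq-rational} precisely by combining Lemma~\ref{lem:finite-finite} with the classification of $U_\be$ in Theorem~\ref{thm:GS}, just as you do case by case. The only point the paper leaves implicit is the bookkeeping at $\be=\be_*$ (passing between zero Hausdorff dimension of $U_{\be_*}$ and zero topological entropy), which you flag and resolve in the intended way.
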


\begin{figure}
\includegraphics[width=175pt,height=175pt]{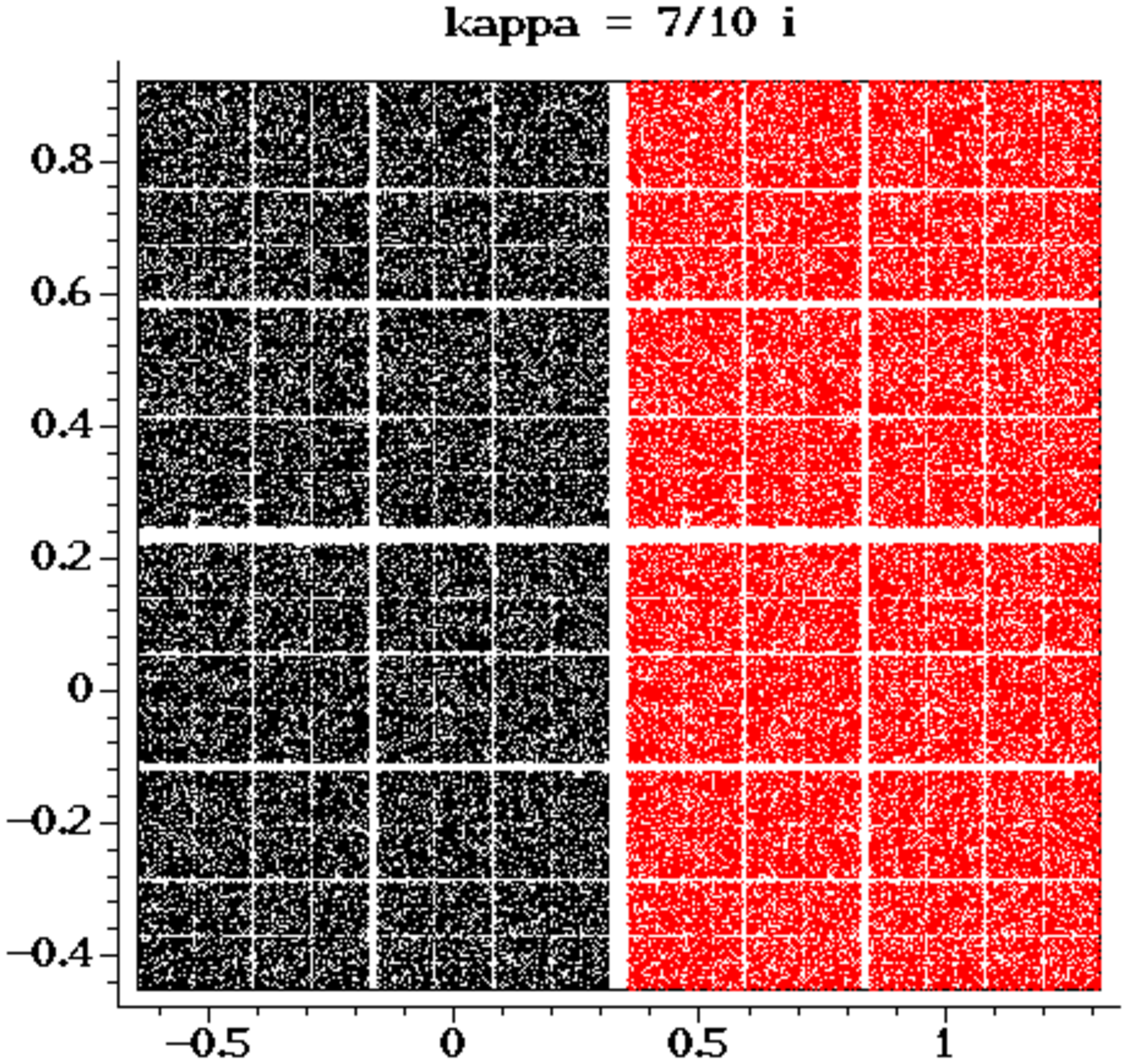}
\includegraphics[width=175pt,height=175pt]{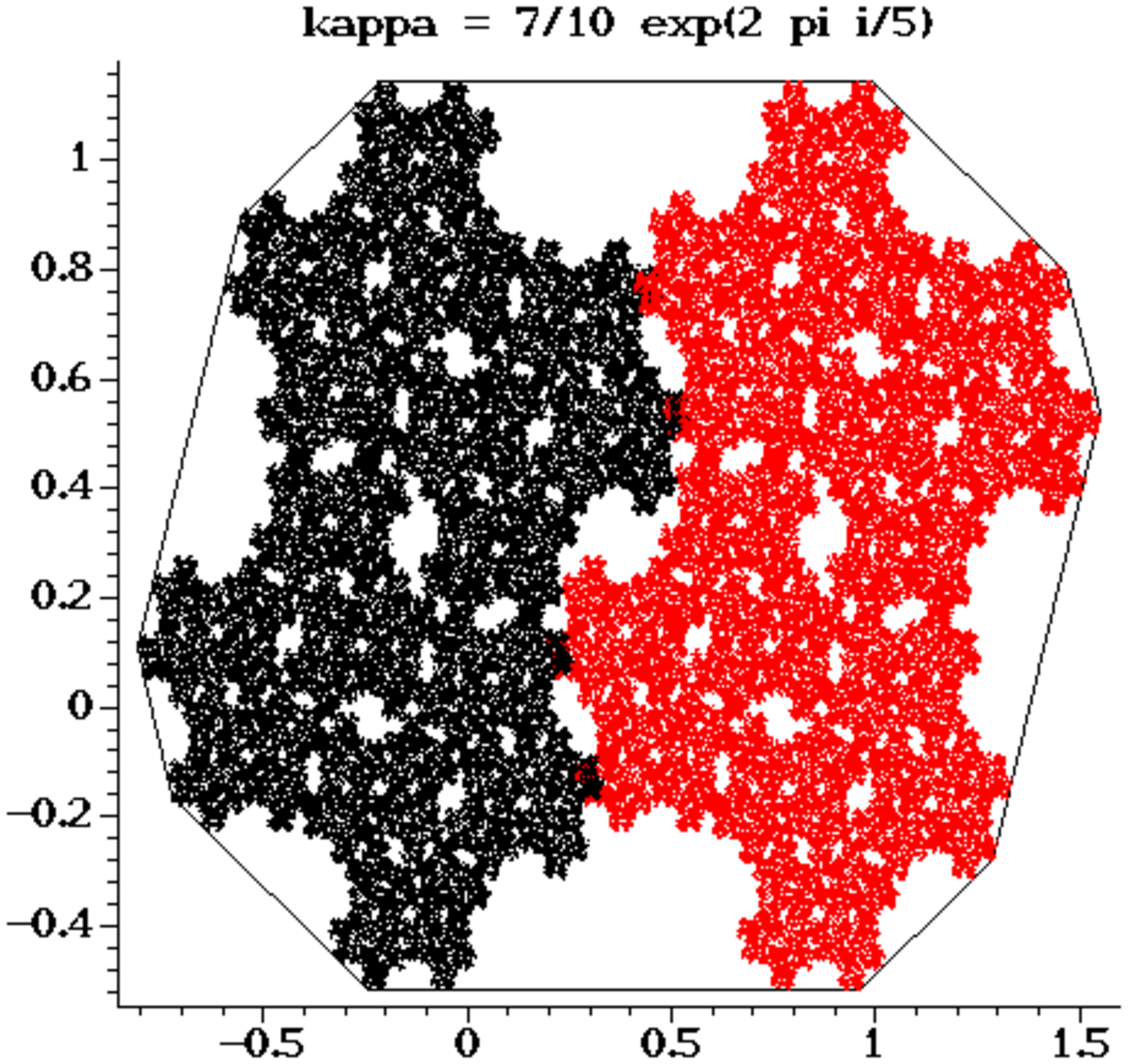} \\
\includegraphics[width=175pt,height=175pt]{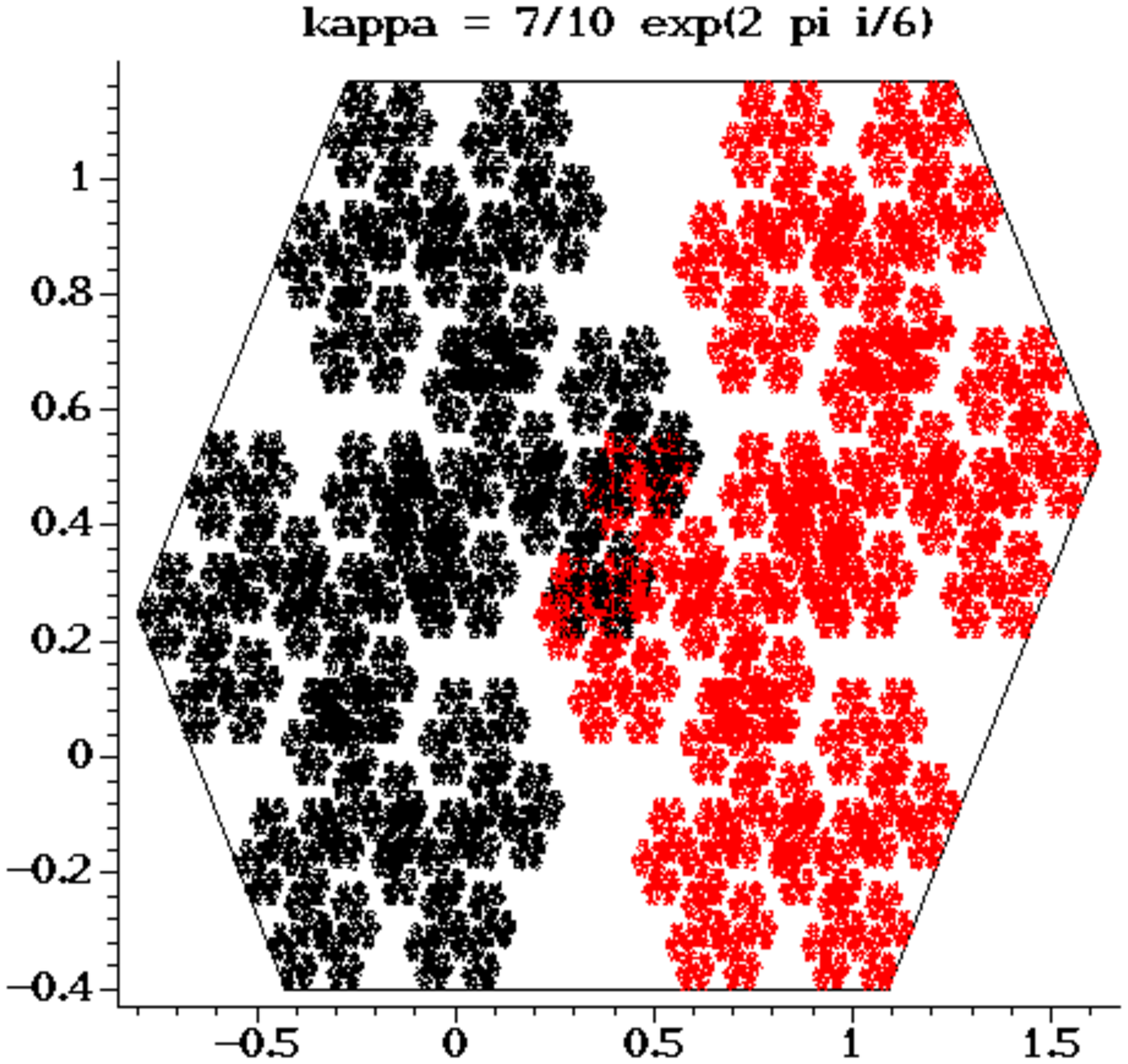}
\includegraphics[width=175pt,height=175pt]{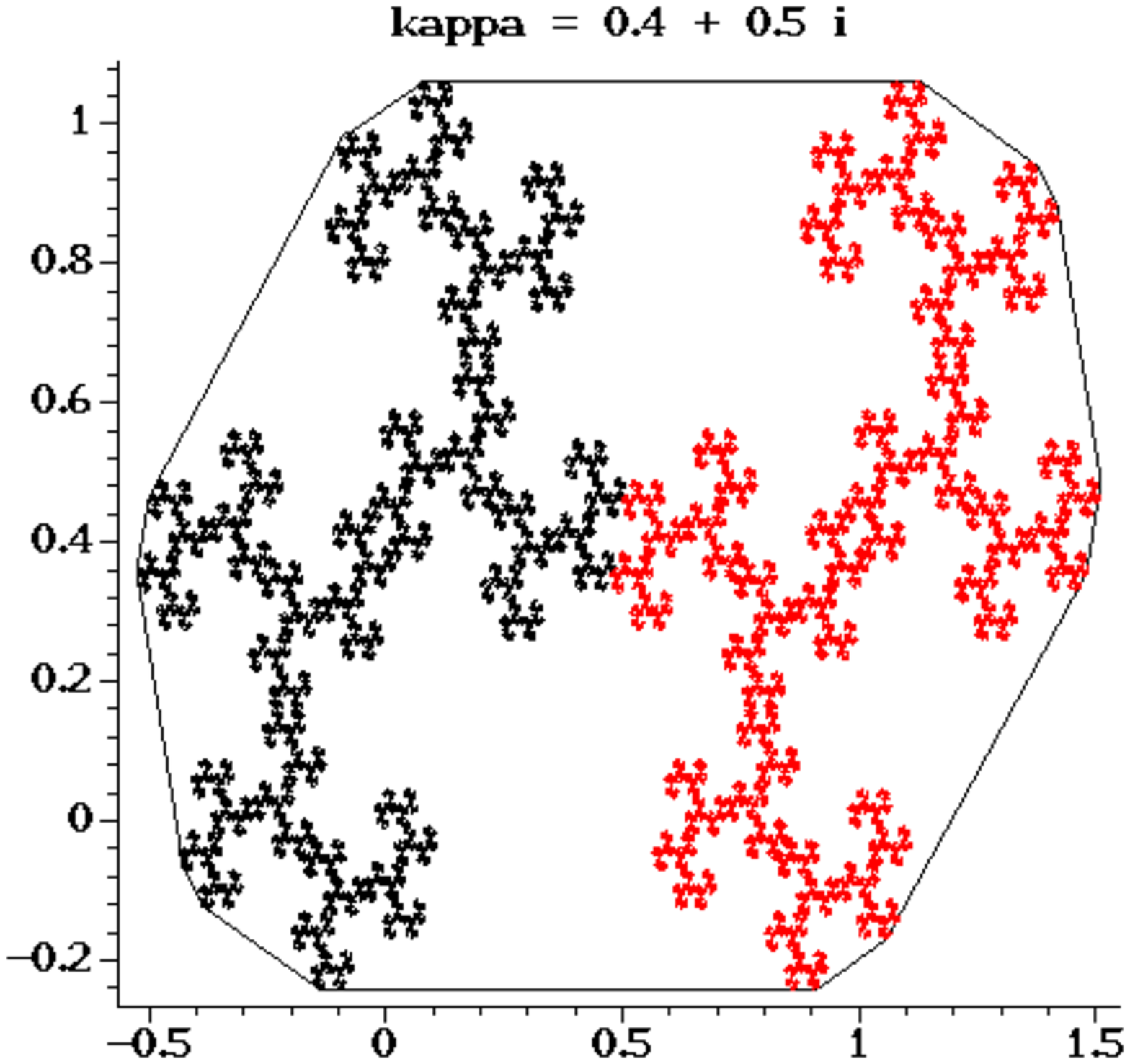}
\caption{Convex hulls for $A_{0.7i}$ (a square), $A_{0.7 \e^{2 \pi i/5}}$ (a decagon),
    $A_{0.7 \e^{\pi i/3}}$ (a hexagon) and $A_{0.4 + 0.5i}$ (an ``infinite polygon'')}
    \label{fig:conv-complex}
\end{figure}

\begin{rmk}
Note that if $\arg(\ka)/\pi \in \BbQ$ and $\be>2$, then the convex hull of $A_\ka$ is still a $2q'$-gon.
This follows directly from \cite[Theorem~4.1]{SW}.
See also \cite{SW2} for further discussion on the convex hull of $A$.
\end{rmk}

\begin{rmk}
If $\be\le G$, then we have a bound $\#\mathcal U_\ka \le 2^{q'}$. In fact,
one can show that $\#\mathcal U_\ka =2q'$ - more precisely, only the extreme
points of $A_\ka$ are points of uniqueness. This is completely analogous
to \cite[Theorem~2.7]{S07} which deals with the self-similar IFS without
rotations. We leave a proof to the interested reader.
\end{rmk}

See Figure~\ref{fig:conv-complex} for illustration.

\subsection{The remaining mixed real case} \label{sub:equal}

Finally let \[ M = \begin{pmatrix} -\la & 0 \\ 0 & \la \end{pmatrix} \] for
    $0 < \la < 1$.

\begin{lemma}
\label{lem:-la la}
\begin{enumerate}
\item If $\lambda < 1/\sqrt{2}$ then $A_{-\la, \la}$ is totally disconnected.
\item If $\lambda \geq 1/\sqrt{2}$ then $A_{-\la, \la}$ is a parallelogram.
\end{enumerate}
\end{lemma}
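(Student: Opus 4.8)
The plan is to linearly change coordinates so that $A_{-\la,\la}$ becomes a Cartesian product of two rescaled copies of the classical set of $\pm1$-expansions in a real base, and then invoke the well-known dichotomy for that set.

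Recall that here $\pi(a_0a_1\dots)=\bigl(\sum_{i\ge0}a_i(-\la)^i,\ \sum_{i\ge0}a_i\la^i\bigr)$ with $a_i\in\{-1,+1\}$ (writing $-1$ for $m$ and $+1$ for $p$). Apply the linear isomorphism $g(x,y)=\bigl(\frac{x+y}{2},\frac{y-x}{2}\bigr)$ of $\BbR^2$; it is a homeomorphism and carries solid parallelograms to solid parallelograms. Since $(-\la)^i+\la^i$ equals $2\la^i$ when $i$ is even and $0$ when $i$ is odd, and symmetrically for $\la^i-(-\la)^i$, we get
\[
g\bigl(\pi(a_0a_1\dots)\bigr)=\Bigl(\sum_{j\ge0}a_{2j}\la^{2j},\ \la\sum_{j\ge0}a_{2j+1}\la^{2j}\Bigr).
\]
The even-indexed digits $(a_{2j})$ and the odd-indexed digits $(a_{2j+1})$ range independently over $\{-1,+1\}^{\BbN}$, so
\[
g(A_{-\la,\la})=J\times(\la J),\qquad J:=\Bigl\{\sum_{j\ge0}b_j\mu^j:\ b_j\in\{-1,+1\}\Bigr\},\quad \mu=\la^2 .
\]

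Next I would use the standard facts about $J$, exactly as in the lemma on the set $J$ in Section~\ref{sec:complex rational unique}: $J$ is the attractor of the IFS $\{x\mapsto\mu x-1,\ x\mapsto\mu x+1\}$, it lies in $\bigl[-\frac1{1-\mu},\frac1{1-\mu}\bigr]$, and the two pieces $\mu J-1$ and $\mu J+1$ overlap precisely when $\mu\ge\frac12$. If $\mu\ge\frac12$, an easy induction on the IFS shows $J=\bigl[-\frac1{1-\mu},\frac1{1-\mu}\bigr]$, a non-degenerate closed interval; if $\mu<\frac12$, the two pieces are disjoint, the IFS satisfies the strong separation condition, and $J$ is a self-similar Cantor set, in particular totally disconnected.

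It remains to combine these. If $\la\ge1/\sqrt2$ then $\mu=\la^2\ge\frac12$, so $J$ is a closed interval, $J\times\la J$ is a solid rectangle, and $A_{-\la,\la}=g^{-1}(J\times\la J)$ is a solid parallelogram, with vertices $\pi(p^\infty),\pi((pm)^\infty),\pi((mp)^\infty),\pi(m^\infty)$ corresponding to the four corners of the rectangle. If $\la<1/\sqrt2$ then $\mu<\frac12$, $J$ is totally disconnected, hence so is the product $J\times\la J$ (the connected component of a point in a product space is the product of the connected components), and therefore so is its homeomorphic image $A_{-\la,\la}$. I do not anticipate a genuine obstacle here; the only points needing a careful word are the two classical properties of $J$ --- interval-filling for $\mu\ge\frac12$ and total disconnectedness for $\mu<\frac12$ --- together with the elementary fact that a product of totally disconnected spaces is totally disconnected.
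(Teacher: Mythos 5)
Your proposal is correct and follows essentially the same route as the paper: the same change of coordinates $(x,y)\mapsto\bigl(\frac{x+y}{2},\frac{x-y}{2}\bigr)$, the same observation that the even- and odd-indexed digits vary independently so the image is a product of one-dimensional sets of $\pm1$-expansions in base $\la^2$, and the same dichotomy at $\la^2=\frac12$ between an interval (giving a parallelogram) and a Cantor set. Your write-up is in fact slightly more careful than the paper's on two minor points: you handle the equality case $\la=1/\sqrt2$ explicitly and you note that a product of totally disconnected sets is totally disconnected, whereas the paper only states ``disconnected'' there.
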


\begin{proof}
We have that $x = \sum_{k=0}^\infty a_k (-\la)^k$ and $y = \sum_{k=0}^\infty a_k \la^k$.
Make a change of coordinates $(x,y) \to \left(\frac{x+y}{2}, \frac{x-y}{2}\right)$.
Then
\begin{align*}
    x& = \sum_{k=0}^\infty a_{2k}\lambda^{2k}  \\
    y& = \sum_{k=0}^\infty a_{2k+1}\lambda^{2k+1},
\end{align*}
where $a_j\in\{0,1\},\ j\ge0$.
If $\la < \frac{1}{\sqrt{2}}$ then the set of $x$'s and $y$'s are both Cantor sets,
    and hence $A_{-\la, \la}$ is disconnected.
If $\la > \frac{1}{\sqrt{2}}$ then $x \in \left[ \frac{-1}{1-\la^2}, \frac{1}{1-\la^2}\right]$
    and $y \in \left[ \frac{-\la}{1-\la^2}, \frac{\la}{1-\la^2}\right]$, with $x$ and $y$
    independent, and taking all values in these intervals.
Thus, under this change of variables, the attractor is a rectangle.
Inverting the change of variables proves the result.
\end{proof}

\begin{remark}
Lemma~\ref{lem:-la la} implies that the bound $1/\sqrt{2}$ in Theorem~\ref{thm:interior}
    is sharp for the mixed real case.
\end{remark}

\begin{prop}
Let $\beta = \la^{-2}$.  The set $U_{-\la, \la}$ is:
\begin{enumerate}
\item finite non-empty if $\be\in (1,G]$;
\item infinite countable for $\be\in(G,\be_*)$;
\item an uncountable set of zero Hausdorff dimension if
$\be=\be_*$; and
\item a set of positive Hausdorff dimension for
$\be\in (\be_*,\infty)$.
\end{enumerate}
\end{prop}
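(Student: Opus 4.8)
The plan is to reduce the statement to the one-dimensional theory of unique $\be$-expansions encapsulated in Theorem~\ref{thm:GS}, in exactly the same spirit as the complex rational case was handled via Lemma~\ref{lem:uniq2uniq} and Lemma~\ref{lem:finite-finite}. Starting from the change of coordinates used in the proof of Lemma~\ref{lem:-la la}, but with digits relabelled to lie in $\{-1,+1\}$ (an affine relabelling that does not affect uniqueness of addresses), a point $\pi(a_0a_1\dots)\in A_{-\la,\la}$ equals, in the coordinates $(\tfrac{x+y}2,\tfrac{x-y}2)$, the pair $\bigl(\sum_{j\ge0}a_{2j}\be^{-j},\ -\la\sum_{j\ge0}a_{2j+1}\be^{-j}\bigr)$ with $\be=\la^{-2}>1$. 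The first coordinate depends only on the even-indexed digits and the second only on the odd-indexed ones, so under the shuffle bijection $\Sigma\colon(a_k)_{k\ge0}\mapsto\bigl((a_{2k})_{k\ge0},(a_{2k+1})_{k\ge0}\bigr)$, two addresses represent the same point of $A_{-\la,\la}$ precisely when their $\Sigma$-images agree coordinatewise modulo ``equal $\be$-expansion''. Hence $\Sigma$ restricts to a bijection of $U_{-\la,\la}$ (the set of unique addresses, equivalently, via the injective map $\pi$, the set of points of $A_{-\la,\la}$ with a unique address) onto $U_\be\times U_\be$.

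Granting this, the four clauses follow from Theorem~\ref{thm:GS} applied to each factor. For $\be\in(1,G]$ one has $|U_\be|=2$, so $|U_{-\la,\la}|=4$, i.e.\ finite non-empty (the four addresses are $p^\infty,\ m^\infty,\ (pm)^\infty,\ (mp)^\infty$, the extreme points of the parallelogram or Cantor set $A_{-\la,\la}$). For $\be\in(G,\be_*)$, $U_\be$ is infinite countable, hence so is $U_\be\times U_\be$. For $\be>\be_*$, $\dim_H U_\be>0$, so $\dim_H(U_\be\times U_\be)\ge\dim_H U_\be>0$; alternatively, the subshift generated by $U_\be$ has positive topological entropy, this entropy doubles under the shuffle, and $\pi$ is injective on unique addresses, so $\dim_H U_{-\la,\la}>0$ exactly as in the corollary to Lemma~\ref{thm:unique tool}. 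For $\be=\be_*$, $U_\be$ is uncountable, hence so is $U_{-\la,\la}$.

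The only point that is not a purely formal consequence of the reduction is the claim, in the case $\be=\be_*$, that $U_{-\la,\la}$ has \emph{zero} Hausdorff dimension, since a priori the product of two zero-dimensional sets need not be zero-dimensional; this is where I expect the only real work. I would argue via topological entropy, as in Lemma~\ref{lem:finite-finite}: the closure of the shift-orbit of $U_{\be_*}$ is a subshift of zero topological entropy, and a length-$n$ word that interleaves two sequences is determined by the two length-$\lceil n/2\rceil$ words it interleaves, so the shuffle of two zero-entropy subshifts again has zero entropy. Since $A_{-\la,\la}$ is the attractor of two affine contractions of ratio $\la<1$, the standard covering estimate $\mathcal H^s(\pi(S))\le\liminf_{n}\,p_S(n)\,(\la^n\,\mathrm{diam}\,A_{-\la,\la})^s$, where $p_S(n)$ counts the length-$n$ prefixes occurring in $S$, forces $\dim_H\pi(U_{-\la,\la})=0$ once $p_S(n)$ grows subexponentially. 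This is precisely the zero-entropy half of the equivalence already used (implicitly) in Theorem~\ref{thm:uniq-rational}, so nothing new is needed; everything else is formal bookkeeping through the bijection $\Sigma$.
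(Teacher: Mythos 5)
Your proposal is correct and follows essentially the same route as the paper: decompose an address into its even- and odd-indexed digit subsequences (each giving an independent $\be$-expansion with $\be=\la^{-2}$), reduce uniqueness in $A_{-\la,\la}$ to membership in $U_\be$, and conclude via Theorem~\ref{thm:GS}, exactly as in the rational complex case (Lemmas~\ref{lem:uniq2uniq}--\ref{lem:finite-finite}) to which the paper's own terse proof defers. Your explicit product bijection $U_{-\la,\la}\cong U_\be\times U_\be$ and the entropy argument for the zero-dimension case at $\be=\be_*$ are just the details the paper omits with ``the rest of the proof goes exactly like in the previous subsection.''
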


\begin{proof}
Notice that if $(a_{2k})_0^\infty \in U_\be$, then $(a_k)_0^\infty\in U_{-\la,\la}$
with $a_{2k+1}\equiv -1, k\ge0$. The rest of the proof goes exactly like
in the previous subsection, so we omit it.
\end{proof}

\section{Appendix: proof of Theorem~\ref{thm:Victor}}

\begin{lemma}[V.~Kleptsyn]
\label{lem:Victor}
Let $\gamma$ and $\gamma'$ be two paths in $\BbC$.
Let $\delta$ be the diameter of $\gamma([s_1,s_2])$, and assume that there is no point
    with nonzero index with respect to the loop
$\sigma=\{\gamma(s) + \gamma'(t) :s,t\in\partial([s_1, s_2] \times [0,1])$.
Then the sets $\gamma(s_1) + \gamma'([0,1])$ and $\gamma(s_2) + \gamma'(([0,1])$
    coincide outside $\delta$-neighbourhoods of
    $\gamma([s_1, s_2])+\gamma'(0)$ and $\gamma([s_1, s_2])+\gamma'(1)$.
\end{lemma}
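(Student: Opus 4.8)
The plan is to argue that the one-parameter family of curves $s\mapsto \gamma(s)+\gamma'([0,1])$, as $s$ ranges over $[s_1,s_2]$, sweeps out a region bounded (up to orientation) by the closed loop $\sigma$, and that if no point has nonzero winding number with respect to $\sigma$ then this family cannot ``push'' any point out of the union of the two $\delta$-neighbourhoods mentioned in the statement. More precisely, I would first fix a point $z_0\notin \left(N_\delta(\gamma([s_1,s_2])+\gamma'(0))\right)\cup\left(N_\delta(\gamma([s_1,s_2])+\gamma'(1))\right)$ and track the winding number of the loop $t\mapsto \gamma(s)+\gamma'(t)$ (closed because $\gamma'$ is a path, not a loop — so here I should instead work with the full boundary loop $\sigma$ of the rectangle $[s_1,s_2]\times[0,1]$ and exploit that $z_0$ has index $0$ with respect to $\sigma$ by hypothesis).

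The key steps, in order. \emph{Step 1:} Decompose the boundary $\partial([s_1,s_2]\times[0,1])$ into the four edges; under the map $(s,t)\mapsto \gamma(s)+\gamma'(t)$ these become the four arcs $\gamma(s_1)+\gamma'([0,1])$, $\gamma([s_1,s_2])+\gamma'(1)$, $\gamma(s_2)+\gamma'([0,1])$ (reversed), $\gamma([s_1,s_2])+\gamma'(0)$ (reversed), concatenated into $\sigma$. \emph{Step 2:} Observe that if $z_0$ lies outside the two $\delta$-neighbourhoods, then $z_0$ is at distance $>\delta=\operatorname{diam}\gamma([s_1,s_2])$ from both ``horizontal'' arcs $\gamma([s_1,s_2])+\gamma'(0)$ and $\gamma([s_1,s_2])+\gamma'(1)$. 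This is where $\delta$ being the diameter is used: the two ``vertical'' arcs $\gamma(s_i)+\gamma'([0,1])$ are translates of each other by $\gamma(s_2)-\gamma(s_1)$, a vector of length $\le\delta$, so the straight-line homotopy $h_r = (1-r)\bigl(\gamma(s_1)+\gamma'(\cdot)\bigr)+r\bigl(\gamma(s_2)+\gamma'(\cdot)\bigr)$ between them stays within a $\delta$-neighbourhood of either. \emph{Step 3:} Compute the index of $z_0$ with respect to $\sigma$ two ways. On one hand it is $0$ by hypothesis. On the other hand, $\sigma$ is homotopic (rel nothing, as a free loop in $\mathbb C\setminus\{z_0\}$) — via the homotopy that slides the two horizontal arcs along themselves, legitimate precisely because $z_0$ stays $\delta$-far from them throughout — to the concatenation of $\gamma(s_1)+\gamma'([0,1])$ with the reverse of $\gamma(s_2)+\gamma'([0,1])$ (the two horizontal contributions cancelling in the appropriate sense). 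Hence the index of $z_0$ with respect to this two-arc loop is also $0$. \emph{Step 4:} Conclude: if $z_0$ belonged to exactly one of $\gamma(s_1)+\gamma'([0,1])$ or $\gamma(s_2)+\gamma'([0,1])$ but not the other, then a small perturbation of $z_0$ would change the count, i.e. the ``difference'' $1$-cycle $\bigl(\gamma(s_1)+\gamma'([0,1])\bigr)-\bigl(\gamma(s_2)+\gamma'([0,1])\bigr)$ would be a genuine cycle having $z_0$ in a component of nonzero index — contradicting Step 3. Therefore outside the two $\delta$-neighbourhoods the two arcs $\gamma(s_1)+\gamma'([0,1])$ and $\gamma(s_2)+\gamma'([0,1])$ must coincide as sets.

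The main obstacle I anticipate is making Step 4 rigorous: the two arcs $\gamma(s_1)+\gamma'([0,1])$ and $\gamma(s_2)+\gamma'([0,1])$ need not individually be closed curves, so ``index'' is not literally defined for them, and I must phrase the argument in terms of the single closed loop $\sigma$ and a careful homotopy/excision argument in $\mathbb C\setminus\{z_0\}$ rather than naive subtraction of winding numbers. Concretely, I would show that for $z_0$ outside both neighbourhoods the restriction of $(s,t)\mapsto\gamma(s)+\gamma'(t)$ to the rectangle can be composed with the retraction $\mathbb C\setminus\{z_0\}\to S^1$ and that, because the two horizontal edges map $\delta$-far from $z_0$, the whole boundary map is null-homotopic in $\mathbb C\setminus\{z_0\}$ iff the two vertical edges give the same arc; the diameter hypothesis is exactly what bridges ``$\delta$-far from the horizontal edges'' with ``the vertical edges differ by a vector of length $\le\delta$.'' A secondary nuisance is bookkeeping the orientations so that the horizontal contributions really cancel; I would handle this by always referring back to the single loop $\sigma$ with its induced boundary orientation, never to the arcs in isolation.
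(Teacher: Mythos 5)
Your Steps 1--3 are sound and can be made rigorous: since each horizontal arc $\gamma([s_1,s_2])+\gamma'(j)$, $j=0,1$, has diameter at most $\delta$, the straight-line homotopy replacing it by the chord between its endpoints stays inside the $\delta$-neighbourhood of that arc, so for any $z_0$ outside both neighbourhoods the index of $z_0$ with respect to $\sigma$ equals its index with respect to the loop made of $\gamma(s_1)+\gamma'$, a short chord, the reverse of $\gamma(s_2)+\gamma'$, and another short chord, and this is $0$ by hypothesis. (Your remark in Step 2 about the straight-line homotopy between the two \emph{vertical} arcs is not the relevant one, since $z_0$ may lie on a vertical arc; what you actually use in Step 3, sliding the horizontal edges, is the right move.) This reduction is fine, but it is only bookkeeping.

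The genuine gap is Step 4, which you flag yourself, and it is precisely the step that constitutes essentially the whole of the paper's proof: there one takes $z$ on one of the translates of $\gamma'([0,1])$, outside the two neighbourhoods and off the other translate, chooses a small ball around $z$ missing the other translate, and invokes the Jordan curve theorem to find two nearby points ``on different sides'' of the arc through $z$, which then have \emph{different} indices with respect to $\sigma$, so one of them has nonzero index. Your proposal contains no substitute for this local separation argument, and the patch you suggest -- that the boundary map is null-homotopic in $\BbC\setminus\{z_0\}$ if and only if the two vertical edges give the same arc -- is false as stated: if, for instance, $\gamma'$ traverses a segment and then retraces it, the loop $\sigma$ lies in a contractible union of segments, so \emph{every} point off $\sigma$ has index $0$, even though $\gamma(s_1)+\gamma'([0,1])$ and $\gamma(s_2)+\gamma'([0,1])$ are disjoint translates of one another. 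This shows that the index-zero hypothesis, manipulated by homotopies alone, cannot force the two arcs to coincide; one must use that near $z_0$ the only part of $\sigma$ is the arc through $z_0$ and that crossing this arc changes the index (the Jordan-curve-type input of the paper). Until you supply an argument of that kind, Step 4 -- and hence the proof -- is missing its key idea.
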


\begin{proof}
Assume the contrary and let $z$ be a point of the curve $\widetilde\gamma:=\gamma([s_1,s_2])+\gamma(t_1)$ that
lies outside the above neighbourhoods and that does not belong to the $\gamma([s_1,s_2])+\gamma(t_2)$.
By continuity, there is $\varepsilon$-neighbourhood of $z$ that the latter curve does not intersect.

Now, by the Jordan curve Theorem, in this neighbourhood one can find two points ``on different sides''
with respect to $\widetilde\gamma$.

These two points have thus different indices with respect to the loop~$\sigma$.
Hence, for at least one of them this index is non-zero.
\end{proof}

\begin{proof}[Proof of Theorem~\ref{thm:Victor}]
Let $\gamma$ and $\gamma'$ be two paths in $\BbC$ with
    $\gamma(0) = a$, $\gamma(1) = b$, $\gamma'(0) = c$ and $\gamma'(1) = d$.
Consider the loop
\[
\omega:=\{\gamma(s) + \gamma'(t) : (s,t)\in\partial([0,1] \times [0,1])\}.
\]
Any point not on $\omega$ that has non-zero index with respect to this loop is contained
    in $\gamma([0,1]) + \gamma'([0,1])$.
This yields a point in the interior of
    $\gamma([0,1]) + \gamma'([0,1])$.
Hence it suffices to show that there exists a point of non-zero index.

Let $\delta=\delta(s_1,s_2)$ be the diameter of $\gamma([s_1, s_2])$ for $s_1, s_2 \in [0,1]$.
Clearly, $\delta \to 0$ as $s_1 \to s_2$.
Pick $s_1$ and $s_2$ sufficiently close so the diameter of $\gamma'([0,1])$
    is greater than $2\delta$.
Hence there exists a point on the curve $\gamma(s_1) + \gamma'([0,1])$ that is neither in the
    $\delta$-neighbourhood of $\gamma([s_1, s_2]) + \gamma'(0)$ nor in the
    $\delta$-neighbourhood of $\gamma([s_1, s_2]) + \gamma'(1)$.
By Lemma~\ref{lem:Victor}, either there exists a point not on this curve of non-zero
    index, or $\gamma(s_1) + \gamma'([0,1])$ and $\gamma(s_2) + \gamma'([0,1])$
    coincide  outside the $\delta$-neighbourhoods of
    $\gamma([s_1, s_2]) + \gamma'(0)$ and $\gamma([s_1, s_2]) + \gamma'(1)$.

Taking $s_1 \to s_2$ and assuming that there is never a point of non-zero index gives
    that $\gamma'([0,1])$ admits an arbitrarily small translation symmetry outside
    its endpoints, and hence is a straight line.
Reversing the roles of $\gamma$ and $\gamma'$ gives that either there is a point of
    non-zero index, or $\gamma([0,1])$ is also a straight line.

If $\gamma$ and $\gamma'$ are both straight lines, then $\gamma+\gamma'$ is a
    parallelogram, and will only have empty interior if $\gamma$ and $\gamma'$ are
    parallel.
By assumption, $\gamma$ and $\gamma'$ are not parallel lines, and hence
    $\gamma + \gamma'$ contains a point in its interior.
\end{proof}

\section{Open questions}
\label{sec:conc}

\noindent
{\bf 1.} Let $d\ge3$ and let $M$ be a $d\times d$ real matrix whose eigenvalues are all less than~1 in modulus.
Denote by $A_M$ the attractor for the contracting self-affine iterated function system (IFS) $\{Mv-u, Mv+u\}$,
where $u$ is a cyclic vector.
The following result is proved in our most recent paper on the subject to date \cite{HS-multi}.

\begin{thm}
If
\[
|\det M|\ge 2^{-1/d},
\]
then the attractor $A_M$ has non-empty interior. In particular, this is the case
when each eigenvalue of $M$ is greater than $2^{-1/d^2}$ in modulus.
\end{thm}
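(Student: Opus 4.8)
The \emph{in particular} clause is immediate: if every eigenvalue $\lambda$ of $M$ satisfies $|\lambda|>2^{-1/d^{2}}$, then $|\det M|=\prod_{i=1}^{d}|\lambda_{i}|>\bigl(2^{-1/d^{2}}\bigr)^{d}=2^{-1/d}$, so the first assertion applies. One further preliminary is worth extracting: the hypothesis $|\det M|\ge 2^{-1/d}$ is rigid, forcing \emph{every} eigenvalue into the annulus $2^{-1/d}\le|\lambda|<1$ --- indeed $|\lambda_{i}|=|\det M|/\prod_{j\ne i}|\lambda_{j}|\ge|\det M|\ge 2^{-1/d}$ since each $|\lambda_{j}|<1$. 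In particular all $|\lambda_{i}|\ge\tfrac12$, which rules out the cheap obstruction: a real eigenvalue of modulus $<\tfrac12$ would make a one-dimensional projection of $A_{M}$ a Cantor set and hence $A_{M}$ of measure zero.

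The plan is to vectorise the two-dimensional complex argument of Section~\ref{sec:interior}. Since $\pi(a_{0}a_{1}a_{2}\dots)=\sum_{j\ge0}a_{j}M^{j}u$ with $a_{j}\in\{-1,1\}$, splitting the index $j$ by its residue modulo $d$ yields the Minkowski-sum decomposition
\[
A_{M}=\sum_{\ell=0}^{d-1}M^{\ell}A_{M^{d}},
\]
where $A_{M^{d}}$ is the attractor of $\{M^{d}v-u,\ M^{d}v+u\}$. This set lives in the $M^{d}$-invariant subspace $V=\mathrm{span}\{(M^{d})^{k}u:k\ge0\}$; a short Vandermonde computation shows that $M^{d}|_{V}$ has \emph{simple} spectrum, equal to the set of distinct values among the $\lambda_{i}^{d}$, each of modulus $|\lambda_{i}|^{d}\ge\tfrac12$. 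The central sub-goal is to prove that $A_{M^{d}}$ is connected, hence (Hahn--Mazurkiewicz) a Peano continuum: for this one wants a higher-dimensional analogue of \cite[Proposition~1]{BH}, to the effect that $\{Nv\pm u\}$ has connected attractor whenever $N$ is close enough to an isometry --- and the ``$\ge\tfrac12$'' threshold on the moduli of the eigenvalues of $N=M^{d}|_{V}$ is exactly what $|\det M|\ge 2^{-1/d}$ provides. Finally, cyclicity of $u$ for $M$ gives $V+MV+\dots+M^{d-1}V=\BbR^{d}$, so the continua $M^{\ell}A_{M^{d}}$ are jointly non-degenerate (one can even select, in each, a path affinely spanning $M^{\ell}V$); a $d$-fold generalisation of Kleptsyn's Theorem~\ref{thm:Victor} --- the Minkowski sum of $d$ Peano continua whose difference sets jointly span $\BbR^{d}$ has non-empty interior --- then exhibits a point interior to $\sum_{\ell}M^{\ell}A_{M^{d}}=A_{M}$.

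I expect the two new ingredients to be the main obstacles: (a) the connectedness criterion for $A_{M^{d}}$, since \cite{BH} and Theorem~\ref{thm:Victor} are two-dimensional and one must show that moduli $\ge\tfrac12$ of the possibly non-conformal eigenvalues of $M^{d}|_{V}$ really force the halves $M^{d}A_{M^{d}}\pm u$ to intersect; and (b) the $d$-dimensional Kleptsyn theorem itself, i.e.\ extending the index-theoretic argument of the Appendix from two planar paths to $d$ continua in $\BbR^{d}$, with the right joint non-degeneracy condition and a verification that $M^{\ell}A_{M^{d}}$ satisfy it (an induction on dimension, smearing one summand at a time, seems the natural route). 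An alternative that sidesteps a general Kleptsyn theorem is to establish $\mathrm{Leb}(A_{M})>0$ directly --- the $2^{n}$ level-$n$ cylinders have total volume $(2|\det M|)^{n}\ge 2^{(d-1)n/d}\to\infty$, so an $L^{2}$/transversality bound on overlap multiplicities should give positive measure --- and then upgrade positive measure to non-empty interior via the self-affine identity $A_{M}=T_{m}(A_{M})\cup T_{p}(A_{M})$ and a density-point argument; controlling the overlap multiplicities is the delicate step there.
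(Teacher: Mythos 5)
This theorem is not proved in the present paper at all: it is imported from \cite{HS-multi} in the open-questions section, so the only internal benchmark is the complex-case argument of Section~\ref{sec:interior}, which your plan vectorises. At the level of strategy your skeleton is sensible --- the ``in particular'' clause, the observation $|\lambda_i|\ge|\det M|\ge 2^{-1/d}$, and the Minkowski decomposition $A_M=\sum_{\ell=0}^{d-1}M^\ell A_{M^d}$ are all correct --- but what you have written is a programme, not a proof: the two steps you defer are exactly the mathematical content of the theorem, and neither is established nor even correctly formulated. For the connectedness of $A_{M^d}$: \cite[Proposition~1]{BH} is a statement about conformal planar maps; its natural generalisation is a determinant condition $|\det N|\ge\frac12$ rather than the eigenvalue-modulus condition you invoke, and you give no argument that either condition forces $T_m(A_{M^d})\cap T_p(A_{M^d})\neq\emptyset$ once $N=M^d|_V$ is non-conformal (connectedness loci in the real and Jordan-block cases are delicate; compare the Shmerkin--Solomyak bound quoted in Section~\ref{sec:interior}). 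For the Minkowski-sum step: the assertion that the sum of $d$ Peano continua in $\BbR^d$ with jointly spanning difference sets has non-empty interior is exactly a $d$-dimensional Kleptsyn theorem, which you do not prove; the index/Jordan-curve argument behind Theorem~\ref{thm:Victor} is intrinsically planar, and already in the plane the statement requires the parallel-lines caveat, so even the correct higher-dimensional hypothesis is unclear. Moreover, your structural claim that $M^d|_V$ has simple spectrum is false when $M$ has nontrivial Jordan blocks, and when $\lambda_i^d=\lambda_j^d$ for $i\neq j$ (e.g.\ eigenvalues $\pm\lambda$ with $d$ even) the space $V$ has dimension $<d$, so the summands $M^\ell A_{M^d}$ are individually degenerate --- precisely the situation this paper must treat separately in Subsection~\ref{sub:equal} --- and your ``choose a path affinely spanning $M^\ell V$'' set-up does not cover it.

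The fallback route does not repair this. Positive Lebesgue measure cannot be upgraded to non-empty interior via a density-point argument together with $A_M=T_m(A_M)\cup T_p(A_M)$: there exist self-similar sets of positive measure and empty interior (Cs\"ornyei, Jordan, Pollicott, Preiss and Solomyak), so that implication is unavailable in principle. Likewise, an $L^2$/transversality bound on overlap multiplicities is a tool for parametrised families, yielding almost-every-parameter conclusions, whereas the theorem asserts a conclusion for every $M$ with $|\det M|\ge 2^{-1/d}$; the volume count $(2|\det M|)^n\to\infty$ says nothing on its own, since controlling overlap multiplicity is exactly the difficulty and is not governed by a determinant hypothesis alone. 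In short, you have identified the right decomposition, in the spirit of \cite{HS-multi}, but both load-bearing steps of the argument are genuine gaps.
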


Clearly, this is generalisation of Theorem~\ref{thm:interior} to higher dimensions
(albeit with different constants). 

Is it true that $A_M$ contains no holes if all the eigenvalues are close enough to 1? 

\medskip\noindent
{\bf 2.} Is there a closed description of $\mathcal B:=\partial A$? In particular,
does $\mathcal B$ always have Hausdorff dimension greater than~1? The known examples
for the complex case involve $\ka$ which are Galois conjugates of certain Pisot numbers
(algebraic integers greater than~1 whose other conjugates are less than~1 in modulus) --
e.g. the Rauzy fractal for the tribonacci number or the fractal associated with the
smallest Pisot number \cite{AkSad} --  in which case one can generate the boundary via a
self-similar IFS.

\medskip
\noindent
{\bf 3.} Denote $\mathcal B_m=\partial T_m(A), \mathcal B_p=\partial T_p(A)$
and $\mathcal B_0=\mathcal B \cap \mathcal B_m\cap \mathcal B_p$.
If $z\in \mathcal B_0$, then clearly, $z\notin \mathcal U$.

\begin{figure}
\includegraphics[width=350pt]{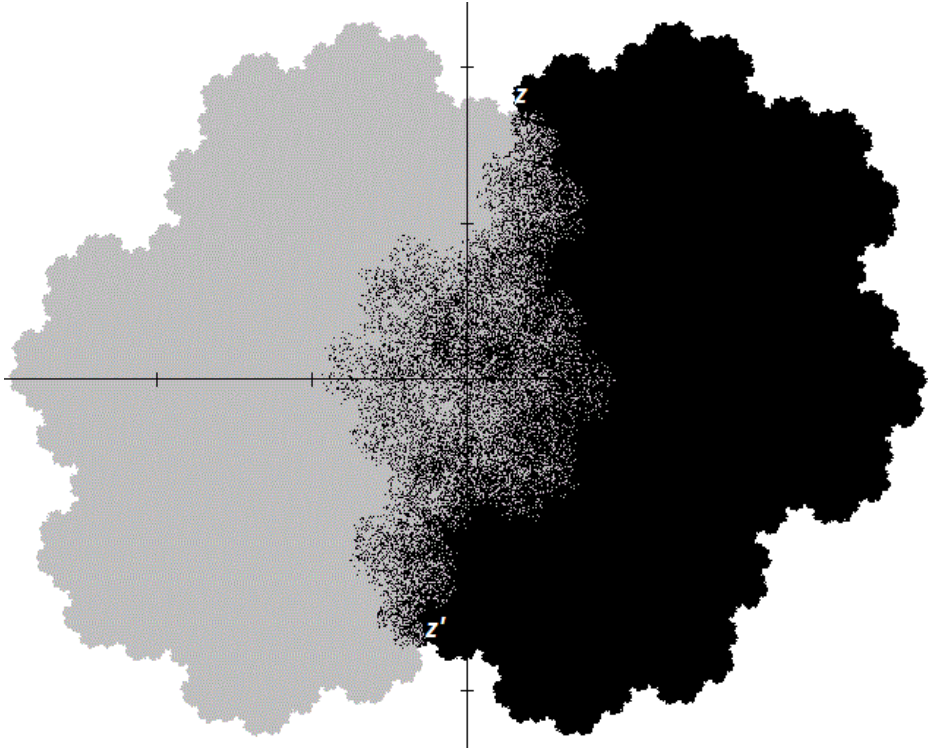}
\caption{The attractor $A_{0.5+0.58i}$. It appears that $z$ and $z'$
are the only points in $\mathcal B_0$. If this is indeed the case, then all except
a countable set of points of the boundary have a unique address.}
\label{fig:intersection}
\end{figure}

\begin{prop}
The set
\[
\mathcal B_0':=\bigcup_{\substack{n\ge0\\
(i_1,\dots,i_n)\in\{m,p\}^n}} T_{i_1}\dots T_{i_n}(\mathcal B_0)
\]
lies in $\mathcal B$. Moreover, $\mathcal B\setminus \mathcal B_0'\subset \mathcal U$.
\end{prop}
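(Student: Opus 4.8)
The plan is to prove the two assertions separately, after the trivial reduction: if $\mathrm{int}(A)=\emptyset$ then $\mathcal B=A$ and both statements are automatic, so assume $\mathrm{int}(A)\neq\emptyset$. Throughout I use that each $T_v$ (for a finite word $v$) is an affine homeomorphism of $\BbR^2$, hence sends $\mathrm{int}(X)$ onto $\mathrm{int}(T_vX)$ and $\partial X$ onto $\partial(T_vX)$, and that $T_v(A)\subseteq A$ forces $\mathrm{int}(T_v(A))\subseteq\mathrm{int}(A)$.

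For ``$\mathcal B\setminus\mathcal B_0'\subseteq\mathcal U$'' I would argue contrapositively. Let $p\in\mathcal B$ have two distinct addresses, let $\gamma$ be their longest common prefix, so the addresses read $\gamma m\dots$ and $\gamma p\dots$, and put $q=T_\gamma^{-1}(p)$. Then $q\in T_m(A)\cap T_p(A)$, since $q$ inherits an address starting with $m$ and one starting with $p$. I then check $q\in\mathcal B_0$: if $q\in\mathrm{int}(T_m(A))$ then $p=T_\gamma(q)\in\mathrm{int}(T_{\gamma m}(A))\subseteq\mathrm{int}(A)$, contradicting $p\in\mathcal B$, so $q\in\partial T_m(A)$, and symmetrically $q\in\partial T_p(A)$; and if $q\in\mathrm{int}(A)$ then $p=T_\gamma(q)\in\mathrm{int}(T_\gamma(A))\subseteq\mathrm{int}(A)$, again impossible, so $q\in\partial A$. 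Hence $q\in\mathcal B_0$ and $p=T_\gamma(q)\in\mathcal B_0'$. (Read the other way, this also shows every point of $\mathcal B_0'$ has at least two addresses, since each $z\in\mathcal B_0$ does.)

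For ``$\mathcal B_0'\subseteq\mathcal B$'' the content is that $T_w(z)\notin\mathrm{int}(A)$ for every $z\in\mathcal B_0$ and every word $w$ (it already lies in $A$). I would run a first-crossing argument: if $T_w(z)\in\mathrm{int}(A)$ for some $z\in\mathcal B_0$ and some $w=c_1\cdots c_n$ ($n\ge1$), then along the points $z,\,T_{c_n}(z),\,T_{c_{n-1}c_n}(z),\dots,T_w(z)$ — which begins in $\partial A$ and ends in $\mathrm{int}(A)$ — there is a first step leaving $\partial A$, producing a point $y=T_\eta(z)\in\mathcal B_0'\cap\mathcal B$ (for a suffix $\eta$ of $w$) and a symbol $c$ with $T_c(y)\in\mathrm{int}(A)$. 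So it suffices to rule out $y\in\mathcal B_0'\cap\mathcal B$ with $T_m(y)\in\mathrm{int}(A)$ or $T_p(y)\in\mathrm{int}(A)$. Since $y\in\partial A$ we get $T_c(y)\in\partial T_c(A)$, so $T_c(y)\in\mathrm{int}(A)$ forces $T_c(y)\in T_{\bar c}(A)$; using the translation identity $T_{\bar c}^{-1}T_c(v)=v\mp 2M^{-1}u$ this says $y$ and its translate by $s:=2M^{-1}u$ both lie in $A$. One then tries to combine this with the two-address structure of the $\mathcal B_0'$-point $y$ (together with the observation that, because $z\in\mathcal B_0$, the boundary $\partial A$ already contains a pair of points differing exactly by $s$, and with the nesting $T_w(z)\in\partial T_{wm}(A)\cap\partial T_{wp}(A)$) to reach a configuration already excluded, or to generate an infinite strictly descending chain, hence a contradiction.

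I expect this last step — excluding $y\in\mathcal B_0'\cap\mathcal B$ with $T_c(y)\in\mathrm{int}(A)$ — to be the main obstacle. The naive induction on $|w|$ fails because $\mathcal B_0$ is not forward-invariant under $\{T_m,T_p\}$, and ``$y\in\partial A$'' by itself does not prevent $T_c(y)$ from being swallowed by the complementary cylinder $T_{\bar c}(A)$ (this genuinely happens for $y\in\partial A\setminus\mathcal B_0'$ — e.g. along the ``seam'' of the twin dragon). The delicate point is to squeeze enough rigidity of $\partial A$ near $y$ out of the membership $y\in\mathcal B_0'$ (via the nested boundary relations and the translation identities above) to push $T_m(y)$ and $T_p(y)$ back onto $\partial A$. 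Should a clean descent prove elusive, the fallback is to first establish the disjoint decomposition $\partial A=\bigl(\partial T_m(A)\setminus T_p(A)\bigr)\sqcup\bigl(\partial T_p(A)\setminus T_m(A)\bigr)\sqcup\mathcal B_0$ and then track how $T_m$ and $T_p$ act on its three pieces.
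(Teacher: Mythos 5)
Your proof of the second inclusion, $\mathcal B\setminus\mathcal B_0'\subset\mathcal U$, is correct and complete, and it is essentially the paper's own argument in contrapositive form: the paper follows the forced first symbol of a boundary point and shifts until (if ever) the orbit lands in $\mathcal B_0$, while you take the longest common prefix $\gamma$ of two addresses of $p\in\mathcal B$ and verify that $T_\gamma^{-1}(p)\in\mathcal B_0$. These are the same mechanism, and your write-up is if anything tidier than the paper's sketch (the opening reduction to $\mathrm{int}(A)\neq\emptyset$ is superfluous, since this argument never uses it).

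The first inclusion, $\mathcal B_0'\subset\mathcal B$, you do not prove: after the (correct) first-crossing reduction you leave open exactly the step ``exclude $y\in\mathcal B_0'\cap\mathcal B$ with $T_c(y)\in\mathrm{int}(A)$'', so this half is a genuine gap in your proposal. You should know, however, that this is precisely where the paper itself is thin: its entire proof of the first claim is the assertion $T_i(\mathcal B)\subset\mathcal B$, i.e.\ the forward invariance of $\partial A$ which you correctly identify as false in general (for the twin dragon, $T_m(\partial A)=\partial T_m(A)$ contains the part of the seam interior to $A$). Moreover, the inclusion $\mathcal B_0'\subset\mathcal B$ can actually fail in the paper's own setting, so no descent argument could have closed your gap without extra hypotheses. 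Take the mixed real case with $\mu=\la\in(1/\sqrt2,1)$: after the change of variables $(x,y)\mapsto\bigl(\frac{x+y}2,\frac{y-x}2\bigr)$ the maps become $(u,v)\mapsto(c+\la v,\la u)$, $c=\pm1$, the attractor is the rectangle $[-U,U]\times[-\la U,\la U]$ with $U=(1-\la^2)^{-1}$, and $T_mA$, $T_pA$ are the overlapping vertical strips $[-U,\la^2U-1]\times[-\la U,\la U]$ and $[1-\la^2U,U]\times[-\la U,\la U]$. The point $(0,\la U)$ lies on the top edge of all three sets, hence belongs to $\mathcal B_0$, yet $T_m(0,\la U)=(\la^2U-1,0)$ is an interior point of the rectangle, so $T_m(\mathcal B_0)\not\subset\mathcal B$ there. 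The honest summary is therefore: part two of the Proposition is proved (by you, and by the paper modulo wording), while part one needs either additional assumptions (for instance that $T_m(A)\cap T_p(A)$ has empty interior, and even then a new argument) or should be weakened; note that the paper's intended application --- $\mathcal B_0$ countable implies all but countably many boundary points have a unique address --- uses only part two.
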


\begin{proof}Note first that $T_i(\mathcal B)\subset \mathcal B$ for $i\in\{m,p\}$, whence follows
the first claim. Now, suppose $z\in\mathcal B\setminus \mathcal B_p$, say. Then the first symbol of
any address of $z$ has to be $m$. Let us shift this address, which corresponds to applying
$T_m^{-1}$ to $z$ in the plane. If the resulting point is in $\mathcal B\setminus \mathcal B_p$
or $\mathcal B\setminus \mathcal B_m$, then the first symbol of its address is also unique, etc.
Hence follows the second claim.
\end{proof}

Thus, if we could somehow determine that the set $\mathcal B_0$ is ``small'' -- countable, say --
then ``almost every'' point of the boundary would be a point of uniqueness. See Figure~\ref{fig:intersection}
for an example.

\section*{Acknowledgements}

A significant part of this work has been done during the first author's stay
and the second author's short visit to Czech Technical University in Prague.
The authors are indebted to Edita Pelantov\'a and Zuzana Mas\'akov\'a for their hospitality.
The authors would also like to thank Victor Kleptsyn for providing a proof for
Theorem~\ref{thm:Victor}.

\end{document}